\newtheorem{lemma}{Lemma}[section]
\newtheorem{thm}[lemma]{Theorem}
\newtheorem{rem}[lemma]{Remark}
\newtheorem{prop}[lemma]{Proposition}
\newtheorem{cor}[lemma]{Corollary}
\newtheorem{defn}[lemma]{Definition}
\newcommand\matR{{\mathbb{R}}}
\newcommand\matZ{{\mathbb{Z}}}
\newcommand\matC{{\mathbb{C}}}
\newcommand\calT{{\mathcal{T}}}
\newcommand\calN{{\mathcal{N}}}
\newcommand\calNtil{\widetilde{{\mathcal{N}}}}
\newcommand\calA{{\mathcal{A}}}
\newcommand\calAtil{\widetilde{{\mathcal{A}}}}
\newcommand{\duerom}{I\!I}
\newcommand{\trerom}{\duerom\!I}
\newcommand{\zetadue}{\matZ/_{\!2\matZ}}
\newcommand{\zetatre}{\matZ/_{\!3\matZ}}
\renewcommand{\hbar}{{\overline{h}}}
\newfont{\Got}{eufm10 scaled 1200}
\newcommand{\permu}{{\hbox{\Got S}}}
\newcommand{\compo}{\,{\scriptstyle\circ}\,}
\newcommand{\mycap} [1] {\caption{\footnotesize{#1}}}
\begin{document}

\title{Spin structures on $3$-manifolds\\ via arbitrary triangulations}

\author{Riccardo \textsc{Benedetti}\and Carlo~\textsc{Petronio}}

\maketitle

\begin{abstract}
\noindent
Let $M$ be an oriented compact $3$-manifold and let $\calT$ be a (loose) triangulation of $M$,
with ideal vertices at the components of $\partial M$ and possibly internal vertices.
We show that any spin structure $s$ on $M$
can be encoded by extra combinatorial structures on $\calT$.
We then analyze how to change these extra structures on $\calT$, and $\calT$ itself, without
changing $s$, thereby getting a combinatorial realization, in the usual
``objects/moves'' sense, of the set of all pairs $(M,s)$. Our moves have a local nature,
except one, that has a global flavour but is explicitly described anyway.
We also provide an alternative approach where the global move is replaced by
simultaneous local ones.\\ \ \\
\noindent MSC (2010): 57R15 (primary);  57N10, 57M20 (secondary).
\end{abstract}

\noindent
Combinatorial
presentations of $3$-dimensional topological categories, such
as the description of closed oriented $3$-manifolds via surgery on framed links in $S^3$,
and many more, are among the main themes of geometric
topology, and in particular have proved crucial for the theory of quantum
invariants, initiated in~\cite{RT} and~\cite{TV}.

A combinatorial presentation of the set of all pairs $(M,s)$, with $M$ a closed oriented
$3$-manifold and $s$ a spin structure on $M$, was already contained in~\cite{LNM}.
This presentation was realized by selecting the (loose) triangulations of $M$
having only one vertex and supporting a $\Delta$-{\it complex}
structure (see~\cite{HATCHER}), also called a {\it branching}.
The viewpoint adopted in~\cite{LNM} was actually that of special spines, equivalent to that of
triangulations via duality (see Matveev~\cite{Matveev:AAM} and below).
For the special spine dual to a triangulation, a branching is precisely a structure of
\emph{oriented branched surface} (see Williams~\cite{Williams}), and this structure was used in~\cite{LNM}
to define a trivialization of the tangent bundle of $M$ along the $1$-skeleton
of the spine, whence a spin
structure on $M$, using constructions already proposed by Ishii~\cite{Ishii} and Christy~\cite{Christy}.

The construction just described easily extends to pairs $(M,s)$ with $M$ a
compact oriented $3$-manifold with non-empty boundary and $s$ a spin structure on $M$, using
\emph{branchable} triangulations of $M$ with \emph{ideal} vertices at the components of $\partial M$, and possibly
internal internal vertices. This approach however suffers from the drawback that not all triangulations
of $M$ are branchable: for instance, the canonical triangulation by two regular hyperbolic ideal tetrahedra of the
hyperbolic one-cusped manifold called the ``figure-eight-knot-sister'' is not branchable.
On one hand, one easily sees that any triangulation of $M$ has branchable subdivisions
(\emph{e.g.}, take a regular subdivision and define a branching
by choosing a total ordering of the vertices). On the other hand, in many
circumstances one is interested in sticking to a given triangulation of $M$, or to
consider the class of all vertex-efficient triangulations of $M$
(namely, the purely ideal triangulations for non-empty $\partial M$,
and the $1$-vertex triangulations for closed $M$).

Recently, generalized versions of the notion of branching (see the definitions below),
with the nice property of existing on {\it every} triangulation, have
emerged as useful devices to deal with simplicial formulas defined over
triangulations equipped with solutions of Thurston's $\text{PSL}(2,\matC)$
consistency equations (or variations of them~\cite{Luo12,Luo13}).
For instance, motivated by his work in progress on the entropy of
solutions of the homogeneous $\text{PSL}(2,\matR)$ Thurston equations,
Luo introduced the notion of $\zetadue$-taut structure on a triangulation,
and it turns out that a certain notion of \emph{weak branching},
widely employed below together with the underlying notion of
\emph{pre-branching}, easily allows to show that every triangulation admits
$\zetadue$-taut structures (see Remark~\ref{taut:rem}).
As another example, the same notions of weak and pre-branching
were exploited in~\cite{BBnew} to extend the construction of quantum hyperbolic invariants~\cite{BB2,BB3}
to an arbitrary hyperbolic one-cusped  manifold, over a canonical Zariski-open set
of the geometric component of its character variety.

In several instances Luo~\cite{Luo:conv} suggested that a combinatorial encoding
of spin structures based on arbitrary triangulations might be of use for
the construction of spin-refined invariants obtained from simplicial
formulas as those mentioned in the previous paragraph.
In this note we provide such a presentation, using the notion of
weak branching already alluded to.

The results established in this paper provide an ``objects/moves'' combinatorial presentation of the
set of all pairs $(M,s)$, with $M$ a compact oriented $3$-manifold
and $s$ a spin structure on $M$, in the following sense:
\begin{itemize}
\item Given any (loose) triangulation $\calT$ of $M$,
with ideal vertices at the components of $\partial M$ and
possibly internal vertices, and any $s$,
we encode $s$ by decorating $\calT$ with certain extra combinatorial structures;
\item We exhibit combinatorial moves on decorated triangulations relating to each other any
two that encode the same $(M,s)$.
\end{itemize}
We note that all our moves are explicitly described, but one of them has an intrinsically global nature.
On the other hand, in the second part of the
paper we will show that this move can actually be replaced, in a suitable sense,
by a combination of local ones. This last result is subtle and technically quite demanding,
it is based on some non-trivial algebraic constructions, and it unveils
unexpected coherence properties of the graphic calculus we use to encode weakly branched triangulations.

A first application of the technology developed in the present note appears in~\cite{BBnew},
where our results are used to solve a sign
indeterminacy in the phase anomaly of the quantum hyperbolic invariants
(see Remark~\ref{sign}).
We also note that adapting the arguments of~\cite[Chapter 8]{LNM}, the
results of this article can be used to provide an effective construction of
the Roberts spin-refined Turaev-Viro invariants~\cite{Roberts},
and of the related Blanchet spin-refined
Reshetikhin-Turaev invariants~\cite{blanchet} of the double of a manifold.

\section{Statements for triangulations}\label{tria:sec}
In this section we state some results that
provide in terms of arbitrary \emph{triangulations} a combinatorial encoding of spin structures
on oriented $3$-manifolds. The geometric construction underlying this encoding actually
employs certain objects called \emph{special spines}, and will be fully described in
Sections~\ref{spines:sec} and~\ref{moves:sec}. As a matter of fact, triangulations and special spines are equivalent to
each other via duality, but perhaps the majority of topologists is more familiar with
the language of triangulations, which is why we are anticipating our statements in this section.

\subsection{Triangulations, pre-branchings and weak branchings}
In this note $M$ will always be a connected, compact and oriented $3$-manifold, with or without
boundary. We also assume that $\partial M$ has no $S^2$ component (otherwise we canonically cap it with $D^3$).
We begin with several definitions. A \emph{triangulation} of $M$ is the datum $\calT$ of
\begin{itemize}
\item a finite number of oriented abstract tetrahedra, and
\item an orientation-reversing simplicial pairing of the $2$-faces of these tetrahedra
\end{itemize}
such that the space obtained by first gluing the tetrahedra along the pairings and then
removing open stars of the vertices is orientation-preservingly
homeomorphic to $M$ with some punctures (open balls removed).
Any number of punctures, including zero, is allowed (but a closed $M$
must be punctured at leats once).

A \emph{branching} on
an abstract oriented tetrahedron $\Delta$ is an orientation of
its edges such that no face of $\Delta$ is a cycle. Equivalently,
one vertex of $\Delta$ should be a source and one should be a sink, as illustrated in Fig.~\ref{tetraindicesduals:fig}-left.
Note that the figure shows the only two possible branched tetrahedra up to oriented isomorphism.
They are characterized by an \emph{index} $\pm1$, to define which one denotes by $v_j$ the vertex
of $\Delta$ towards which $j$ edges of $\Delta$ point, and one checks whether the ordering
$(v_0,v_1,v_2,v_3)$ defines the orientation of $\Delta$ or not.
\begin{figure}
    \begin{center}
    \includegraphics[scale=.6]{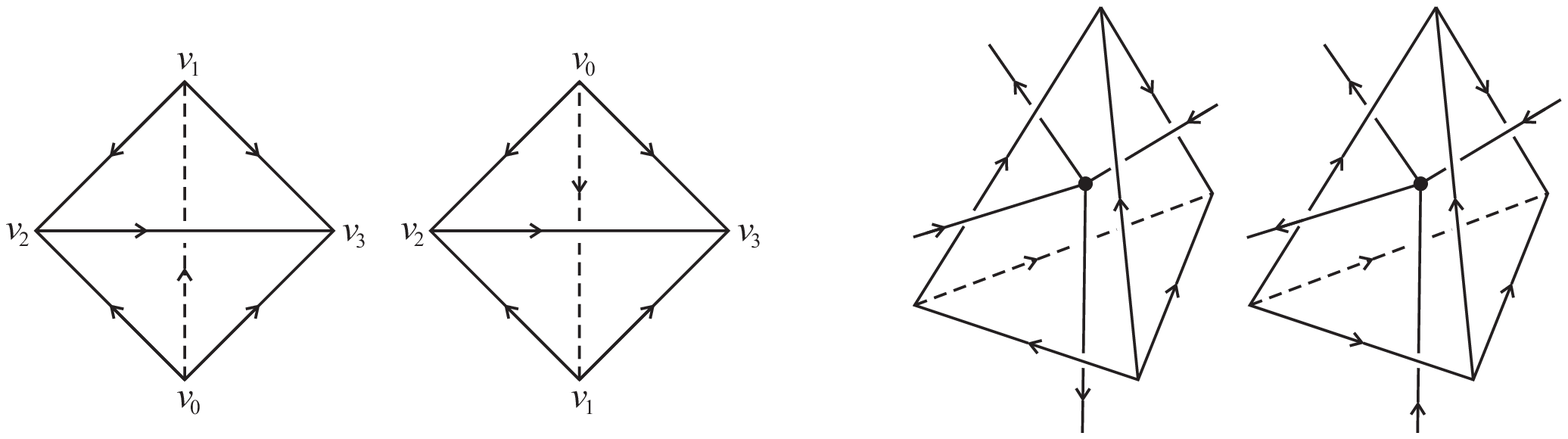}
    \end{center}
\vspace{-.5cm}
\mycap{Left: a branched tetrahedron of index $+1$ and one of index $-1$.
Right: a weak branching compatible with a pre-branching.
\label{tetraindicesduals:fig}}
\end{figure}
Each face of a branched abstract tetrahedron is endowed with the prevailing
orientation induced by its edges.

A \emph{pre-branching} on a triangulation $\calT$ is an orientation $\omega$ of
the edges of the gluing graph $\Gamma$ of $\calT$ (a $4$-valent graph) such at each vertex
two edges are incoming and two are outgoing. Given such an $\omega$, a \emph{weak branching}
$b$ compatible with $\omega$ is the choice of an abstract branching for each tetrahedron in
$\calT$ such that $\Gamma$ with its orientation $\omega$ is positively transversal
to each face of each tetrahedron in $\calT$, as in Fig.~\ref{tetraindicesduals:fig}-right.
Note that for such a $b$ when two faces are glued in $\calT$ either all three edge orientations are matched
or only one is, and in both the glued faces it is one of the prevailing two, as in Fig.~\ref{facegluing:fig}
\begin{figure}
    \begin{center}
    \includegraphics[scale=.67]{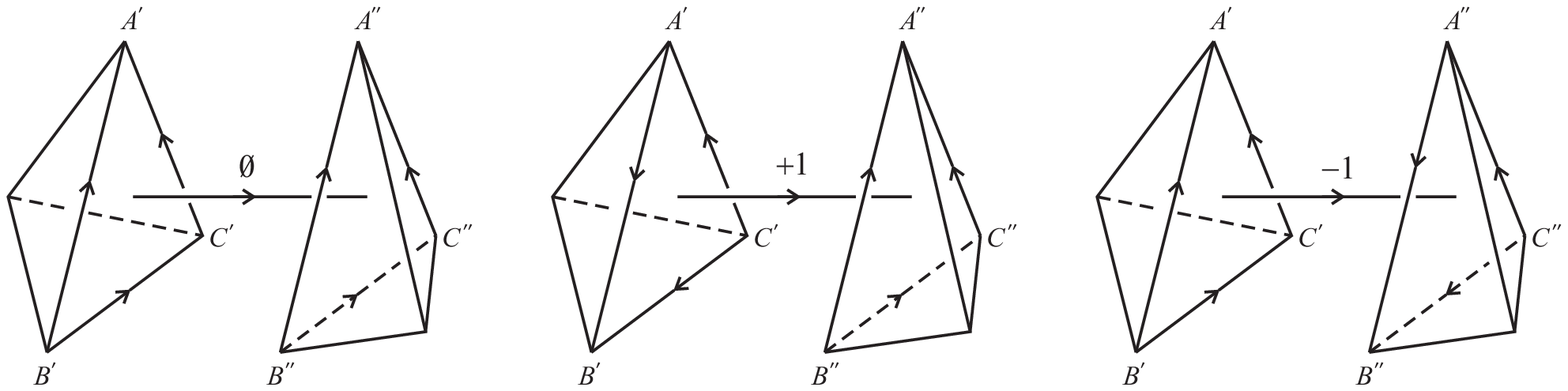}
    \end{center}
\vspace{-.5cm}\mycap{The three types of face-pairing in a weakly branched triangulation.\label{facegluing:fig}}
\end{figure}
(the labels $\emptyset,+1,-1$ are used below).

\subsection{Spin structure from a weak branching and a 1-chain}
All the constructions and results of the rest of this section will be explained and proved
in Sections~\ref{spines:sec} and~\ref{moves:sec} in the dual context of special spines.
Let a triangulation $\calT$ with pre-branching $\omega$ and compatible weak branching
$b$ be given. We will now define a chain $\overline{\alpha}(P,\omega,b)=\sum_e\alpha(e)\cdot e\in C_1\left(\calT;\zetadue\right)$, where
$e$ runs over all edges of $e$. The value of $\alpha(e)$ is
the sum of a fixed initial contribution $1$ plus
certain contributions of two different types; both contribution types are computed
in the group $G=\left(\frac12\cdot\matZ\right)/_{\!2\matZ}$, but for each of them the sum
is in $\zetadue$; here comes the description of the two types:
\begin{itemize}
\item Endow $e$ with an arbitrary orientation and in the abstract tetrahedra of $\calT$ consider the collection of
all the edges projecting to $e$ and of type $v_0v_2$ or $v_1v_3$; for each such abstract edge $\widetilde e$
take a contribution $+\frac12$ or $-\frac12$ depending on whether the projection from $\widetilde e$ to $e$
preserves or reverses the orientation;
\item Consider all the face-gluings as in Fig.~\ref{facegluing:fig} in which $e$ is involved
(with multiplicity) and take a contribution depending as follows
on the type $t$ of the gluing and on the position of $e$ within it:
\begin{itemize}
\item[$\triangleright$] $0$ if $t=\emptyset$, regardless of the position of $e$;
\item[$\triangleright$] $1$ if $t=\pm1$ and the orientation of $e$ is matched by the gluing;
\item[$\triangleright$] $\mp\frac12$ if $t=\pm1$ and the orientation of $e$ is not matched by the gluing.
\end{itemize}
\end{itemize}

\begin{prop}\label{alpha:tria:prop}
$\overline{\alpha}(P,\omega,b)$ is a coboundary, and to every $\overline{\beta}\in C_2\left(\calT;\zetadue\right)$
such that $\partial\overline{\beta}=\overline{\alpha}(P,\omega,b)$ there corresponds a spin
structure $s\left(\calT,\omega,b,\overline{\beta}\right)$ on $M$. Moreover
$s\left(\calT,\omega,b,\overline{\beta}_0\right)=s\left(\calT,\omega,b,\overline{\beta}_1\right)$ if and only if
$\overline{\beta}_0+\overline{\beta}_1$ is $0$ in $H_2\left(\calT;\zetadue\right)$.
\end{prop}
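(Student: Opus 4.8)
The plan is to prove all three assertions by transporting them, through the duality between $\calT$ and its dual special spine $P$, into the standard first-obstruction theory for trivializations of $TM$, and then invoking the fact that an oriented $3$-manifold is parallelizable. Throughout I identify, under the dual cell correspondence (edges of $\calT$ $\leftrightarrow$ regions of $P$, faces of $\calT$ $\leftrightarrow$ edges of $P$), the groups $C_1(\calT;\zetadue)$ and $C_2(\calT;\zetadue)$ with the cochain groups $C^2(P;\zetadue)$ and $C^1(P;\zetadue)$, so that the simplicial boundary $\partial$ becomes the cellular coboundary $\delta$ and, on homology, $H_2(\calT;\zetadue)\cong H^1(P;\zetadue)=H^1(M;\zetadue)$. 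The weak branching $b$ (with $\omega$) produces, by the construction of Ishii and Christy recalled in the introduction and detailed in Sections~\ref{spines:sec}--\ref{moves:sec}, a trivialization $\phi_b$ of $TM$ over the $1$-skeleton $P^{(1)}$. The first step, and the genuine technical heart, is to verify that the explicit recipe defining $\alpha(e)$ computes exactly, region by region of $P$, the $\pi_1(\mathrm{SO}(3))=\zetadue$ obstruction to extending $\phi_b$ across that region; granting this, $\overline{\alpha}(P,\omega,b)$ is identified with the obstruction cochain $o(\phi_b)\in C^2(P;\zetadue)$.

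Given this identification the first assertion is immediate: $o(\phi_b)$ represents $w_2(TM|_P)=w_2(TM)|_P$, which vanishes because $M$, being oriented and $3$-dimensional, is parallelizable and in particular spin. Since $P$ is $2$-dimensional we have $C^2(P;\zetadue)=Z^2(P;\zetadue)$, so $[o(\phi_b)]=0$ in $H^2(P;\zetadue)$ forces $o(\phi_b)=\delta c$ for some $c\in C^1(P;\zetadue)$; dually $\overline{\alpha}(P,\omega,b)=\partial\overline{\beta}$, i.e.\ it is a coboundary in the asserted sense, and such $\overline{\beta}$ exist.

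For the second assertion I would use the standard additivity of the obstruction under the torsor action of $C^1(P;\zetadue)$ on framings over $P^{(1)}$: modifying $\phi_b$ by $c$ changes the obstruction by $\delta c$, so $o(\phi_b+c)=o(\phi_b)+\delta c$. Hence if $\partial\overline{\beta}=\overline{\alpha}$, i.e.\ $\delta c=o(\phi_b)$, then $o(\phi_b+c)=0$ in $\zetadue$ and $\phi_b+c$ extends to a trivialization of $TM$ over all of $P$, equivalently over $M$ via a deformation retraction $M\searrow P$ and $TM\cong r^*(TM|_P)$. This trivialization defines $s(\calT,\omega,b,\overline{\beta})$. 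It is well defined because $\pi_2(\mathrm{SO}(3))=0$ makes the extension over each region unique up to homotopy rel $P^{(1)}$, so the induced spin structure depends only on $\phi_b+c$ over $P^{(1)}$, hence only on $\overline{\beta}$.

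Finally, for the equivalence I would compare two choices $\overline{\beta}_0,\overline{\beta}_1$, giving framings $\phi_i=\phi_b+c_i$ over $P^{(1)}$. Their difference $c_0+c_1$ (in $\zetadue$) satisfies $\delta(c_0+c_1)=o(\phi_b)+o(\phi_b)=0$, so it is a cocycle whose class in $H^1(P;\zetadue)$ measures the discrepancy between the two resulting trivializations of $TM|_P$. By the classification of $\Spin$ structures as homotopy classes of such trivializations over the $2$-skeleton, $\phi_0$ and $\phi_1$ yield the same spin structure exactly when this class vanishes, i.e.\ when $[c_0+c_1]=0$ in $H^1(M;\zetadue)$; under the duality this reads $\overline{\beta}_0+\overline{\beta}_1=0$ in $H_2(\calT;\zetadue)$, as claimed. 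The only nonroutine input is the cochain-level computation of Step~1; the remaining arguments are the usual first-obstruction bookkeeping, which I expect Sections~\ref{spines:sec}--\ref{moves:sec} to carry out explicitly in the spine language.
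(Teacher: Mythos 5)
Your proposal is correct and takes essentially the same route as the paper: the paper proves this statement in its dual spine form (Proposition~\ref{coboundary:use:prop}) by exactly the difference-cochain/obstruction bookkeeping you describe, with the identification of the combinatorial recipe for $\overline{\alpha}$ with the obstruction cochain split off as a separate result (Proposition~\ref{real:obstruction:computation:prop}, your ``Step 1''). The only cosmetic difference is in the first assertion, where the paper starts from an existing spin structure on $M$ and exhibits its difference cochain against $\varphi(P,\omega,b)$ directly, rather than quoting $[o(\phi_b)]=w_2(TM)|_P=0$; both versions rest on the same fact that an oriented $3$-manifold is parallelizable, hence spin.
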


\begin{rem}\label{taut:rem}
\emph{Let $b$ be a weak branching compatible with a pre-branching $\omega$ on a triangulation $\calT$ of a manifold $M$.
If in each abstract tetrahedron of $\calT$ we choose the pair of opposite edges of types
$v_0v_2$ and $v_1v_3$ with respect to $b$, then the choice actually depends on $\omega$ only, not on $b$.
Moreover one sees that for all edges $e$ of $\calT$ in $M$ there is always an even number of abstract edges of types $v_0v_2$ or $v_1v_3$
projecting to $e$
(this corresponds to the fact that the contributions to $\overline{\alpha}(P,\omega,b)$ of the first type described
above are in $\zetadue$, and it is established in Proposition~\ref{real:obstruction:computation:prop} below).
It follows that, giving sign $-1$ to all the abstract edges $v_0v_2$ and $v_1v_3$, and sign $+1$ to the other edges,
we get a $\zetadue$-taut structure on $\calT$, as mentioned in the introduction.}
\end{rem}

\subsection{Triangulation moves preserving the spin structure}

The next results provide the combinatorial encoding of spin structures
announced in the title of the paper. From now on all chains $\overline{\beta}\in C_2\left(\calT;\zetadue\right)$
will be viewed up to 2-boundaries, without explicit mention.

\begin{prop}\label{vert:moves:tria:prop}
$s\left(\calT,\omega,b_0,\overline{\beta}_0\right)=s\left(\calT,\omega,b_1,\overline{\beta}_1\right)$ if and only if
$(b_0,\overline{\beta}_0)$ and $(b_1,\overline{\beta}_1)$ are related by the moves of
Fig.~\ref{tetravertmoves:fig}
\begin{figure}
    \begin{center}
    \includegraphics[scale=.6]{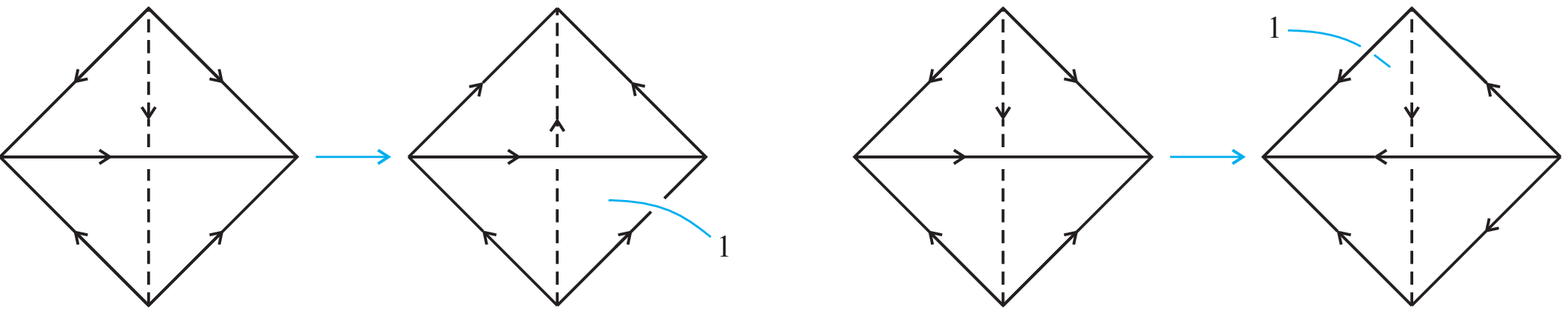}
    \end{center}
\vspace{-.5cm}\mycap{Moves preserving the pre-branching and the associated spin structure.
In both moves the ``$1$'' means that $1$ must be added to the coefficient in
$\overline{\beta}$ of the triangle to which ``$1$'' is attached; note that in both moves
it is the only one whose three edges all retain their orientation under the move.\label{tetravertmoves:fig}}
\end{figure}
(and their compositions and inverses).
\end{prop}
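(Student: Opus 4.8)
\emph{Proof proposal.}
The plan is to prove both implications at once by showing that $(b,\overline{\beta})\mapsto s\left(\calT,\omega,b,\overline{\beta}\right)$ induces a well-defined bijection from the set of $\omega$-compatible pairs, taken modulo the moves of Fig.~\ref{tetravertmoves:fig}, onto the spin structures of $M$. In this language the ``if'' part is exactly the assertion that the moves preserve the spin structure, i.e.\ that the map is well-defined on move-classes; the ``only if'' part will then be a formal consequence of this well-definedness, of the transitivity of the moves on weak branchings, and of the injectivity contained in Proposition~\ref{alpha:tria:prop}: for a fixed $b$, two admissible chains give the same spin structure exactly when they coincide modulo $2$-boundaries (their sum is a $2$-cycle, and vanishing in $H_2\left(\calT;\zetadue\right)$ means being a boundary).

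First I would classify, for a fixed $\omega$, the weak branchings compatible with it on a single abstract tetrahedron. A branching is a linear order $v_0<v_1<v_2<v_3$, and $\omega$ prescribes the prevailing orientation of each of the four faces. One checks directly that the cyclic shift $(v_0,v_1,v_2,v_3)\mapsto(v_3,v_0,v_1,v_2)$ --- equivalently, reversing every edge at the sink so that it becomes the source --- leaves all four prevailing orientations unchanged, whereas reversing the edges at an intermediate vertex creates a facial cycle and is forbidden. Hence the compatible branchings form a single $\matZ/4\matZ$-orbit of size four, and the two moves of Fig.~\ref{tetravertmoves:fig} (flip at the sink, flip at the source) are precisely this generator and its inverse, the retained triangle being in each case the face opposite the flipped vertex. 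Performing them independently at the various tetrahedra therefore acts transitively on the weak branchings compatible with $\omega$.

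Next I would record the chain-level identity making the moves admissible: if a move carries $b$ to $b'$ with retained face $f$, then $\overline{\alpha}(P,\omega,b')=\overline{\alpha}(P,\omega,b)+\partial f$ in $C_1\left(\calT;\zetadue\right)$, so that adding $f$ to $\overline{\beta}$ preserves the relation $\partial\overline{\beta}=\overline{\alpha}$. This is a finite inspection of the definition of $\overline{\alpha}$: the flipped distinguished edge of type $v_0v_2$/$v_1v_3$ reverses its first-type contribution, while the three face-gluings incident to the flipped vertex (with the labels $\emptyset,\pm1$ of Fig.~\ref{facegluing:fig}) alter their second-type contributions, the two effects combining to leave exactly $\partial f$. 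The genuine content of the ``if'' direction is, however, that the move preserves the spin structure \emph{itself}, not merely this relation; since Proposition~\ref{alpha:tria:prop} compares different $\overline{\beta}$ only for a fixed $b$, this cannot be argued formally and is the main obstacle. It is here that the dual special-spine picture of Sections~\ref{spines:sec}--\ref{moves:sec} enters: the branching trivializes $TM$ over the $1$-skeleton of the spine, the move is a local re-choice of this trivialization near the single vertex dual to the affected tetrahedron, and the prescribed change of $\overline{\beta}$ by $f$ is exactly the $\zetadue=\pi_1\left(SO(3)\right)$ correction recording the resulting loop, so that the two framings are homotopic and the induced spin structures agree. As a consistency check, iterating a move four times restores $b$ and adds $\partial\Delta$ (the sum of the four faces, a $2$-boundary) to $\overline{\beta}$, which by Proposition~\ref{alpha:tria:prop} acts trivially.

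Granting these steps, the ``only if'' direction is immediate. If $s\left(\calT,\omega,b_0,\overline{\beta}_0\right)=s\left(\calT,\omega,b_1,\overline{\beta}_1\right)$, I use transitivity to carry $(b_0,\overline{\beta}_0)$ by moves to a pair $(b_1,\overline{\beta}_0+\gamma)$ with the same branching $b_1$, where $\gamma$ is the accumulated sum of retained faces; by the chain-level identity $\partial(\overline{\beta}_0+\gamma)=\overline{\alpha}(P,\omega,b_1)$, and by the ``if'' direction this pair carries the same spin structure, hence that of $(b_1,\overline{\beta}_1)$. Proposition~\ref{alpha:tria:prop} then forces $\overline{\beta}_0+\gamma=\overline{\beta}_1$ modulo $2$-boundaries, so the two coincide as decorated triangulations and $(b_0,\overline{\beta}_0)$ is related to $(b_1,\overline{\beta}_1)$ by the moves of Fig.~\ref{tetravertmoves:fig}.
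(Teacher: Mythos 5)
Your combinatorial skeleton is sound, and it in fact mirrors the structure of the paper's own proof (given in dual form as Proposition~\ref{vertex:moves:real:prop}): the paper likewise classifies the changes of weak branching compatible with a fixed pre-branching at a single tetrahedron (four compatible choices, hence three possible changes, generated by the two moves and their compositions --- your $\matZ/4\matZ$-orbit under the sink-flip is the same statement), and it likewise obtains the ``only if'' direction formally from transitivity of the moves together with the rigidity part of Proposition~\ref{alpha:tria:prop}. Your chain-level identity $\overline{\alpha}(P,\omega,b')=\overline{\alpha}(P,\omega,b)+\partial f$ is also correct; it appears in the paper, in dual form $\Delta\alpha=\delta\widehat{e}$, as Remark~\ref{vertex:obstruction:rem}.

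The genuine gap is precisely the step you yourself single out as ``the main obstacle'': that the move preserves the spin structure itself. Your resolution --- that the move is a local re-choice of the trivialization and that the change of $\overline{\beta}$ by $f$ ``is exactly the $\pi_1$ correction recording the resulting loop, so that the two framings are homotopic'' --- is not an argument but a restatement of the claim to be proved. The frames before and after the flip are built from genuinely different branchings: the three faces incident to the flipped vertex change gluing type ($\emptyset\leftrightarrow\pm1$ in Fig.~\ref{facegluing:fig}), so along the dual edges the fields $\nu$ and $\mu_0$ are extended by different rules (Fig.~\ref{numubranchededge:fig} versus Fig.~\ref{numuextension:fig}) and acquire or lose the compensating full twists of Fig.~\ref{correction:fig}, while the weight $1$ on the retained face adds one further twist. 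Whether the total discrepancy is null-homotopic in $\pi_1(\textrm{GL}^{+}(3;\matR))$ is exactly the question, and it cannot be settled by bookkeeping alone. The paper must: (i) invoke Proposition~\ref{numuadditivity:prop} to reduce to a local comparison; (ii) observe that the four full twists (three coming from the new colours, one from the weight) are jointly induced by a homotopy supported near the vertex, so that it suffices to compare the uncorrected frames $(\nu,\mu_0)$; and (iii) verify, region by region on the boundary of each of the six regions of the dual spine touched by the move, that $(\nu,\mu_0)$ before and after agree up to homotopy --- one of these regions (region $A$ of Fig.~\ref{vertexproofnew:fig}) requiring the genuinely delicate analysis of Fig.~\ref{vertexproofframehard:fig}. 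No substitute for this mechanism appears in your proposal, and since your ``only if'' argument is routed through the ``if'' direction, neither implication of the proposition is established without it.
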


\begin{prop}\label{vert:circuit:prop}
$s\left(\calT,\omega_0,b_0,\overline{\beta}_0\right)=s\left(\calT,\omega_1,b_1,\overline{\beta}_1\right)$ if and only if
$(\omega_0,b_0,\overline{\beta}_0)$ and $(\omega_1,b_1,\overline{\beta}_1)$ are related by the moves of
Proposition~\ref{vert:moves:tria:prop} and additional moves
$\left(\calT,\omega,b,\overline{\beta}\right)\mapsto\left(\calT,\omega',b',\overline{\beta}'\right)$ described as follows:
\begin{itemize}
\item In the gluing graph of $\calT$ (which is oriented by $\omega$)
take an oriented simple circuit
$\gamma$ such that, for each tetrahedron it visits, the two faces it visits share the edge
$v_2v_3$ with respect to $b$, as in Fig.~\ref{tetracircuit:fig};
\begin{figure}
    \begin{center}
    \includegraphics[scale=.6]{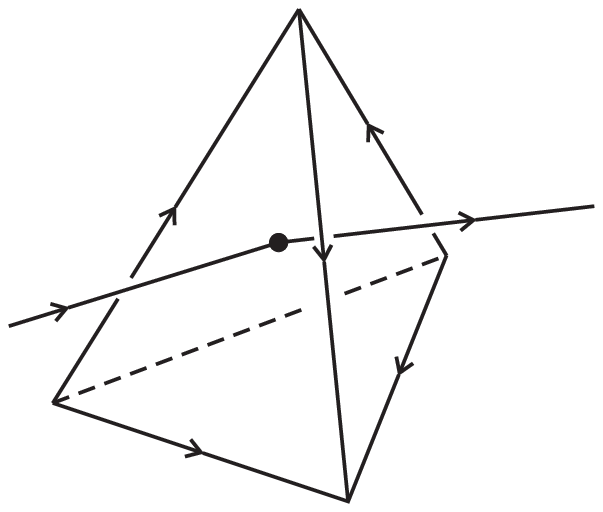}
    \end{center}
\vspace{-.5cm}\mycap{A circuit $\gamma$ in the gluing graph that in each tetrahedron visits faces sharing the edge $v_2v_3$.
The gluing encoded by an edge of $\gamma$ need not match edges of type $v_2v_3$ to each other.
\label{tetracircuit:fig}}
\end{figure}
\item Define $\omega'$ by reversing $\gamma$, define $b'$ by
reversing each edge $v_2v_3$ in each tetrahedron visited by $\gamma$,
and define $\overline{\beta}'$ by adding $1$ to the coefficient of each face of $\calT$
visited by $\gamma$ and incident to tetrahedra of distinct indices.
\end{itemize}
\end{prop}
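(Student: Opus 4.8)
The plan is to prove both implications by isolating the only genuinely new phenomenon with respect to Proposition~\ref{vert:moves:tria:prop}, namely the change of the pre-branching $\omega$. For the ``if'' part it suffices to check that a single circuit move preserves $s$; since the vertex moves of Proposition~\ref{vert:moves:tria:prop} already do, any composition of the two families then preserves $s$. For the ``only if'' part the strategy is to realize an arbitrary change $\omega_0\rightsquigarrow\omega_1$ as a finite sequence of circuit moves and then to close up, at the common final pre-branching, by the vertex moves of Proposition~\ref{vert:moves:tria:prop}; the latter also absorbs any residual discrepancy in $(b,\overline{\beta})$, because at a fixed pre-branching two triples with the same $s$ are related by those moves.

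The first step is graph-theoretic. Two pre-branchings $\omega_0,\omega_1$ are both $2$-in/$2$-out orientations of the $4$-valent gluing graph $\Gamma$, so the set of edges on which they disagree is balanced at every vertex: the number of arrows switching from incoming to outgoing equals the number switching the other way. Oriented by $\omega_0$, this symmetric difference is therefore an Eulerian subgraph and splits into a disjoint union of oriented simple circuits; reversing them one at a time carries $\omega_0$ to $\omega_1$ through intermediate pre-branchings (each of which admits compatible weak branchings, since compatibility is a purely local per-tetrahedron condition). Thus it suffices to reverse a single oriented simple circuit $\gamma$. Such a $\gamma$ need not meet the hypothesis of the move, so the second step is a \emph{local normalization}: keeping $\omega$ fixed, I would use the vertex moves to modify $b$ at each tetrahedron met by $\gamma$ until the entering and exiting faces there become the pair $v_0v_2v_3$, $v_1v_2v_3$ sharing $v_2v_3$. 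This is a finite check, the compatible branchings for a fixed local pre-branching forming a small explicit set; since the normalizations act in disjoint tetrahedra they commute, and by Proposition~\ref{vert:moves:tria:prop} they change $(b,\overline{\beta})$ without changing $s$.

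With $\gamma$ in normal form, the move sets $\omega'$ by reversing $\gamma$, sets $b'$ by reversing each edge $v_2v_3$ along $\gamma$, and sets $\overline{\beta}'=\overline{\beta}+\sum_f f$, the sum over faces of $\gamma$ joining tetrahedra of distinct index. One first verifies locally that $(\omega',b')$ is again a pre-branching with compatible weak branching: reversing $v_2v_3$ turns the branching with source $v_0$ and sink $v_3$ into one with source $v_0$ and sink $v_2$, flips the index, and flips the co-orientation of exactly the two faces $v_0v_2v_3$, $v_1v_2v_3$ that $\gamma$ uses, consistently with reversing $\gamma$. Then I would compute, tetrahedron by tetrahedron along $\gamma$, the difference $\overline{\alpha}(P,\omega',b')-\overline{\alpha}(P,\omega,b)$: the type-$1$ contributions change because the distinguished pair of opposite edges passes from $\{v_0v_2,\,v_1v_3\}$ to $\{v_0v_3,\,v_1v_2\}$, while the type-$2$ contributions change because the face-pairing labels and the indices are altered. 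The claim to verify is that this difference telescopes to $\partial\left(\sum_f f\right)$, so that $\overline{\beta}'$ again solves $\partial\overline{\beta}'=\overline{\alpha}(P,\omega',b')$ and, by Proposition~\ref{alpha:tria:prop}, the two spin structures are at least defined and agree modulo $H_2\!\left(\calT;\zetadue\right)$.

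The hard part will be to upgrade this boundary computation to an honest equality of spin structures, not merely an equality up to the ambiguity measured by $H_2\!\left(\calT;\zetadue\right)$; this is exactly where the global flavour of the circuit move enters. Changing $(\omega,b)$ alters the trivialization of $TM$ along the $1$-skeleton used to define $s$, and one must show that, around the \emph{whole} circuit $\gamma$, the reversal of the $v_2v_3$ edges together with the distinct-index correction to $\overline{\beta}$ yields a framing homotopic, not just cohomologous, to the original. I expect this to be the genuine obstacle, and the reason the authors carry it out in the dual language of special spines in Section~\ref{moves:sec}: there the reversal of $\gamma$ becomes a controlled homotopy of the vector field near the corresponding loop of the spine, and the $\overline{\beta}$-correction records precisely the contribution picked up at the faces between tetrahedra of distinct index. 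Once this geometric lemma is available the ``if'' direction is complete, and the ``only if'' direction follows by first equalizing the pre-branchings through the circuit moves constructed above and then invoking Proposition~\ref{vert:moves:tria:prop} at the common pre-branching.
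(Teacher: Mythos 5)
Your outline reproduces, almost step by step, the paper's own proof of the dual statement (Proposition~\ref{circuit:move:prop}): the same decomposition of the locus where $\omega_0$ and $\omega_1$ disagree into disjoint simple circuits oriented by $\omega_0$, the same normalization of $b$ along each circuit via the moves of Proposition~\ref{vert:moves:tria:prop} (dually: making $\gamma$ an overarc at all its vertices), the same identification of $b'$ as the reversal of the edges $v_2v_3$ (your local checks here --- index flip, flip of the prevailing orientation of exactly the two faces containing $v_2v_3$, and the passage of the distinguished pair of edges from $\{v_0v_2,v_1v_3\}$ to $\{v_0v_3,v_1v_2\}$ --- are all correct), and the same reduction of the weight correction to the identity $\Delta\alpha=\delta(\Delta\beta)$. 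So the strategy is right; the problem is that the two substantive steps are announced rather than carried out. The telescoping claim is the heart of the matter and is not a formality: the paper proves it by tabulating the contribution of an edge $e$ of $\gamma$ to $(\Delta\alpha)(R)$ over the nine possible positions of a region $R$ relative to $e$ and the four index combinations of the ends of $e$, and then by an in/out bookkeeping argument showing that for each region these contributions sum (mod $2$) to the number of distinct-index edges of $\gamma$ that $\partial R$ visits. Without this computation (or an equivalent one) your proposal is a plan, not a proof.

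Concerning what you call the genuine obstacle: you are right that $\delta(\Delta\beta)=\Delta\alpha$ by itself only shows that $\overline{\beta}+\Delta\overline{\beta}$ is an admissible $2$-chain for $\left(\omega',b'\right)$, leaving open a possible discrepancy by the class of a cocycle; but you are wrong in predicting how the paper deals with this. The paper does \emph{not} close the argument with a controlled homotopy of the frame along the loop of the spine corresponding to $\gamma$: its proof of Proposition~\ref{circuit:move:prop} consists precisely of the cochain computation described above, which the authors themselves present as an argument ``in a slightly indirect way'' in the spirit of Remark~\ref{vertex:obstruction:rem}. Consequently, if you accept that level of argument, then your proof is completed simply by executing the table computation; if instead you insist --- as your last paragraph does --- that a frame-level justification is needed, then you must supply it yourself, for instance by comparing the explicit frames $\varphi(P,\omega,b)$ and $\varphi(P,\omega',b')$ of Definition~\ref{main:defn} edge by edge along $\gamma$ and checking that their twisting difference is the cochain $\Delta\beta$ itself, not merely some cochain with the same coboundary. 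As written, your proposal defers this to a lemma you assume the paper provides in the spine language; no such lemma appears there, so the proposal remains incomplete exactly at the point where you expected the paper to do the work.
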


\begin{prop}\label{tria:change:prop}
$s\left(\calT_0,\omega_0,b_0,\overline{\beta}_0\right)=s\left(\calT_1,\omega_1,b_1,\overline{\beta}_1\right)$ if and only if the quadruples
$\left(\calT_0,\omega_0,b_0,\overline{\beta}_0\right)$ and $\left(\calT_1,\omega_1,b_1,\overline{\beta}_1\right)$
are related by the moves of
Propositions~\ref{vert:moves:tria:prop} and~\ref{vert:circuit:prop} and those shown in Figg.~\ref{tetraMP:fig}
\begin{figure}
    \begin{center}
    \includegraphics[scale=.63]{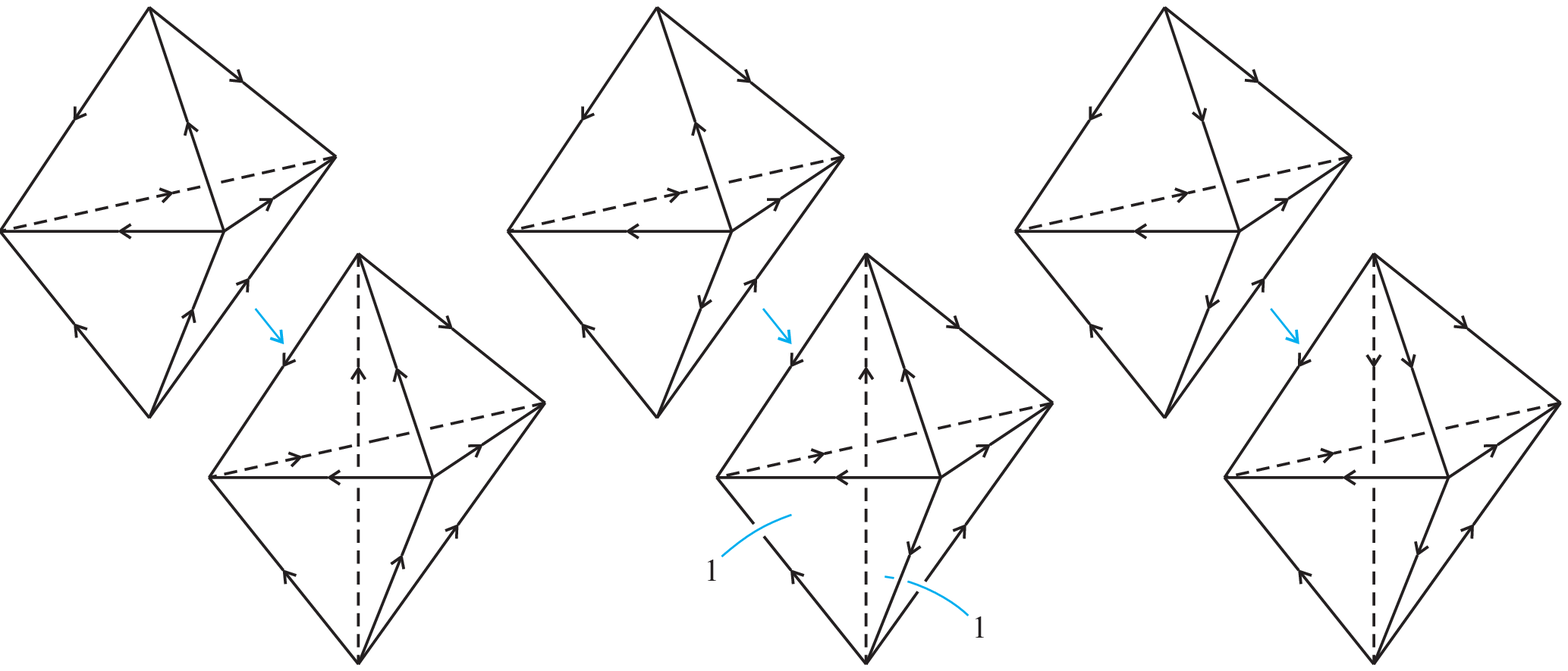}
    \end{center}
\vspace{-.5cm}\mycap{Moves preserving the spin structure. Note that in the central move the coefficients $1$ are
given to one internal and to one external face; coefficients $0$ are never shown.\label{tetraMP:fig}}
\end{figure}
and~\ref{tetrabubble:fig}.
\begin{figure}
    \begin{center}
    \includegraphics[scale=.65]{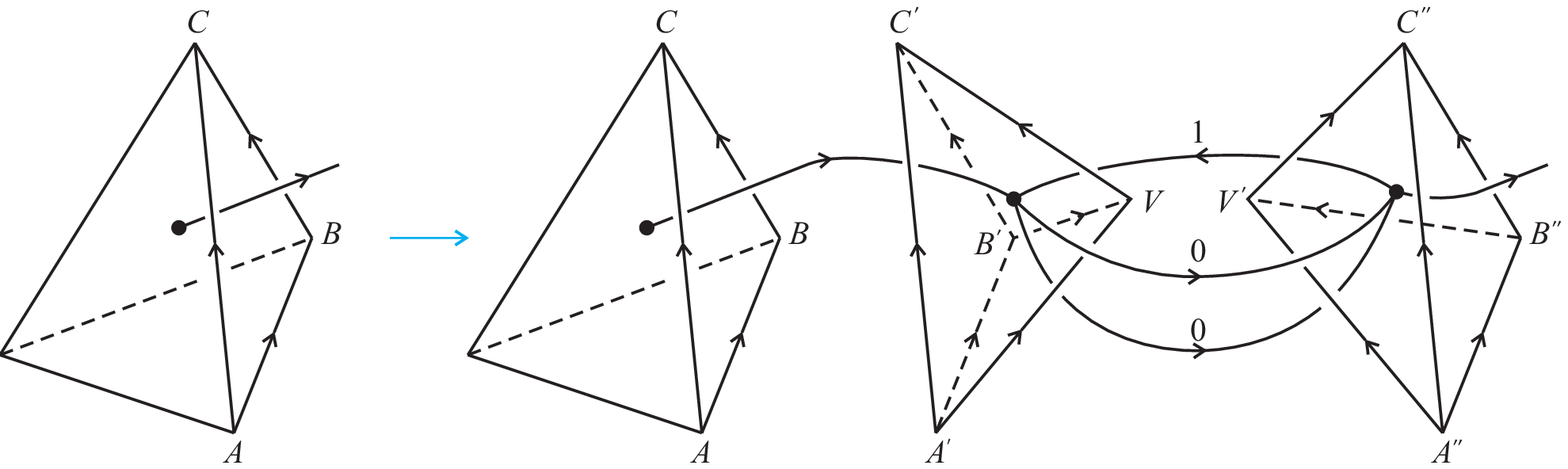}
    \end{center}
\vspace{-.5cm}\mycap{A move increasing by one the number of punctures and preserving the spin
structure. The coefficients of $ABV$, $ACV$ and $BCV$ in the 2-chain after the move are $0$, $0$ and $1$.\label{tetrabubble:fig}}
\end{figure}
\end{prop}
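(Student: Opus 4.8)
The plan is to establish the two implications separately, treating the ``if'' (soundness) direction by a direct local computation and the ``only if'' (completeness) direction by reduction to the Matveev--Piergallini calculus. Throughout I would work in the dual language of special spines of Sections~\ref{spines:sec} and~\ref{moves:sec}, where $s$ is read off a trivialization of $TM$ along the $1$-skeleton determined by $(\omega,b)$, and where the two moves of Propositions~\ref{vert:moves:tria:prop} and~\ref{vert:circuit:prop} have already been shown to preserve $s$.

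For soundness it then suffices to check that the moves of Figg.~\ref{tetraMP:fig} and~\ref{tetrabubble:fig} preserve $s$. For each depicted move I would compute how $\overline{\alpha}(P,\omega,b)$ changes using the two-type recipe stated before Proposition~\ref{alpha:tria:prop}, and verify that the prescribed increment of $\overline{\beta}$ (adding $1$ to the indicated faces) is precisely a $2$-chain whose boundary matches the change in $\overline{\alpha}$. This keeps the identity $\partial\overline{\beta}'=\overline{\alpha}(P,\omega',b')$ in force and leaves the resulting class in $H_2\left(\calT;\zetadue\right)$ unchanged, so by Proposition~\ref{alpha:tria:prop} the spin structure is preserved; the bubble move of Fig.~\ref{tetrabubble:fig} requires the same bookkeeping plus the remark that creating one extra puncture does not affect $s$ on $M$.

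For completeness I would invoke the Matveev--Piergallini theorem in the form adapted to loose triangulations with ideal and internal vertices: any two such triangulations $\calT_0$ and $\calT_1$ of $M$ are connected by a finite sequence of $2\leftrightarrow3$ moves together with bubble moves adjusting the number of internal vertices and punctures. The strategy is to transport the decoration along such a sequence, realizing each bare move as a decorated one drawn from Figg.~\ref{tetraMP:fig} and~\ref{tetrabubble:fig}; since every move in play preserves $s$, the decoration produced on $\calT_1$ at the end still encodes the original spin structure. The main obstacle is this \emph{realizability} step: a bare $2\leftrightarrow3$ move imposes no constraint on the surrounding $(\omega,b)$, whereas the decorated versions are stated only for the few standard local configurations shown. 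I would resolve it by a finite case analysis, enumerating the local patterns of $\omega$ around the edge and faces involved and showing that each can be brought to a depicted standard form, first by the local weak-branching moves of Proposition~\ref{vert:moves:tria:prop} and, where $\omega$ itself must be altered, by the circuit move of Proposition~\ref{vert:circuit:prop}. The fact that every triangulation admits a pre-branching with a compatible weak branching guarantees that the local picture is never obstructed, so the enumeration is exhaustive.

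Finally, once the decoration has reached $\calT_1$ I obtain two quadruples $\left(\calT_1,\omega_1',b_1',\overline{\beta}_1'\right)$ and $\left(\calT_1,\omega_1,b_1,\overline{\beta}_1\right)$ on the same triangulation encoding the same $s$; by Proposition~\ref{vert:circuit:prop} these are related by the moves of Propositions~\ref{vert:moves:tria:prop} and~\ref{vert:circuit:prop}. Here I would note that, with $\overline{\beta}$ read up to $2$-boundaries, equality of $s$ on a fixed $\left(\calT_1,\omega_1,b_1\right)$ already forces equality of $\overline{\beta}$ by Proposition~\ref{alpha:tria:prop}, so no separate argument for the chains is needed. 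Concatenating the transport sequence with this last step connects the two original quadruples by the listed moves, completing the ``only if'' direction.
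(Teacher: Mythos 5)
The gap is in your ``if'' (soundness) direction. Verifying that the prescribed increment of $\overline{\beta}$ has boundary equal to the change in $\overline{\alpha}$ only shows that the moves of Figg.~\ref{tetraMP:fig} and~\ref{tetrabubble:fig} carry valid quadruples to valid quadruples; it does not identify which spin structure the new quadruple encodes. Indeed \emph{every} chain $\overline{\beta}_1$ with $\partial\overline{\beta}_1=\overline{\alpha}\left(\calT_1,\omega_1,b_1\right)$ satisfies that identity, and as $\overline{\beta}_1$ ranges over all such chains the resulting spin structures exhaust all spin structures on $M$ (this is implicit in Proposition~\ref{alpha:tria:prop} and its dual, Proposition~\ref{coboundary:use:prop}), so the boundary identity alone cannot single out $s$. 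Your appeal to ``leaving the class in $H_2\left(\calT;\zetadue\right)$ unchanged'' is not meaningful here: for these moves $\overline{\beta}_0$ and $\overline{\beta}_1$ live in chain groups of \emph{different} triangulations, and Proposition~\ref{alpha:tria:prop} compares chains only on one fixed $\left(\calT,\omega,b\right)$. What is missing is the geometric step: a proof that the frame along the dual $1$-skeleton determined by the decorated data after the move is homotopic, inside the ball supporting the move, to (an extension of) the frame determined before the move. The paper supplies exactly this on top of the obstruction bookkeeping, which it explicitly calls only an ``indirect argument'' (Fig.~\ref{MPbubbleproof:fig}): a direct frame homotopy for the bubble move (Fig.~\ref{bubbleproof:fig}) and the argument of~\cite{LNM} for the $2$--$3$ moves. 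Without this step the ``if'' direction is unproved.

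Your ``only if'' (completeness) direction is structurally the paper's own: invoke the Matveev--Piergallini calculus, normalize the local configuration so that each bare move can be performed as a decorated one, then conclude on the common triangulation $\calT_1$ via Propositions~\ref{vert:moves:tria:prop} and~\ref{vert:circuit:prop}. Two remarks: the normalization can be achieved with the moves of Proposition~\ref{vert:moves:tria:prop} alone (the paper gives an explicit five-step procedure on the dual graphs using $I$, $\duerom$ and their inverses), so invoking the circuit move there is unnecessary; and your ``finite case analysis'' is precisely where the work of this direction lies, so it would need to be carried out explicitly rather than asserted.
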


\begin{rem}
\emph{In this result one can avoid the move of Fig.~\ref{tetrabubble:fig}
if $\calT_0$ and $\calT_1$ have the same number of internal vertices and both consist
of at least two tetrahedra.}
\end{rem}

\section{Spin structures from weakly branched spines}\label{spines:sec}
We will now explain how the spin structure $s\left(\calT,\omega,b,\overline{\beta}\right)$
mentioned in the previous section is constructed. As announced, this employs the viewpoint of special spines,
which is dual to that of triangulations.

To a triangulation $\calT$ of $M$ we can associate the dual \emph{special spine} $P$
of $M$ minus some balls, as suggested in Fig.~\ref{duality:fig}.
The polyhedron $P$ is a compact $2$-dimensional one onto which
$M$ minus some balls collapses. Every point of $P$ has a neighbourhood
homeomorphic to the cone over a circle,
or over a circle with a diameter (in which case the point is said to belong to a \emph{singular edge}),
or over a circle with three radii (in which case the point is called a \emph{singular vertex}, and
the neighbourhood itself is called a \emph{butterfly}).
Moreover $P$ has vertices, its singular set $S(P)$ is a $4$-valent graph (actually, it is the gluing graph of $\calT$)
and the components of $P$ minus $S(P)$, that we call \emph{regions}, are homeomorphic to open discs.
Any such $P$ is called a \emph{special polyhedron}, and it is known that there can exist at most one \emph{thickening}
of $P$, namely a
punctured manifold $M$ collapsing onto $P$, in which case $P$ dually defines a triangulation of $M$.
Moreover one can add to $P$ an easy extra combinatorial structure, called
a \emph{screw-orientation} (see~\cite{BP:Manuscripta})
ensuring that $P$ is thickenable and that its thickening is oriented.
A screw-orientation for $P$ is an orientation of each edge $e$ of $P$
and a cyclic ordering of the three germs of regions incident to $e$, up to simultaneous reversal of both, with
obvious compatibility at vertices. All the special polyhedra we will consider will be embedded in an
oriented $3$-manifold or locally embedded in $3$-space, and we
stipulate from now on that the screw-orientation will always be the induced one, which allows us
to avoid discussing screw-orientation and orientation altogether.
\begin{figure}
    \begin{center}
    \includegraphics[scale=1]{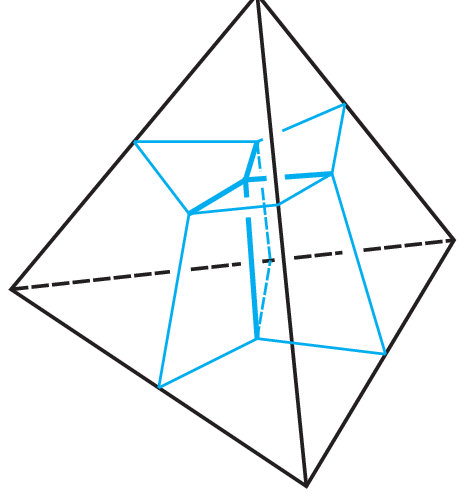}
    \end{center}
\vspace{-.5cm}\mycap{Duality between a tetrahedron and a \emph{butterfly} (the regular neighbourhood
of a vertex in a special spine).\label{duality:fig}}
\end{figure}

\subsection{Branched spines}
If an oriented tetrahedron $\Delta$ is branched, one can endow each wing of the dual butterfly $Y$ with the orientation
such that the edge of $\Delta$ dual to the wing is positively transversal to the wing. (Note that the ambient orientation is used here.)
One can moreover smoothen $Y$ along its singular set so that the positive transversal directions to the wings match,
as shown in Fig.~\ref{smoothvertices:fig},
where we show the butterflies dual to the branched tetrahedra of
Fig.~\ref{tetraindicesduals:fig}-left. We can further define along the singular set of $Y$
two vector fields $\nu$ (the positive transversal to the wings) and
$\mu_0$ (the descending vector field), and an orientation of the $4$ singular edges of the butterfly,
as shown in Fig.~\ref{numuonbutterfly:fig}.
Note that the orientation of an edge $e$ of a butterfly is always
given by the wedge of $\nu$ and $\mu_0$ along $e$, and it
is the prevailing orientation of the three induced by the wings incident to $e$.
\begin{figure}
    \begin{center}
    \includegraphics[scale=.6]{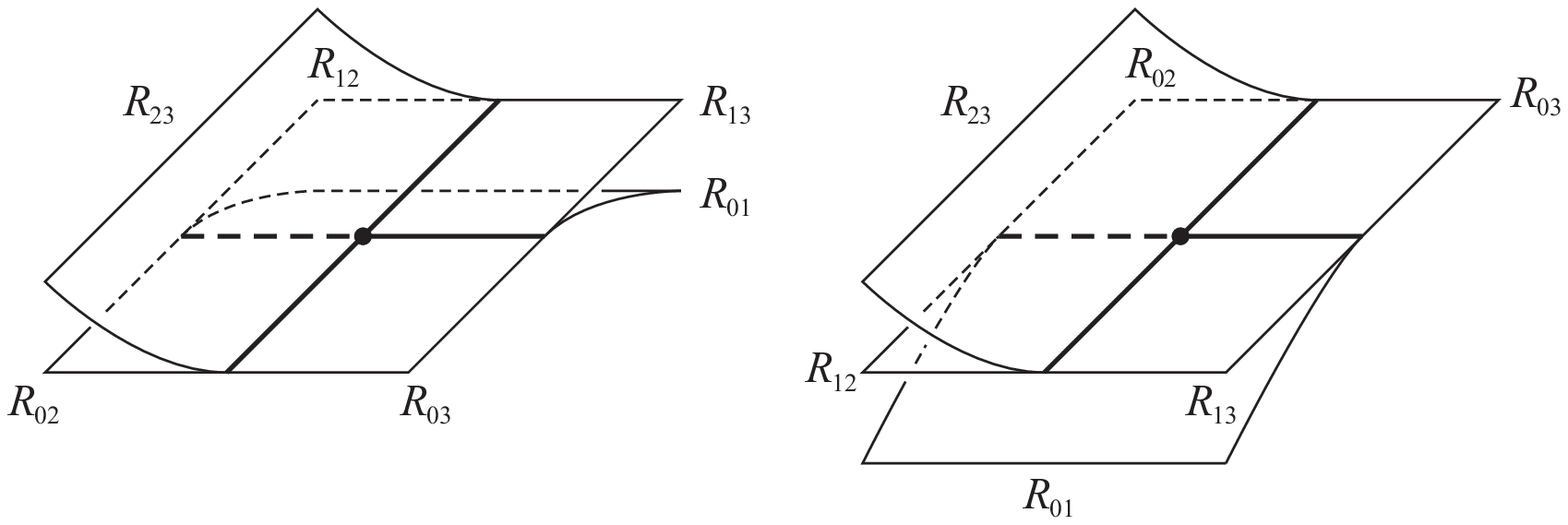}
    \end{center}
\vspace{-.5cm}\mycap{Smoothing of a butterfly carried by a branching of its dual tetrahedron $\Delta$.
Here $R_{ij}$ denotes the wing of the butterfly dual to the edge $v_iv_j$ of $\Delta$. \label{smoothvertices:fig}}
\end{figure}
\begin{figure}
    \begin{center}
    \includegraphics[scale=.6]{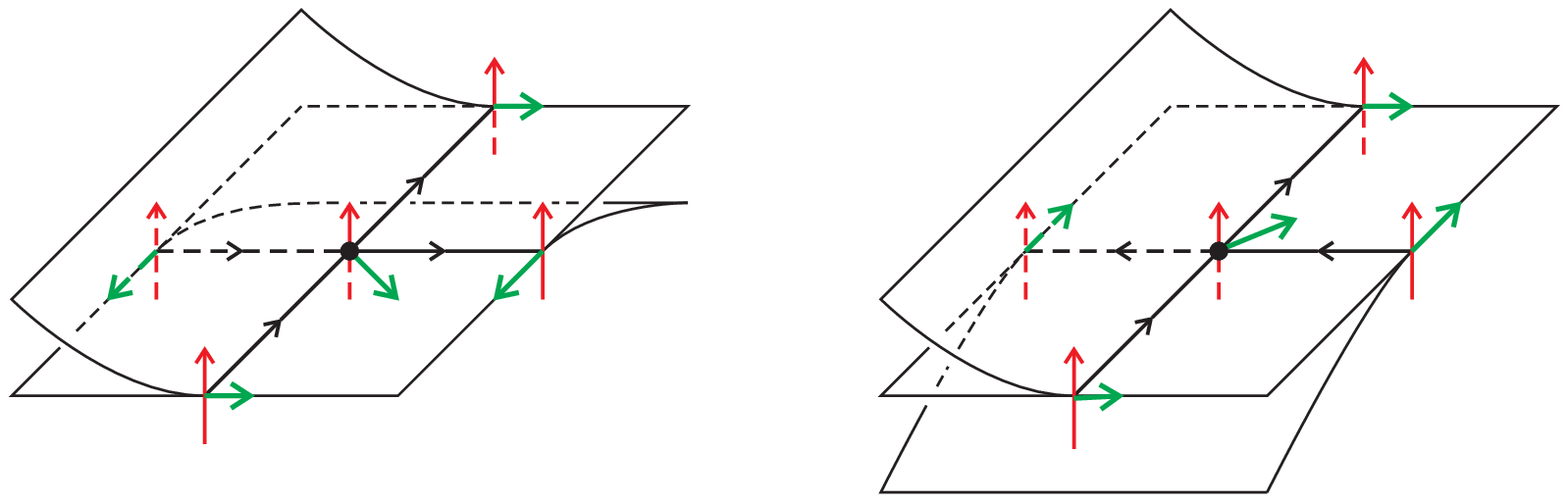}
    \end{center}
\vspace{-.5cm}\mycap{The fields $\nu$ (vertical) and $\mu_0$ (horizontal) along the singular set of a smooth butterfly, and
the orientation of its edges.\label{numuonbutterfly:fig}}
\end{figure}

\subsection{Weakly branched triangulations and the\\ induced frame along the dual $1$-skeleton}
Let us fix in this subsection a triangulation $\calT$ of an oriented manifold $M$ and
the special spine $P$ dual to $\calT$. If $\calT$ carries a \emph{global branching}, namely
if each tetrahedron in $\calT$ is endowed with a branching
so that all face-pairings match the edge orientations, then the frame $(\nu,\mu_0)$
extends to $S(P)$, as in Fig.~\ref{numubranchededge:fig} below.
However, a global branching does not always exist, and
we explain here how the structure of weak branching
still allows to globally define a frame along $S(P)$.

\begin{rem}
\emph{We will call \emph{frame} on a subset $X$ of $M$ a pair of linearly independent
sections defined on $X$ of the tangent bundle $TM$ of $M$; since $M$ is oriented, this
uniquely induces a trivialization of $TM$ on $X$.}
\end{rem}

Let us then take
a pre-branching $\omega$ of $\calT$, viewed as an orientation of $S(P)$
with two incoming and two outgoing edges at each vertex, and a weak branching
$b$ compatible with $\omega$. For an edge $e$ of $P$ the following
three possibilities (corresponding to those in Fig.~\ref{facegluing:fig}) occur:
\begin{itemize}
\item $e$ can be a \emph{branched edge} (type $\emptyset$), namely one along which the branchings defined at the ends are
compatible, as in Fig.~\ref{numubranchededge:fig};
the same figure shows how to (obviously) extend the frame $(\nu,\mu_0)$ along such an $e$;
\item If $e$ is not branched there is only one region $A$ incident to $e$ lying on the two-fold side
(namely, to the left of $e$) at both ends of $e$, and we say that:
\begin{itemize}
\item $e$ is a \emph{positive unbranched edge} (type $+1$) if $A$ is under at the beginning of $e$ and over at the end of $e$,
as in Fig.~\ref{numuextension:fig}-top/left;
\item $e$ is a \emph{negative unbranched edge} (type $-1$)  if $A$ is over at the beginning of $e$ and under at the end of $e$,
as in Fig.~\ref{numuextension:fig}-top/right.
\end{itemize}
In both cases we can again coherently define $\nu$ along $e$, by letting the transverse orientation of $A$
prevail on the other two, and accordingly define $\mu_0$, as illustrated in Fig.~\ref{numuextension:fig}-bottom.
\end{itemize}
\begin{figure}
    \begin{center}
    \includegraphics[scale=.6]{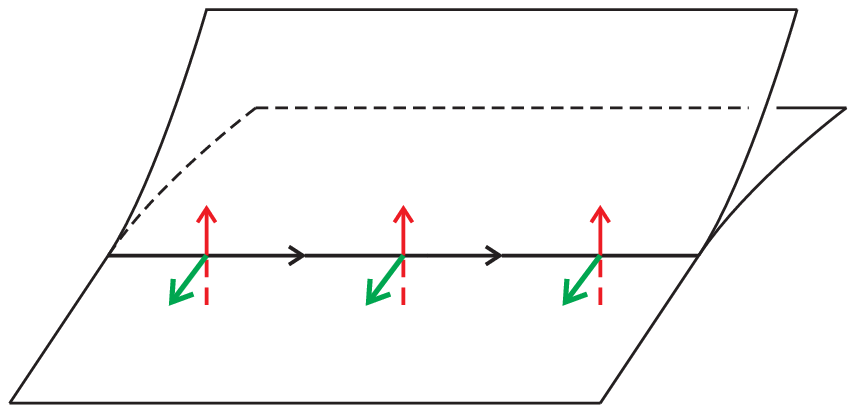}
    \end{center}
\vspace{-.5cm}\mycap{A branched edge and the extension of $(\nu,\mu_0)$ along it.\label{numubranchededge:fig}}
\end{figure}
\begin{figure}
    \begin{center}
    \includegraphics[scale=.6]{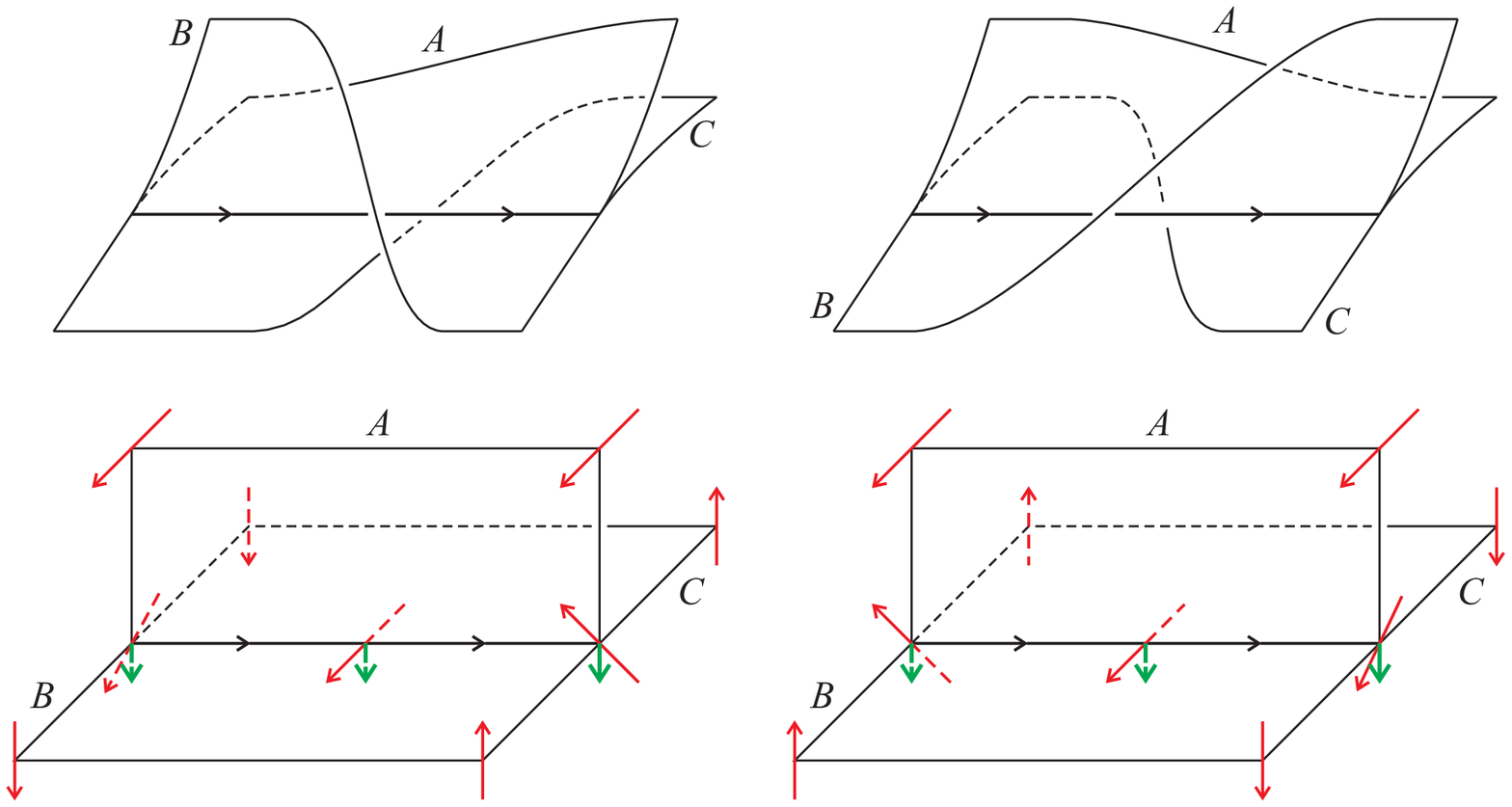}
    \end{center}
\vspace{-.5cm}\mycap{Top-left: a positive unbranched edge. Top-right: a negative one.
Bottom: the corresponding extensions of $(\nu,\mu_0)$. \label{numuextension:fig}}
\end{figure}

For a technical but important reason to a spine $P$ with pre-branching $\omega$ and compatible weak branching $b$
we actually associate a frame $\varphi=(\nu,\mu)$ that is obtained from the above-described $(\nu,\mu_0)$ by
adding to $\mu_0$ a full rotation around $\nu$ along each unbranched edge of $P$, as
shown in Fig.~\ref{correction:fig}.
\begin{figure}
    \begin{center}
    \includegraphics[scale=.6]{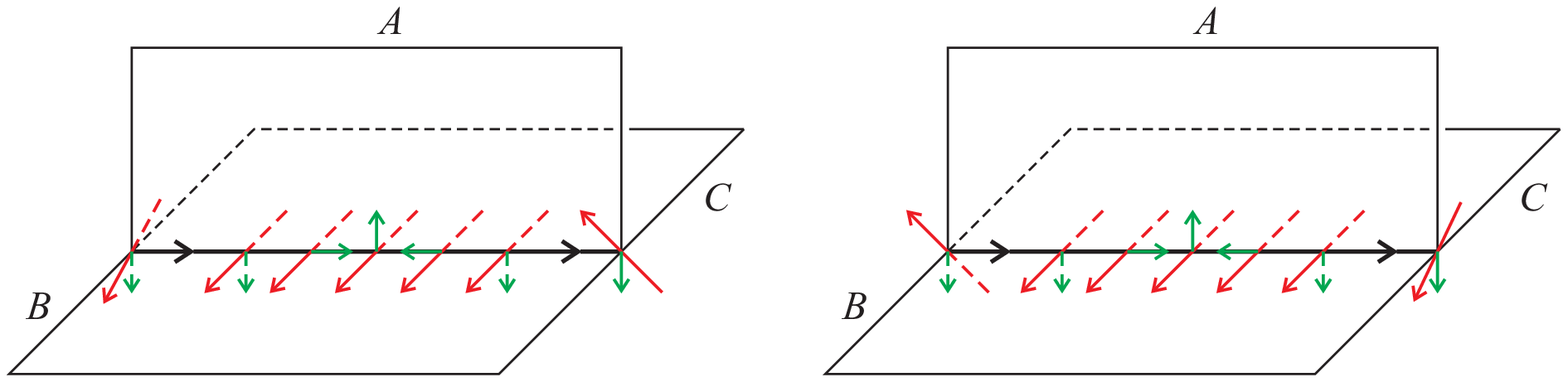}
    \end{center}
\vspace{-.5cm}\mycap{The field $\mu$ obtained by adding a full rotation to $\mu_0$ along each
(positive or negative) unbranched edge of $P$.\label{correction:fig}}
\end{figure}
We summarize the main points of our construction in the following:

\begin{defn}\label{main:defn}
\emph{Let $\calT$ be a triangulation of a compact oriented $3$-manifold $M$, and let $P$ be the dual spine of
 $M$ minus some balls .
A \emph{pre-branching} on $P$ is an orientation $\omega$ of its edges such that at each vertex two germs of edges are incoming
and two are outgoing. A \emph{weak branching} on $\calT$ compatible with $\omega$ is a choice $b$ of a branching for each
tetrahedron of $\calT$, such that $b$ induces $\omega$
at each vertex of $P$ according to Fig.~\ref{numuonbutterfly:fig}. The frame $\varphi(P,\omega,b)=(\nu,\mu)$ defined
along $S(P)$ is given by the pair $(\nu,\mu_0)$ at the vertices of $P$ as in
Fig.~\ref{numuonbutterfly:fig}, with extension $(\nu,\mu_0)$ along the edges of $P$ as in
Figg.~\ref{numubranchededge:fig} (branched edges) and~\ref{numuextension:fig}
(unbranched edges), and correction from $(\nu,\mu_0)$ to $(\nu,\mu)$
along the unbranched edges as in Fig.~\ref{correction:fig}.}
\end{defn}

\begin{rem}
\emph{For every triangulation $\calT$ the dual spine $P$ always admits some pre-branching $\omega$.
Given $\omega$, for a compatible weak branching $b$ there are $4$ independent choices at each tetrahedron of $\calT$.
The frame $\varphi(P,\omega,b)$ is well-defined up to homotopy on $S(P)$.}
\end{rem}

\subsection{Graphs representing weakly branched triangulations}
In this subsection we introduce a convenient graphic encoding for weakly branched triangulations
that we will later use to prove (the dual version of) Proposition~\ref{alpha:tria:prop}.
Let $\calN$ be the set of finite $4$-valent graphs $\Gamma$ with the following extra structures:
\begin{itemize}
\item Each edge of $\Gamma$ is oriented and bears a colour $\emptyset$, $+1$ or $-1$;
\item At each vertex of $\Gamma$ a planar structure as in Fig.~\ref{Wvertices:fig} left/right is given.
\begin{figure}
    \begin{center}
    \includegraphics[scale=.6]{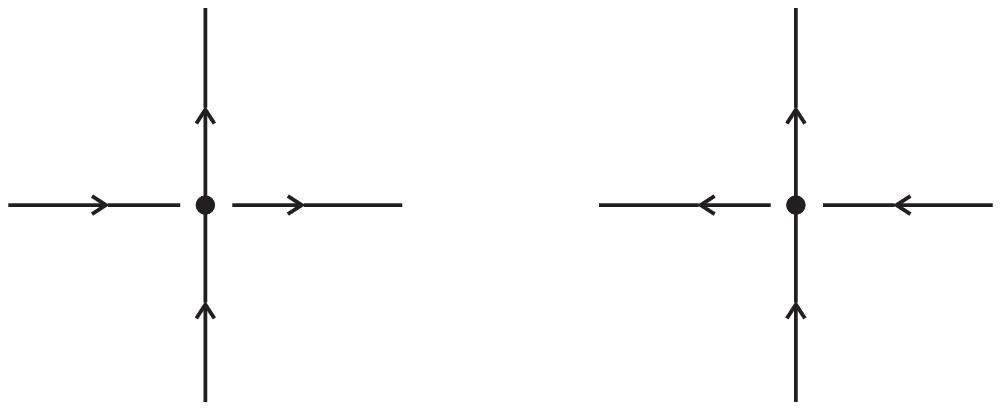}
    \end{center}
\vspace{-.5cm}\mycap{Planar structure of index $+1$ (left) or $-1$ (right) at a vertex of a graph in $\calN$.\label{Wvertices:fig}}
\end{figure}
\end{itemize}

Let $\calT$ be a weakly branched triangulation of an oriented $3$-manifold $M$, and let $P$
be the dual spine of  $M$ minus some balls. We can turn $S(P)$ into a graph $\Gamma(\calT)\in\calN$
by associating to a branched tetrahedron of $\calT$ as
Fig.~\ref{tetraindicesduals:fig}-left (or to a smoothed vertex of the dual spine $P$ as in
Fig.~\ref{numuonbutterfly:fig}) a vertex as in Fig.~\ref{Wvertices:fig},
and giving colour $\emptyset,+1,-1$ to each edge depending on its type.

The procedure just described can of course be reversed, namely to a graph $\Gamma\in\calN$ we can
associate a weakly branched triangulation $\calT(\Gamma)$ of an oriented manifold $M$.
Some examples of how to explicitly construct the spine $P$ dual to $\calT(\Gamma)$
along the edges of $\Gamma$ are illustrated in Fig.~\ref{Wreconstruct:fig}.
\begin{figure}
    \begin{center}
    \includegraphics[scale=.6]{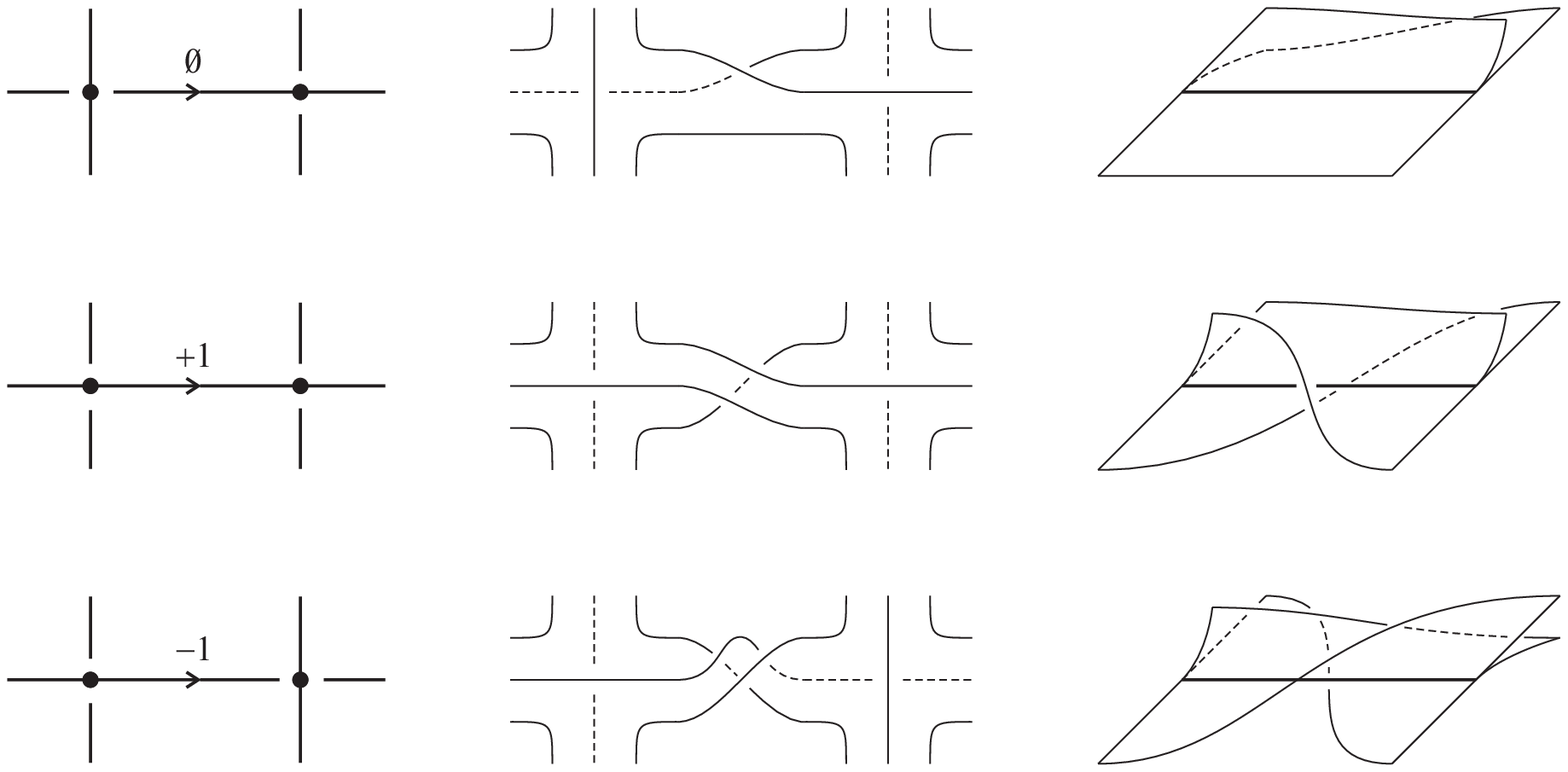}
    \end{center}
\vspace{-.5cm}\mycap{Reconstruction of a weakly branched spine from a graph in $\calN$.\label{Wreconstruct:fig}}
\end{figure}
(Recall that $P$ is determined by the attaching circles of its regions to $S(P)$,
which is what we show in Fig.~\ref{Wreconstruct:fig}-centre, and that the screw-orientation
of $P$ is induced by the local embedding in $3$-space, shown in Fig.~\ref{Wreconstruct:fig}-right.)
We summarize our construction as follows:

\begin{prop}\label{W:bijection:prop}
The set $\calN$ of decorated graphs corresponds bijectively to the set
of triples $(P,\omega,b)$ with $P$ an oriented special spine, $\omega$ a pre-branching on $P$
and $b$ a weak branching compatible with $\omega$.
\end{prop}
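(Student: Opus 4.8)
The plan is to construct explicitly the two maps between $\calN$ and the set of triples $(P,\omega,b)$ and to check that they are mutually inverse, relying throughout on the fundamental fact that an oriented special spine is completely determined by the combinatorics of its singular set $S(P)$ together with the local embedding data at its vertices and edges. Recall that $P$ is recovered from $S(P)$ by specifying, at each vertex, the butterfly (the cone over a circle with three radii) and, along each edge, the way the wings of the two adjacent butterflies are matched; the screw-orientation, which guarantees that $P$ is thickenable and that its thickening is oriented, is then induced by the local embedding in $3$-space. Thus it suffices to show that this local embedding data is encoded faithfully and without redundancy by the decorations of a graph in $\calN$.

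The forward map $(P,\omega,b)\mapsto\Gamma$ has already been described: one takes the $4$-valent graph $S(P)$, records at each vertex the planar structure of index $\pm1$ dictated by the smoothed butterfly of Fig.~\ref{numuonbutterfly:fig}, orients its edges by $\omega$, and colours each edge by its type $\emptyset,+1,-1$ as in Fig.~\ref{facegluing:fig}. The point to verify is that this assignment loses no information. The planar structures of index $\pm1$ carry the built-in local orientation pattern of Fig.~\ref{numuonbutterfly:fig}, which is always $2$-in/$2$-out; hence the edge-orientation of $\Gamma$ is compatible with both endpoints exactly when it is a pre-branching, and together with the indices it reproduces the two local models at each vertex. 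The colour of an edge, in turn, determines via Figg.~\ref{numubranchededge:fig} and~\ref{numuextension:fig} how the wings on its two sides are identified. Hence the triple $(P,\omega,b)$ can be read off from $\Gamma$ vertex by vertex and edge by edge.

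For the inverse map I would reconstruct, from a given $\Gamma\in\calN$, a spine $P$ as follows. At each vertex I place the butterfly determined by the planar structure of index $\pm1$, smoothed as in Fig.~\ref{numuonbutterfly:fig}; this fixes simultaneously the germ of $\omega$ and the branching $b$ at the corresponding tetrahedron. Along each edge I glue the two butterflies using the recipe of Fig.~\ref{Wreconstruct:fig}, the colour and the orientation prescribing which of the matchings of Fig.~\ref{numubranchededge:fig} or~\ref{numuextension:fig} to use. The attaching circles of the regions of $P$ are obtained by following the resulting arcs around $S(P)$, and the local embedding of Fig.~\ref{Wreconstruct:fig}-right supplies the screw-orientation. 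One then checks that the outcome is a genuine special polyhedron, i.e.\ that its regions are discs, so that, since a special polyhedron admits at most one thickening, it determines a well-defined oriented punctured manifold and hence a weakly branched triangulation $\calT(\Gamma)$ carrying the reconstructed $\omega$ and $b$.

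Finally, the two maps are mutually inverse essentially by construction: both directions are governed by the very same local dictionary between butterflies-with-gluings and indexed-coloured-oriented vertices-with-edges, so passing from $(P,\omega,b)$ to $\Gamma$ and back returns the same local models at every vertex and every edge, and conversely. The main obstacle is precisely the well-definedness of the reconstruction: one must check that the local gluing instructions are globally coherent, i.e.\ that the wings close up into genuinely embedded disc regions and that the screw-orientations prescribed edge-by-edge agree at the vertices, so that $P$ is honestly thickenable with an oriented thickening. This coherence is where the constraints built into the index $\pm1$ planar structures of Fig.~\ref{Wvertices:fig} and into the three admissible colour/orientation patterns of Fig.~\ref{facegluing:fig} turn out to be exactly what is needed --- neither too weak, which would fail to reconstruct $P$, nor too strong, which would miss some triples.
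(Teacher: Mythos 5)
Your proposal is correct and follows essentially the same route as the paper: the paper also treats this proposition as a summary of the two explicit constructions --- recording vertex indices, edge orientations and colours to get $\Gamma$ from $(P,\omega,b)$, and reconstructing the spine from $\Gamma$ via the butterflies, the attaching circles of the regions, and the screw-orientation induced by the local embedding in $3$-space (Fig.~\ref{Wreconstruct:fig}). If anything, your discussion of global coherence and thickenability is more explicit than the paper's, which simply invokes the fact that a screw-oriented special polyhedron has a unique oriented thickening.
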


For later purpose we now need to extend the set of graphs $\calN$ to some $\calNtil$, by allowing $2$-valent
vertices besides the $4$-valent (decorated) ones, and insisting that the edge orientations should match
through the $2$-valent vertices. By interpreting each $2$-valent vertex as
    \includegraphics[scale=.25]{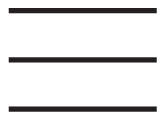}
or
    \includegraphics[scale=.25]{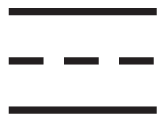}
we can then associate as above to each element $\widetilde{\Gamma}$ of $\calNtil$ a weakly branched special spine.
On the other hand we can define the \emph{fusion} of two edges separated by a valence-2 vertex
by interpreting the set of colours $\{\emptyset,+1,-1\}$ as $\zetatre$ and postulating that
colours sum up under fusion. Applying fusion as long as possible to
$\widetilde{\Gamma}\in\calNtil$ we then get some $\Gamma\in\calN$.
The following result can be easily verified ---see Fig.~\ref{Wassoc:fig}
\begin{figure}
    \begin{center}
    \includegraphics[scale=.6]{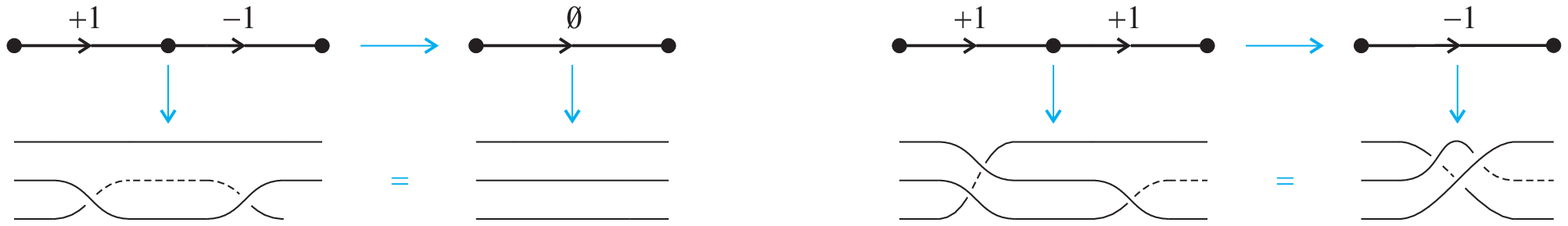}
    \end{center}
\vspace{-.5cm}\mycap{To each $\widetilde{\Gamma}\in\calNtil$ one can uniquely associate a weakly branched special spine,
also given by the graph $\Gamma\in\calN$ obtained from $\widetilde{\Gamma}$ by fusing the edges
through valence-$2$ vertices.\label{Wassoc:fig}}
\end{figure}

\begin{prop}\label{W:assoc:prop}
The weakly branched special spine associated to $\widetilde{\Gamma}\in\calNtil$ is independent of
the interpretation of the $2$-valent vertices, and it coincides with the spine corresponding
to the graph $\Gamma\in\calN$ obtained from $\widetilde{\Gamma}$ by edge-fusion.
\end{prop}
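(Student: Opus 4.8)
The plan is to reduce the entire statement to a purely local verification in a neighbourhood of each $2$-valent vertex, exploiting the fact that the spine associated to a graph in $\calN$ is reconstructed edge-by-edge and vertex-by-vertex, as in Fig.~\ref{Wreconstruct:fig}. The conceptual point underlying this reduction is that the colour of an edge records the net cyclic rotation of the three sheets meeting along the corresponding singular edge of $P$: colour $\emptyset$ means no rotation (a branched edge, as in Fig.~\ref{numubranchededge:fig}), whereas the colours $\pm1$ encode the single $\pm$ rotation carried by a positive or negative unbranched edge together with the full-turn correction of Fig.~\ref{correction:fig}. Since exactly three sheets are involved, these rotations naturally take values in $\zetatre$, which is precisely why fusion is defined over $\zetatre$.

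First I would fix a single $2$-valent vertex $v$ of $\widetilde{\Gamma}$, with incoming edge of colour $c_1$ and outgoing edge of colour $c_2$ (their orientations match through $v$ by definition of $\calNtil$), and write down explicitly the local piece of embedded spine produced by each of the two interpretations of $v$. To establish independence of the interpretation, I would observe that the over- and under-crossing local pictures are related by an ambient isotopy of the locally embedded spine fixing the boundary of the neighbourhood of $v$. Consequently the germs of regions, their screw-orientation, the pre-branching $\omega$ and the weak branching $b$ all agree for the two interpretations, which gives the first assertion.

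Next I would identify the local piece at $v$ (for either interpretation) with the edge-piece reconstructed from a single edge of colour $c_1+c_2$. Traversing the two half-edges across the trivial vertex $v$ composes their rotations, while $v$ itself contributes none, so the total rotation along the fused edge is $c_1+c_2\in\zetatre$, exactly matching the reconstruction rule for an edge of that colour; this is the content summarised in Fig.~\ref{Wassoc:fig}. Applying this to every $2$-valent vertex, and using that addition in $\zetatre$ is associative and commutative so that iterated fusion yields a well-defined $\Gamma\in\calN$ independent of the order of fusions, the spine associated to $\widetilde{\Gamma}$ coincides with the one associated to $\Gamma$, which by Proposition~\ref{W:bijection:prop} is the unique triple $(P,\omega,b)$ determined by $\Gamma$.

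The main, if modest, obstacle lies in the bookkeeping of the last two steps: one must verify that the full-turn correction of Fig.~\ref{correction:fig} and the crossing conventions of Fig.~\ref{numuextension:fig} are tracked consistently, so that stacking crossings genuinely adds colours modulo $3$ rather than merely agreeing up to homeomorphism. Once these conventions are pinned down, the whole argument is a direct inspection of the figures, as the statement anticipates.
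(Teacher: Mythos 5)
Your overall strategy --- reduce to a local check at each $2$-valent vertex, read the edge colours as cyclic matchings of the three sheets along the dual singular edge (so that fusion becomes addition in $\zetatre$), and invoke associativity of $\zetatre$ for the well-definedness of $\Gamma$ --- is exactly the verification-by-local-inspection that the paper itself performs (the paper offers nothing beyond ``easily verified, see Fig.~\ref{Wassoc:fig}''), and that part of your argument is sound.

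The genuine flaw is your justification of interpretation-independence. The over- and under-interpretations of a $2$-valent vertex are \emph{not} related by an ambient isotopy of the locally embedded spine fixing the boundary of the ball: passing from one to the other is a crossing change, in which the moving sheet swings around the singular edge the opposite way, so the two embedded pictures differ by a full twist of the sheets about that edge; a full twist is obstructed rel boundary by the framing it induces on the singular arc, exactly as for a twisted band. The same over-claim infects your fusion step: the concatenation of two $+1$ pictures does not ``exactly match'' the standard $-1$ picture as an embedded object --- the paper says explicitly, in the proof of Proposition~\ref{numuadditivity:prop}, that these two configurations \emph{differ by a full twist}. What makes Proposition~\ref{W:assoc:prop} true is weaker and purely combinatorial: the special spine, as an abstract polyhedron with screw-orientation, retains only the matching of the germs of regions across each edge and their cyclic order along it, and both data are insensitive to full twists; hence the two interpretations, and the fused edge, determine the same triple $(P,\omega,b)$ even though they determine non-isotopic local embeddings. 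Relatedly, the full-turn correction of Fig.~\ref{correction:fig}, which you fold into the very meaning of the colours, plays no role in this statement: it is a frame-level device, introduced precisely because frames, unlike spines, \emph{do} detect full twists --- that is the content of Proposition~\ref{numuadditivity:prop}, not of the proposition at hand. Your argument becomes correct once the isotopy claim is replaced by this invariance of the combinatorial data under full twists.
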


We conclude this subsection by explaining why
have defined $\varphi(P,\omega,b)=(\nu,\mu)$ not simply
as $(\nu,\mu_0)$, but adding instead a full twist to $\mu_0$ along unbranched edges:

\begin{prop}\label{numuadditivity:prop}
Take $\widetilde{\Gamma}\in\calNtil$ and let $\Gamma\in\calN$ be
obtained from $\widetilde{\Gamma}$ by fusing edges through valence-$2$ vertices.
Then the frames $\left(\widetilde{\nu},\widetilde{\mu}\right)$
and $(\nu,\mu)$ carried by $\widetilde\Gamma$ and by $\Gamma$ are homotopic to each other.
\end{prop}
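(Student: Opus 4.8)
The plan is to reduce the statement to a purely local computation at a single valence-$2$ vertex, and then to verify by a finite case analysis that the full-twist normalization of Fig.~\ref{correction:fig} is exactly what makes the framing additive under fusion. First I would fuse edges through valence-$2$ vertices one at a time and invoke transitivity of the homotopy relation, reducing the general case to that of a $\widetilde\Gamma$ carrying a single valence-$2$ vertex on one edge, with the two adjacent segments coloured $c_1$ and $c_2$ and the fused edge of $\Gamma$ coloured $c_1+c_2$ in $\zetatre$. By Proposition~\ref{W:assoc:prop} the two weakly branched spines coincide, so $\widetilde\Gamma$ and $\Gamma$ carry their frames along the very same singular graph $S(P)$, and the two frames are literally equal away from the fused edge $e$; in particular they agree at the $4$-valent endpoints of $e$, where the value is the standard one of Fig.~\ref{numuonbutterfly:fig}. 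Hence it suffices to produce a homotopy, supported near $e$ and relative to $\partial e$, between the two frames restricted to $e$.

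Second, the transversal field $\nu$ is determined by the common local geometry of the spine---it is the prevailing positive transversal to the wings---so $\widetilde\nu$ and $\nu$ coincide up to homotopy along $e$. The only discrepancy is then between $\widetilde\mu$ and $\mu$, which differ by a rotation about the common axis $\nu$. As sections of the oriented frame bundle agreeing on $\partial e$, two such frames are homotopic rel $\partial e$ if and only if the numbers of full turns of $\mu$ about $\nu$ agree modulo $2$, since a $2\pi$-rotation about a fixed axis is the generator of $\pi_1(\mathrm{SO}(3))=\zetadue$. Thus the proposition reduces to a single parity statement: the total turning of $\widetilde\mu$ along the two coloured segments must equal, modulo $2$, the turning of $\mu$ along the fused edge.

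Third, I would compute these turning numbers from the local models. Along an edge the uncorrected field $\mu_0$ turns by a fixed amount $\rho(c)$ determined by the colour $c$ (zero on a branched edge, and a half-turn of opposite signs on positive and negative unbranched edges, as in Figg.~\ref{numubranchededge:fig} and~\ref{numuextension:fig}), to which the correction of Fig.~\ref{correction:fig} adds one full turn on each unbranched edge; across the valence-$2$ vertex the chosen over/under interpretation contributes a further turning $J$ that reconciles the $c_1$- and $c_2$-descriptions of the single geometric field. The claim then amounts to the additivity $r(c_1)+J+r(c_2)\equiv r(c_1+c_2)$ modulo $2$, with $r=\rho+(\text{correction})$, which I would check on the finitely many colour pairs. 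The corrections are needed precisely where the colour sum wraps around in $\zetatre$---most notably $(+1,+1)\mapsto -1$ and $(-1,-1)\mapsto +1$---where a naive count of unbranched edges is off by one full turn, and the extra full turns built into $\mu$ restore the parity.

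The main obstacle is the computation of the crossing contribution $J$ and, above all, the verification that it is the same modulo $2$ for the two interpretations of the valence-$2$ vertex; this is what makes the associated frame independent of the over/under choice, in accordance with Proposition~\ref{W:assoc:prop}. I expect this to hinge on the fact that switching the interpretation changes $J$ by an even number of full turns, so that the parity detected by $\pi_1(\mathrm{SO}(3))$ is unaffected. Establishing this cleanly, rather than by brute force over all diagrams, is the delicate point, and it is exactly here that the full-twist normalization reveals itself as the canonical choice rather than an ad hoc device.
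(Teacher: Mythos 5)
Your proposal follows essentially the same route as the paper's proof: reduce to a single fusion, measure the discrepancy between the concatenated frame and the fused-edge frame as a class in $\pi_1(\textrm{GL}^{+}(3;\matR))=\zetadue$ generated by the full twist, and check the finitely many colour pairs, noting that the full-twist correction on unbranched edges is precisely what compensates the wrap-around cases $(+1)+(+1)=-1$ and $(-1)+(-1)=+1$ --- exactly the structure of the paper's argument, which verifies the needed turning counts by explicit pictures (opposite colours concatenate to a constant $(\nu,\mu_0)$, while the $+1+1$ and $-1$ configurations of $(\nu,\mu_0)$ differ by one full twist, as in Fig.~\ref{numuadditivity:fig}). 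The only differences are presentational: your $\rho$/$J$ turning-number bookkeeping replaces the paper's direct figure-based computation, and the over/under interpretation issue you single out as the main obstacle is absorbed in the paper by Proposition~\ref{W:assoc:prop} and the same figures, so it does not require a separate argument.
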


\begin{proof}
We have to show that when we
fuse two coloured edges into one the frame $(\nu,\mu)$ defined by the fusion
is homotopic to the concatenation of the frames defined by the two edges.
Recall that the colour of the combination is the sum of the colours, and note that the conclusion
is obvious when one of the edge colours is $\emptyset$.
When the two edge colours are opposite to each other one can examine Fig.~\ref{numuextension:fig}
and see that the concatenation of the two frames $(\nu,\mu_0)$ is homotopic to a constant frame;
at the level of $(\nu,\mu)$ we would have to add two full twists to $\mu_0$, which amounts
to nothing, and the conclusion follows. We are left to deal with the sum of two edges with identical colour.
We deal with the case $+1+1$, since $-1-1$ is similar. The frames $(\nu,\mu_0)$ corresponding to
$+1+1$ and to $-1$ are shown in Fig.~\ref{numuadditivity:fig},
and recognized to differ by a full twist. When passing to $(\nu,\mu)$ we have to add two full twists
to $\mu$ (that is, nothing) in the $+1+1$ configuration, and one full twist in the $-1$ configuration,
thus getting homotopic frames.
\end{proof}
\begin{figure}
    \begin{center}
    \includegraphics[scale=.6]{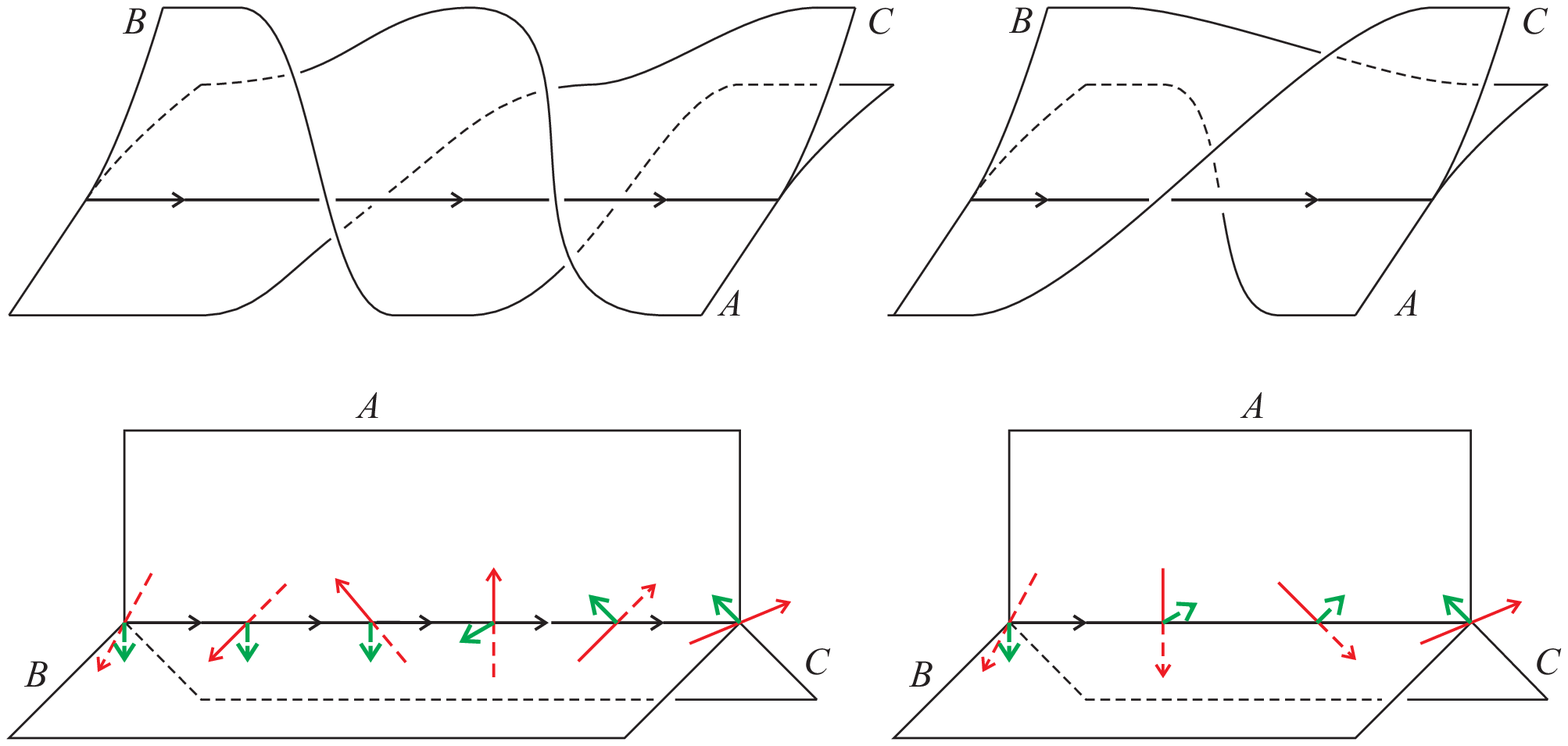}
    \end{center}
\vspace{-.5cm}\mycap{The frames $(\nu,\mu_0)$ corresponding to $+1+1$ and to $-1$.\label{numuadditivity:fig}}
\end{figure}

\subsection{Obstruction computation}
Let us now denote by $\alpha(P,\omega,b)\in C^2\left(P;\zetadue\right)$ the obstruction to extending
$\varphi(P,\omega,b)$ to a frame defined on $P$.
To define $\alpha(P,\omega,b)$, note that $TM$ can always be trivialized as $\textrm{GL}^{+}(3;\matR)\times R$
on each open region $R$ of $P$, and $\alpha(P,\omega,b)(R)$ is the element of $\pi_1(\textrm{GL}^{+}(3;\matR))=\zetadue$
represented by the restriction of $\varphi(P,\omega,b)$ to
(a loop parallel to) $\partial R$.
The next result shows that the chain $\overline{\alpha}(P,\omega,b)=\sum_e\alpha(e)\cdot e\in C_1\left(\calT;\zetadue\right)$
introduced in Section~\ref{tria:sec} is dual to $\alpha(P,\omega,b)$, namely that $\alpha(e)=\alpha(P,\omega,b)\left(R\right)$
if $R$ is the region of $P$ dual to an edge $e$ of $\calT$.

\begin{prop}\label{real:obstruction:computation:prop}
Given $\Gamma\in\calN$ decorate the attaching circles of the regions of the special spine $P$ defined by $\Gamma$ as follows:
\begin{itemize}
\item At each vertex of $\Gamma$ put arrows as in Fig.~\ref{Rmethod:fig}-top;
\begin{figure}
    \begin{center}
    \includegraphics[scale=.6]{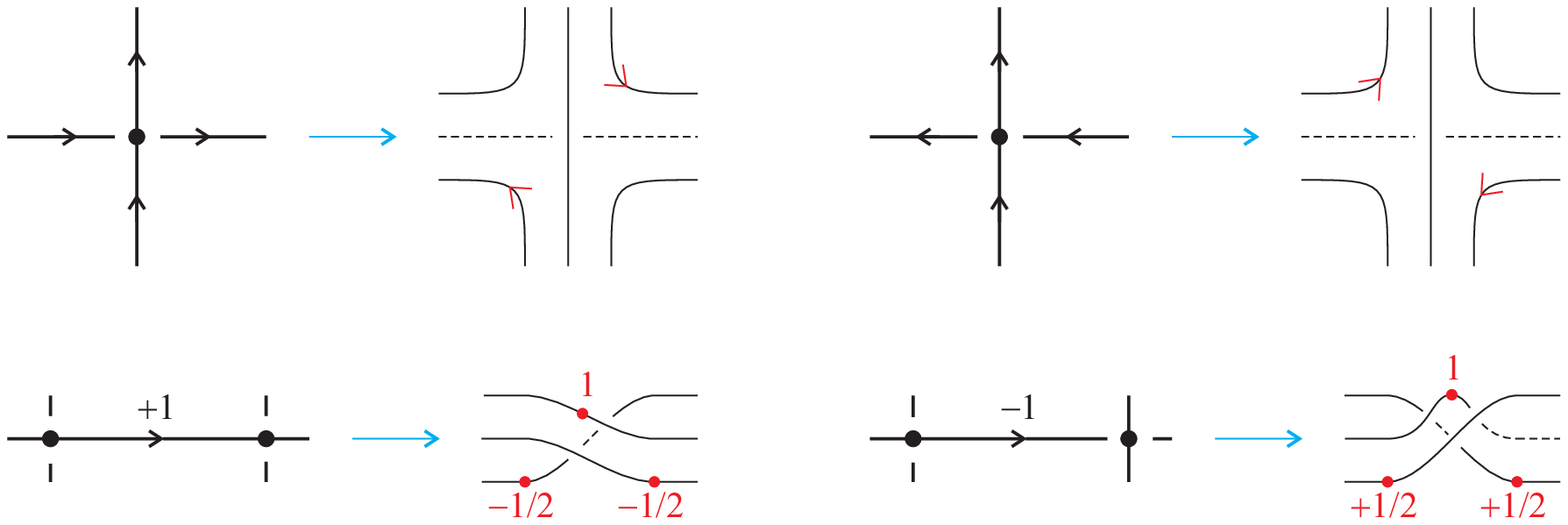}
    \end{center}
\vspace{-.5cm}\mycap{Decoration of the attaching circles of the regions near vertices and edges.\label{Rmethod:fig}}
\end{figure}
\item At each edge $e$ of $\Gamma$, if the edge colour is $\emptyset$, put nothing, while if
the edge colour is $\pm1$ put a weight $1$ on the region that lies to the left of $e$ at both ends of $e$,
and $\mp{\textstyle{\frac12}}$ on the two other regions (see two examples in Fig.~\ref{Rmethod:fig}-bottom).
\end{itemize}
Then $\alpha(P,\omega,b)(R)\in\zetadue$ is computed
as $1$ plus the sum of the numerical contributions along $\partial R$ plus
the sum of contributions from arrows, turned numerical as follows:
choose for $\partial R$ an arbitrary orientation and give each arrow value
$+{\textstyle{\frac12}}$ or $-{\textstyle{\frac12}}$ depending on whether it agrees or not with the orientation.
Moreover, both the sum of the numerical contributions and that of the contributions from arrows turned numerical
belong to $\zetadue$.
\end{prop}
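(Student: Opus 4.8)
The plan is to compute directly the homotopy class of the loop $\varphi(P,\omega,b)|_{\partial R}$ in $\pi_1(\textrm{GL}^{+}(3;\matR))=\zetadue$ by trivializing $TM$ over the disc $R$ and following the rotation of the frame $(\nu,\mu)$ as one runs once around $\partial R$. Since $\pi_1(\textrm{GL}^{+}(3;\matR))$ is abelian the class is additive under concatenation, so I would split $\partial R$ into the arcs it runs along the edges of $S(P)$ and the short corner arcs where it rounds the vertices of $S(P)$, attaching to each a local contribution. An individual arc is not a closed loop, so to make sense of its contribution I would pass to the double cover $\Spin(3)\to\textrm{GL}^{+}(3;\matR)$ (homotopically $SU(2)\to SO(3)$) and record, for the lift, the rotation it sweeps measured in full turns: a $2\pi$ rotation lifts to a path from $1$ to $-1$ and counts $1$, while a $\pi$ rotation counts $\tfrac12$. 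This is precisely the mechanism producing the half-integer values, of which only the sum over the closed loop $\partial R$ is forced to lie in $\zetadue$.

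I would first pin down the fixed summand $1$ as the class of the reference, untwisted configuration: on the simplest model, where $R$ is bounded by branched edges and $(\nu,\mu)=(\nu,\mu_0)$ extends straight through (Fig.~\ref{numubranchededge:fig}), a direct inspection should show that $\varphi|_{\partial R}$ already represents the non-trivial class, the in-spine part of the frame undergoing the $+1$ turn of the tangent to the boundary of the disc $R$; this is independent of $\omega$ and $b$ and yields the constant $1$. The edge contributions then come from the explicit extensions of Figg.~\ref{numuextension:fig} and~\ref{correction:fig}. A branched edge contributes nothing. Along an unbranched edge of type $\pm1$ the correction replacing $\mu_0$ by $\mu$ inserts exactly one full twist of $\mu$ about $\nu$; tracking how each of the three incident regions runs along the edge, the region lying to the left at both ends (the one whose transverse orientation is made to prevail in the definition of $\nu$) sees a full $2\pi$ twist and contributes $1$, whereas for the other two regions $\nu$ switches sheets along the edge so that only half of the twist is registered, with the sign $\mp\tfrac12$ fixed by the type $\pm1$; this reproduces the edge weights of the statement.

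Next I would read off the vertex (arrow) contributions from the standard frame on the smoothed butterfly (Fig.~\ref{numuonbutterfly:fig}), treating separately the two vertex indices of Fig.~\ref{Wvertices:fig}. Each passage of $\partial R$ through a vertex pivots the frame by a half-turn whose sense is recorded by the arrow of Fig.~\ref{Rmethod:fig}-top; comparing the arrow with the chosen orientation of $\partial R$ assigns it $\pm\tfrac12$, as required. Finally I would establish the two integrality assertions separately. For the arrows this is the parity fact already announced in Remark~\ref{taut:rem}: around any region boundary the half-turn pivots occur in even number (equivalently, an even number of abstract edges of type $v_0v_2$ or $v_1v_3$ project to the dual edge of $\calT$), so the signed sum of the $\pm\tfrac12$ lies in $\zetadue$. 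For the edge weights I would give the parallel pairing argument showing that the $\mp\tfrac12$ contributions of the unbranched edges likewise cancel in pairs modulo $\matZ$. Adding the base $1$, the edge sum and the arrow sum then gives the asserted value of $\alpha(P,\omega,b)(R)$.

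The main obstacle will be the bookkeeping of the two middle steps: obtaining the magnitude $1$ versus $\mp\tfrac12$ on the three regions of an unbranched edge, and fixing the arrow signs, both demand a consistent orientation convention for the spin lift, and this is where sign errors are most likely. Conceptually the genuine point is the integrality step, which is the coherence property that promotes the scattered local half-integers to an honest $\zetadue$-valued cochain; carrying it out by a clean pairing argument, rather than checking cases one at a time, is the most delicate part.
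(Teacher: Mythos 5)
Your overall strategy --- trivialize $TM$ over the disc $R$, cut $\partial R$ into edge-arcs and vertex-arcs, assign each a half-integer contribution, and sum --- is the same decomposition the paper uses, and your identification of the base contribution $1$ (the in-spine part of the frame makes one full turn relative to a trivialization over the disc) is correct. The genuine gap is in the central device: you treat the contribution of each individual arc as a well-defined geometric quantity (``the rotation it sweeps measured in full turns'' in the spin cover). But an arc is a path, not a loop, and homotopy classes of paths rel endpoints in $\textrm{GL}^{+}(3;\matR)$ form a torsor over $\pi_1(\textrm{GL}^{+}(3;\matR))=\zetadue$, not a group: a path that is not a rotation about a fixed axis has no intrinsic ``number of turns'', so the half-integer attached to a single passage is a convention, not a measurement. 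The paper states this obstacle explicitly (``the individual $V_i,\overline{V}_i,E_j,\overline{E}_j$ do not make sense in $\pi_1$'') and resolves it algebraically: one postulates unknown values $\pm\frac12$ for the finitely many local configurations, verifies geometrically the classes of enough closed concatenations (such as $E_1+E_2=1$ and $E_1+E_3=0$), and solves the resulting system of relations. That the individual values are conventional rather than geometric is confirmed by the fact that the system admits several solutions, which is what produces the alternative computation rule of Fig.~\ref{Cmethod:fig}. Your plan never fixes the relative normalization between the vertex (arrow) values and the edge values; this requires a mixed vertex--edge relation such as $E_1+\overline{V}_1+\overline{E}_2+V_2=1$, which is the one genuinely hard homotopy computation of the paper (Fig.~\ref{hardproof:fig}) and has no counterpart in your outline. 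Without some such relation your sign conventions for the arrows and for the $\mp\frac12$ edge weights could be mutually inconsistent, and the resulting formula would be wrong by $1$ on some regions.

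A second, smaller problem is circularity in the integrality step: Remark~\ref{taut:rem} does not prove the parity of the number of arrow passages along $\partial R$ --- it explicitly defers that claim to Proposition~\ref{real:obstruction:computation:prop}, i.e.\ to the very statement you are proving, so you cannot quote it. The parities must be derived independently. In the paper, the evenness of the number of $\pm\frac12$ edge contributions is immediate (each such passage switches $\nu$ between the up and down positions, and along a closed loop the number of switches is even), while the evenness of the number of arrow passages is extracted from the list of admissible concatenations of local configurations. Your proposed ``pairing argument'' would have to reproduce exactly this combinatorial analysis; as stated it is a gesture rather than a proof.
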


\begin{proof}
Recall first that $\varphi(P,\omega,b)=(\nu,\mu)$ is obtained from $(\nu,\mu_0)$ by adding a full
twist to $\mu_0$ along the edges of $P$ having colour $\pm 1$. It is then sufficient to show
that the obstruction $\alpha_0$ to extending $(\nu,\mu_0)$ is computed by decorating
the regions of $P$ as in Fig.~\ref{Rmethod:fig}-top near the vertices and as in
Fig.~\ref{reducedRmethodedges:fig} near the edges.
\begin{figure}
    \begin{center}
    \includegraphics[scale=.6]{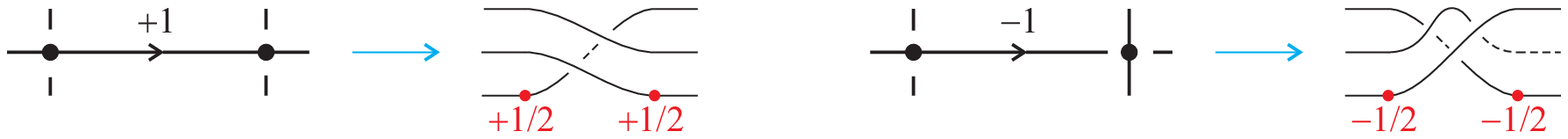}
    \end{center}
\vspace{-.5cm}\mycap{Reduced decoration near edges, used to compute $\alpha_0$.\label{reducedRmethodedges:fig}}
\end{figure}

Let us now pick a region $R$, give it some orientation, and compute $\alpha_0(R)$. Thanks to the orientation of $R$ and
of the ambient manifold $M$, for a vector at some point of $\partial R$ the positions shown in Fig.~\ref{positions:fig}
are well-defined.
\begin{figure}
    \begin{center}
    \includegraphics[scale=.6]{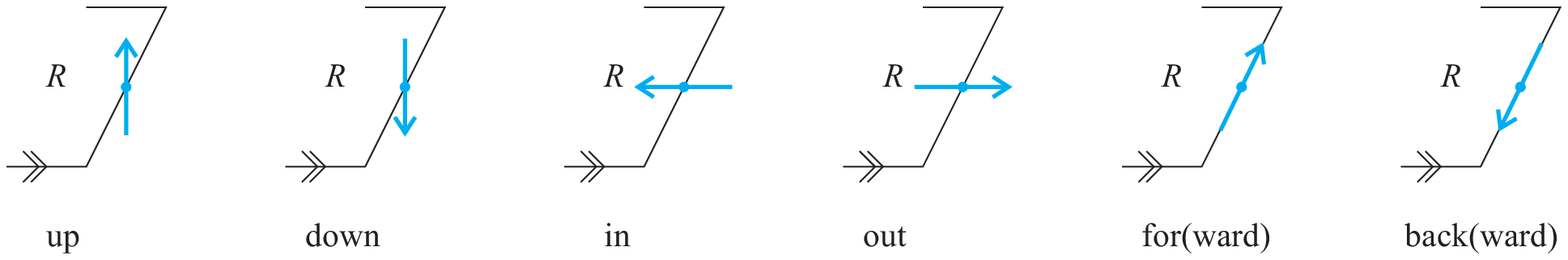}
    \end{center}
\vspace{-.5cm}\mycap{Positions of a vector on the boundary of an oriented region.\label{positions:fig}}
\end{figure}
We now analyze how the positions of $\nu$ and $\mu_0$ change as $\partial R$ travels near a vertex or edge of $P$.

From Fig.~\ref{numuonbutterfly:fig} one sees that $(\nu,\mu_0)$ does not change at a vertex except if
$\partial R$ is in one of the two positions indicated by arrows in Fig.~\ref{Rmethod:fig}-top
(the sink and the source quadrants of the vertex); for these positions,
we have $4$ different possibilities, two as follows
\begin{center}
\begin{tabular}{c|c||c|c}
&Position of $R$&&Position of $R$\\ \hline
$V_1$&Sink quadrant of vertex&$V_2$&Source quadrant of the vertex
\end{tabular}
\end{center}
with  $\partial R$ oriented as the arrow in Fig.~\ref{Rmethod:fig}-top, and two more
$\overline{V}_1$ and $\overline{V}_2$ with opposite orientation of $\partial R$;
the corresponding changes of $\nu$ and $\mu_0$ are
\begin{center}
\begin{tabular}{c|c|c||c|c|c}
&$\Delta\nu$&$\Delta\mu_0$&&$\Delta\nu$&$\Delta\mu_0$\\ \hline\hline
$V_1$&\small{up$\to$up$\to$up}&\small{out$\to$back$\to$in}&
$\overline{V}_1$&\small{down$\to$down$\to$down}&\small{in$\to$for$\to$out}\\ \hline
$V_2$&\small{up$\to$up$\to$up}&\small{in$\to$for$\to$out}&
$\overline{V}_2$&\small{down$\to$down$\to$down}&\small{out$\to$back$\to$in}
\end{tabular}
\end{center}
and this description applies whatever the index of the vertex.

Turning to $\Delta(\nu,\mu_0)$ along an edge $e$,
of course nothing happens if $e$ is branched or $e$ is unbranched but $R$ is in
position $A$ in Fig.~\ref{numuextension:fig};
otherwise we have $8$ possibilities, $4$ with $\partial R$ concordant with $e$ and $R$
in the following position
\begin{center}
\begin{tabular}{c|c||c|c}
&Position of $R$&&Position of $R$\\ \hline\hline
$E_1$&$B$ in Fig.~\ref{numuextension:fig}-left&$E_2$&$C$ in Fig.~\ref{numuextension:fig}-left\\ \hline
$E_3$&$B$ in Fig.~\ref{numuextension:fig}-right&$E_4$&$C$ in Fig.~\ref{numuextension:fig}-right
\end{tabular}
\end{center}
and $4$ more $\overline{E}_j$ with $\partial R$ discordant with $e$;
the corresponding $\Delta(\nu,\mu_0)$ is
\begin{center}
\begin{tabular}{c|c|c||c|c|c}
&$\Delta\nu$&$\Delta\mu_0$&&$\Delta\nu$&$\Delta\mu_0$   \\ \hline\hline
\!$E_1$\!&\small{up$\to$in$\to$down}&\small{out$\to$up$\to$in}&\!$\overline{E}_1$\!&\small{up$\to$in$\to$down}&\small{in$\to$down$\to$out}  \\ \hline
\!$E_2$\!&\small{down$\to$out$\to$up}&\small{in$\to$down$\to$out}&\!$\overline{E}_2$\!&\small{down$\to$out$\to$up}&\small{out$\to$up$\to$in}  \\ \hline
\!$E_3$\!&\small{down$\to$in$\to$up}&\small{in$\to$up$\to$out}&\!$\overline{E}_3$\!&\small{down$\to$in$\to$up}&\small{out$\to$down$\to$in}  \\ \hline
\!$E_4$\!&\small{up$\to$out$\to$down}&\small{out$\to$down$\to$in}&\!$\overline{E}_4$\!&\small{up$\to$out$\to$down}&\small{in$\to$up$\to$out.}
\end{tabular}
\end{center}
The value of $\alpha_0(R)$ will be given in $\pi_1(\textrm{GL}^{+}(3;\matR))=\zetadue=\{0,1\}$ by $1$ plus
some contribution of each configuration $V_i,\overline{V}_i,E_j,\overline{E}_j$, but:
\begin{itemize}
\item The $V_i,\overline{V}_i,E_j,\overline{E}_j$ cannot appear in arbitrary order: only some concatenations are possible;
\item The individual $V_i,\overline{V}_i,E_j,\overline{E}_j$ do not make sense in $\pi_1(\textrm{GL}^{+}(3;\matR))$
but some of their concatenations do, when $(\nu,\mu_0)$ is the same at the two ends of the configuration.
\end{itemize}
The idea of the proof is then to assign to each $V_i,\overline{V}_i,E_j,\overline{E}_j$ a value $\pm{\textstyle{\frac12}}$ so
that, whatever concatenation is possible and makes sense in $\pi_1(\textrm{GL}^{+}(3;\matR))$, its
geometrically correct value in $\pi_1(\textrm{GL}^{+}(3;\matR))$ is the sum of the values of the
$V_i,\overline{V}_i,E_j,\overline{E}_j$ appearing in it.
Turning to the details, the possible concatenations are
\begin{equation}\label{possible:conc:eq}
\begin{aligned}
\left\{V_1,\overline{E}_2,\overline{E}_3\right\}+\left\{V_2,\overline{E}_1,\overline{E}_4\right\} & & &
\left\{V_2,E_2,E_3\right\}+\left\{V_1,E_1,E_4\right\}\\
\left\{\overline{V}_1,\overline{E}_1,\overline{E}_4\right\}+\left\{\overline{V}_2,\overline{E}_2,\overline{E}_3\right\} & & &
\left\{\overline{V}_2,E_1,E_4\right\}+\left\{\overline{V}_1,E_2,E_3\right\}.
\end{aligned}
\end{equation}
and some concatenations that readily make sense in $\pi_1(\textrm{GL}^{+}(3;\matR))$ are
\begin{equation*}
\begin{array}{ccc}
V_1+V_2=V_2+V_1=1 && \overline{V}_1+\overline{V}_2=\overline{V}_2+\overline{V}_1=1 \\
E_1+E_2=E_2+E_1=1 && \overline{E}_1+\overline{E}_2=\overline{E}_2+\overline{E}_1=1 \\
E_1+E_3=E_3+E_1=0 && \overline{E}_1+\overline{E}_3=\overline{E}_3+\overline{E}_1=0 \\
E_2+E_4=E_4+E_2=0 && \overline{E}_2+\overline{E}_4=\overline{E}_4+\overline{E}_2=0 \\
E_3+E_4=E_4+E_3=1 && \overline{E}_3+\overline{E}_4=\overline{E}_4+\overline{E}_3=1;
\end{array}
\end{equation*}
see for instance Fig.~\ref{E1plusE2:fig}
\begin{figure}
    \begin{center}
    \includegraphics[scale=.6]{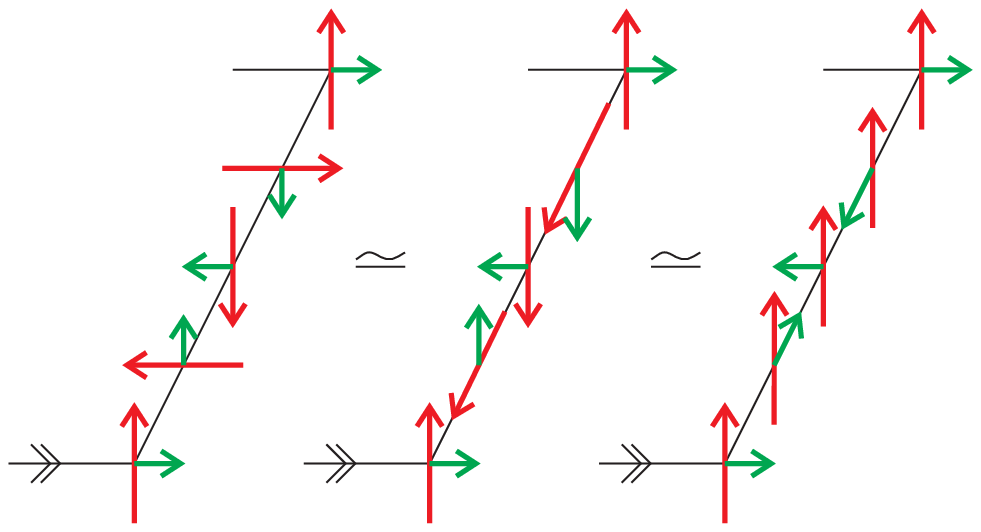}
    \end{center}
\vspace{-.5cm}\mycap{Proof that $E_1+E_2=1$.\label{E1plusE2:fig}}
\end{figure}
for $E_1+E_2=1$, where the concatenation is shown on the left and then homotoped to $1\in\pi_1(\textrm{GL}^{+}(3;\matR))$.

These relations (subject to the condition that all
$V_i,\overline{V}_i,E_j,\overline{E}_j$ should be assigned  $\pm{\textstyle{\frac12}}$ as a value) are equivalent to
\begin{equation}\label{first:values:eq}
\begin{array}{ccc}
V_1=V_2 && \overline{V}_1=\overline{V}_2 \\
E_1=E_2=-E_3=-E_4 && \overline{E}_1=\overline{E}_2=-\overline{E}_3=-\overline{E}_4
\end{array}\end{equation}
(note that the relations $E_4=-E_1$, $E_3=-E_2$, $\overline{E}_4=-\overline{E}_1$, $\overline{E}_3=-\overline{E}_2$
come from the algebra but they are also geometrically clear). We now claim that
\begin{equation*}\label{hard:equality:eq}
E_1+\overline{V}_1+\overline{E}_2+V_2=1
\end{equation*}
which is proved in Fig.~\ref{hardproof:fig}.
\begin{figure}
    \begin{center}
    \includegraphics[scale=.6]{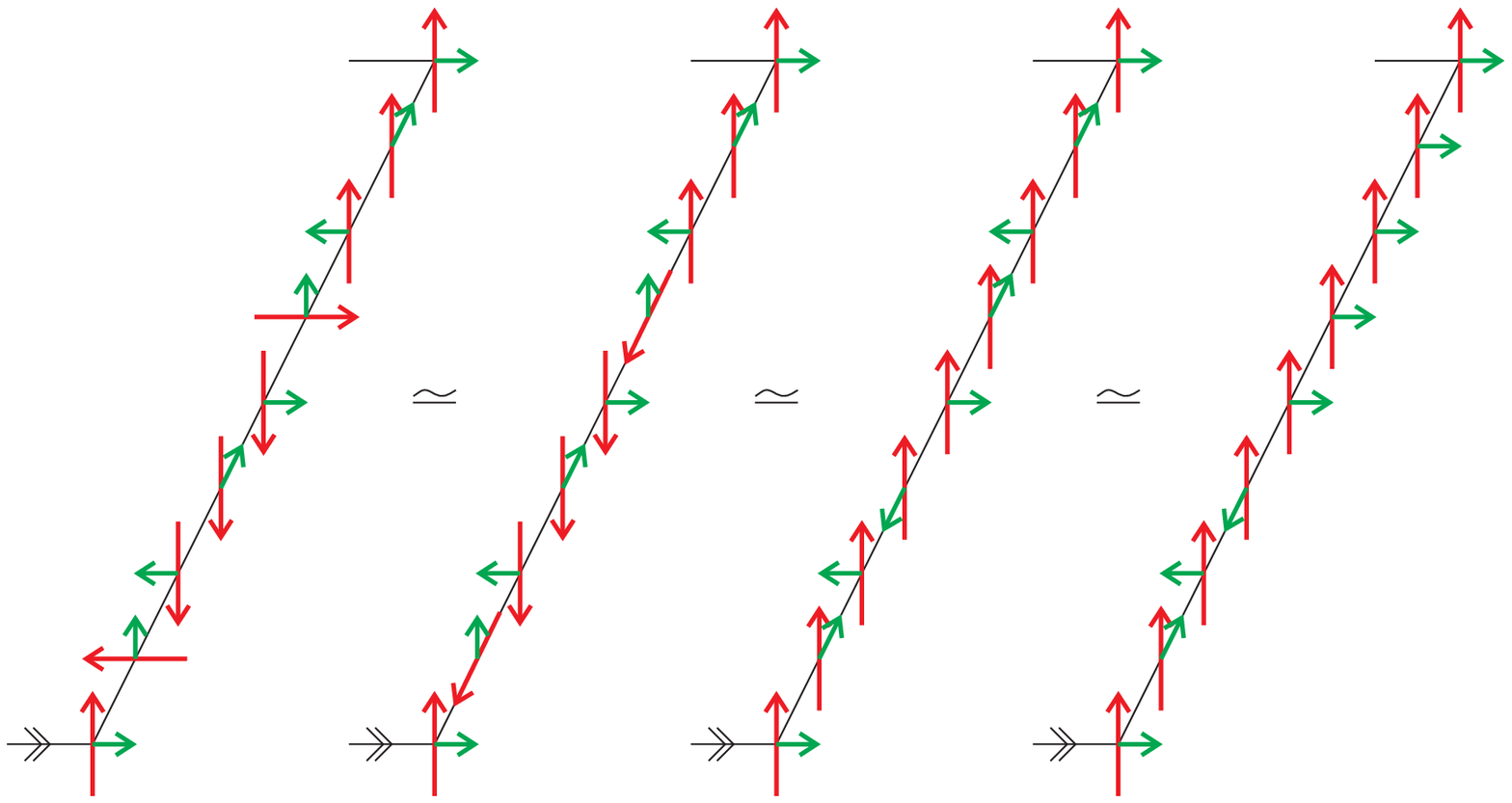}
    \end{center}
\vspace{-.5cm}\mycap{A concatenation giving $1\in\pi_1(\textrm{GL}^{+}(3;\matR))$.\label{hardproof:fig}}
\end{figure}
Taking into account~(\ref{first:values:eq}) the last condition is equivalent to any one of the following
\begin{equation}\label{second:values:eq}
\begin{array}{ccc}
V_1=E_1=\overline{E}_1=-\overline{V}_1 & \qquad & V_1=E_1=\overline{V}_1=-\overline{E}_1\\
\overline{V}_1=E_1=\overline{E}_1=-V_1 & \qquad & V_1=\overline{E}_1=\overline{V}_1=-E_1.
\end{array}
\end{equation}
Choosing one of the relations~(\ref{second:values:eq}) and combining it with~(\ref{first:values:eq})
one can now compute the correct value
of any possible concatenation according to~(\ref{possible:conc:eq}).
Let us now choose $V_1=E_1=\overline{E}_1=+{\textstyle{\frac12}}$ and $\overline{V}_1=-{\textstyle{\frac12}}$, and note that
the concatenation rules~(\ref{possible:conc:eq}) imply that the total number of
$V_1,V_2,\overline{V}_1,\overline{V}_2$ found along $\partial R$ is even (see also below).
The desired computation rule and the last assertion of the statement easily follow.
\end{proof}

\subsection{Remarks on the computation of the obstruction}
At the end of the proof of Proposition~\ref{real:obstruction:computation:prop} one can also choose
$V_1=\overline{V}_1=E_1=+{\textstyle{\frac12}}$ and $\overline{E}_1=-{\textstyle{\frac12}}$, which implies that
$\alpha(P,\omega,b)$ can be also computed by decorating the attaching circles of the regions as in Fig.~\ref{Cmethod:fig}.
\begin{figure}
    \begin{center}
    \includegraphics[scale=.6]{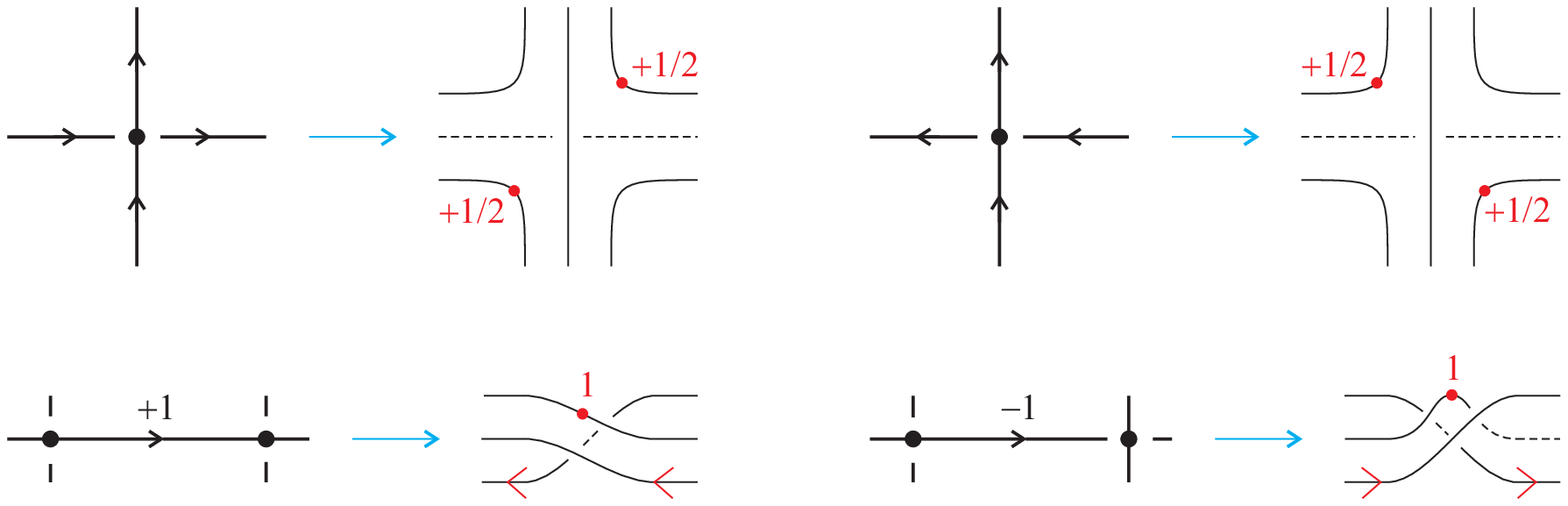}
    \end{center}
\vspace{-.5cm}\mycap{Alternative method to compute $\alpha(P,\omega,b)$.\label{Cmethod:fig}}
\end{figure}
More generally,
if we indicate by $c_i,\overline{c_i}$ the number of configurations $C_i,\overline{C_i}$ along $\partial R$, we have
that $\alpha_0(R)$ is equal to $1$ plus
\begin{eqnarray*}
& & {\textstyle{\frac12}}\left(
    +\left(v_1+v_2\right)
    +\left(\overline{v}_1+\overline{v}_2\right)
    +\left(e_1+e_2-e_3-e_4\right)
    -\left(\overline{e}_1+\overline{e}_2-\overline{e}_3-\overline{e}_4\right)
        \right)\\
&=& {\textstyle{\frac12}}\left(
    +\left(v_1+v_2\right)
    +\left(\overline{v}_1+\overline{v}_2\right)
    -\left(e_1+e_2-e_3-e_4\right)
    +\left(\overline{e}_1+\overline{e}_2-\overline{e}_3-\overline{e}_4\right)
        \right)\\
&=& {\textstyle{\frac12}}\left(
    +\left(v_1+v_2\right)
    -\left(\overline{v}_1+\overline{v}_2\right)
    +\left(e_1+e_2-e_3-e_4\right)
    +\left(\overline{e}_1+\overline{e}_2-\overline{e}_3-\overline{e}_4\right)
        \right)\\
&=& {\textstyle{\frac12}}\left(
    -\left(v_1+v_2\right)
    +\left(\overline{v}_1+\overline{v}_2\right)
    +\left(e_1+e_2-e_3-e_4\right)
    +\left(\overline{e}_1+\overline{e}_2-\overline{e}_3-\overline{e}_4\right)
        \right)
\end{eqnarray*}
and these expressions are recognized to be equivalent to each other because
$$\begin{array}{ccc}
v_1+v_2+\overline{e}_1+\overline{e}_2+\overline{e}_3+\overline{e}_4 & \qquad & v_1+v_2+e_1+e_2+e_3+e_4\\
\overline{v}_1+\overline{v}_2+\overline{e}_1+\overline{e}_2+\overline{e}_3+\overline{e}_4 & \qquad &
\overline{v}_1+\overline{v}_2+e_1+e_2+e_3+e_4
\end{array}$$
are all even numbers, thanks
to~(\ref{possible:conc:eq}). This implies that
$v_1+v_2+\overline{v}_1+\overline{v}_2$ is also even (as noted above), and
$e_1+e_2+e_3+e_4+\overline{e}_1+\overline{e}_2+\overline{e}_3+\overline{e}_4$
is even as well (which is clear, since it counts the number of up/down switches of $\nu$).

\begin{rem}\label{additivity:rem}
\emph{The main reason why we have defined $\varphi(P,\omega,b)=(\nu,\mu)$ not as $(\nu,\mu_0)$, but rather adding a full
twist to $\mu_0$ along unbranched edges, was to have additivity of the frames with respect to edge-fusion,
as explained in Proposition~\ref{numuadditivity:prop}. Coherently with this we now have that the obstruction
$\alpha(P,\omega,b)$ is also additive, namely it can be computed at the level of the graphs in $\calNtil$,
which would be false for $\alpha_0$.
Two examples of additivity (that again holds independently of the interpretation
of the $2$-valent vertices) are shown in Fig.~\ref{additivity:fig}.
\begin{figure}
    \begin{center}
    \includegraphics[scale=.6]{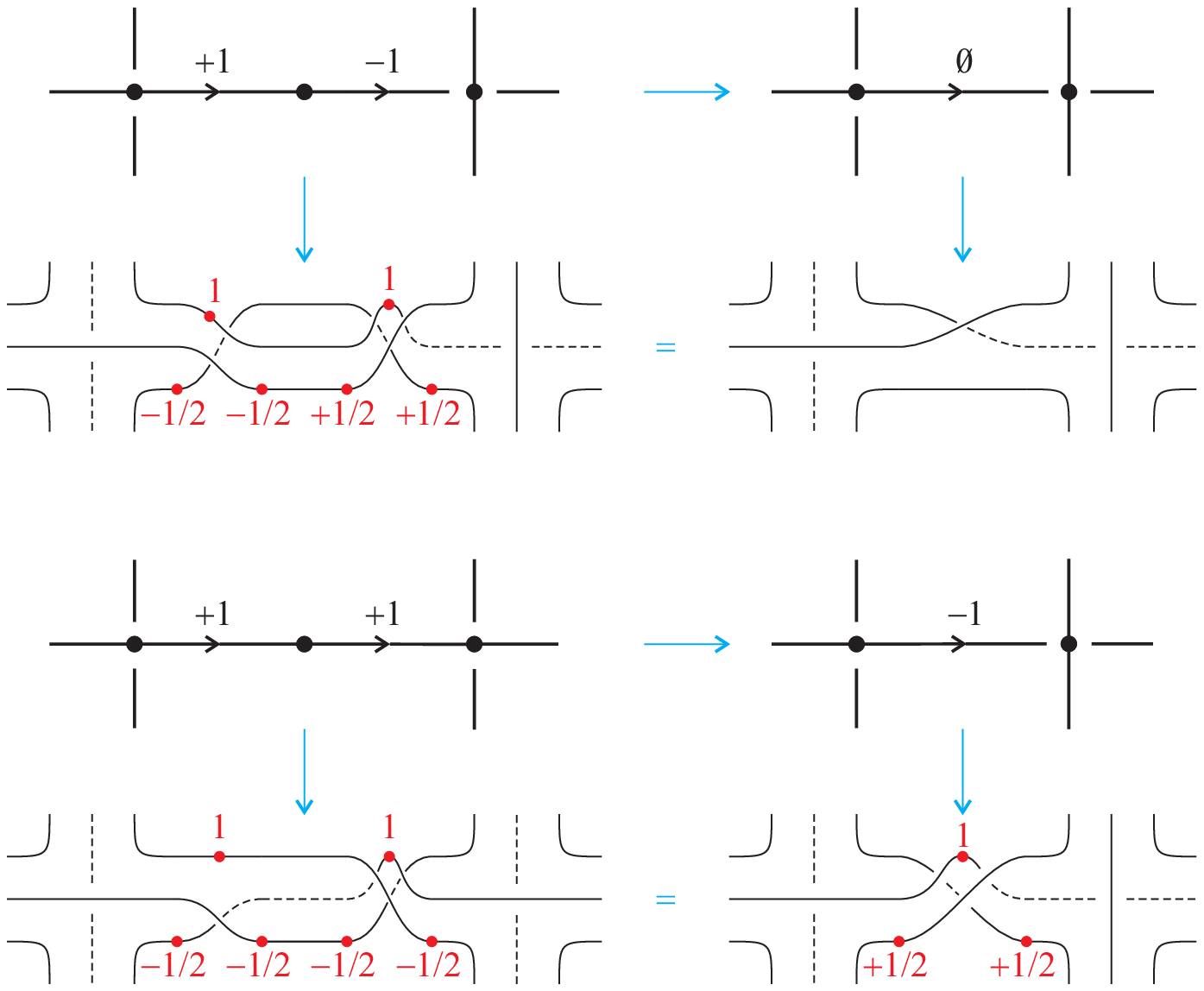}
    \end{center}
\vspace{-.5cm}\mycap{Additivity of the computation of $\alpha$.\label{additivity:fig}}
\end{figure}}
\end{rem}

\begin{rem}\label{sign}
\emph{Extending results of~\cite{BB2,BB3}, in~\cite{BBnew}
certain quantum hyperbolic invariants $\mathcal H_N(\mathcal P)$ have been
constructed for a variety of \emph{patterns} $\mathcal P$, with $N\geq 3$ an odd integer.
A \emph{pattern} consists of an oriented compact $3$-manifold $M$ with
(possibly empty) toric boundary, and an elaborated extra
structure on $M$, which includes a $\textrm{PSL}(2,\matC)$-character. Each
invariant is computed as a state sum over a suitably decorated weakly branched
triangulation of $M$ with some number $k$ of punctures, and it is
well-defined up to a phase {\it anomaly}.  Namely, for $N\equiv1\ (\text{mod}\ 4)$
up to multiplication by an $N$-th root of unity,
while for $N\equiv3\ (\text{mod}\ 4)$ up to multiplication by an $N$-th root of unity {\it and} a sign.
And it turns our that in the latter case the sign ambiguity can be removed by multiplying the state sum by
$(-1)^{k-\alpha(P,\omega,b)([P])}$, where
$(P,\omega,b)$ is the weakly branched spine dual to the triangulation, and
$[P]\in C_2(P;\zetadue)$ is the sum of all the regions of $P$.}
\end{rem}

\subsection{Spin structures from cochains}

We close this section with a result that dualizes to Proposition~\ref{alpha:tria:prop}.

\begin{prop}\label{coboundary:use:prop}
The class of $\alpha=\alpha(P,\omega,b)$ vanishes in $H^2\left(P;\zetadue\right)$.
For every $\beta\in C^1\left(P;\zetadue\right)$ such that $\delta\beta=\alpha$
a spin structure $s(P,\omega,b,\beta)$ is well-defined as the homotopy class
of the frame $(\nu,\beta(\mu))$ on $S(P)$, where $(\nu,\mu)=\varphi(P,\omega,b)$ and $\beta(\mu)$ is obtained
by giving a full twist to $\mu$
along all the edges $e$ of $P$ such that $\beta(e)=1$. Moreover
$s(P,\omega,b,\beta_0)=s(P,\omega,b,\beta_1)$ if and only if $\beta_0+\beta_1$
vanishes in $H^1\left(P;\zetadue\right)$.
\end{prop}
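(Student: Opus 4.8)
The plan is to read Proposition~\ref{coboundary:use:prop} entirely through the lens of obstruction theory, using that the frame $\varphi(P,\omega,b)=(\nu,\mu)$ trivializes $TM$ along $S(P)$ (the orientation of $M$ supplies the missing third vector), so that $\varphi$ is a section over the $1$-skeleton of the $2$-complex $P$ whose $2$-cells are the regions. By construction $\alpha=\alpha(P,\omega,b)\in C^2(P;\zetadue)$ is precisely the primary obstruction to extending this section over the $2$-cells, its value on a region $R$ being the class in $\pi_1(\textrm{GL}^{+}(3;\matR))=\zetadue$ of the loop $\varphi|_{\partial R}$; since $P$ is $2$-dimensional, $\alpha$ is automatically a cocycle and $H^2(P;\zetadue)=C^2(P;\zetadue)/\delta C^1(P;\zetadue)$, so the first assertion amounts to showing that $\alpha$ is a coboundary.

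For that first assertion I would invoke the parallelizability of oriented $3$-manifolds: $TM$ admits a global frame $\psi$, defined in particular over all of $P$. After a homotopy over the vertices of $P$ (possible since $\textrm{GL}^{+}(3;\matR)$ is connected) I may assume $\psi$ agrees with $\varphi$ on the $0$-skeleton, and then form the difference cochain $c\in C^1(P;\zetadue)$. Standard difference-cochain calculus gives $\alpha=o(\varphi)=o(\psi)+\delta c$, and $o(\psi)=0$ because $\psi$ already extends over $P$; hence $\alpha=\delta c$ and $[\alpha]=0$ in $H^2(P;\zetadue)$.

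For the second assertion, the key observation is that replacing $\mu$ by $\beta(\mu)$---adding a full twist of $\mu$ around $\nu$ along each edge $e$ with $\beta(e)=1$---modifies the trivialization over the $1$-skeleton exactly by the cochain $\beta$, a full twist being the generator of $\pi_1(\textrm{GL}^{+}(3;\matR))=\zetadue$. Consequently the obstruction of $(\nu,\beta(\mu))$ is $\alpha+\delta\beta$, which equals $\alpha+\alpha=0$ in $\zetadue$ precisely because $\delta\beta=\alpha$. Thus $(\nu,\beta(\mu))$ extends over every region of $P$; since $\pi_2(\textrm{GL}^{+}(3;\matR))=0$ there is no further obstruction, so it trivializes $TM$ over all of $P$. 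Through the collapse of $M$ minus balls onto $P$, and the unique extension of a spin structure over each capping ball $D^3$, this yields a spin structure on $M$, well-defined because $\varphi(P,\omega,b)$ is well-defined up to homotopy on $S(P)$ and the spin structure is by definition the homotopy class of $(\nu,\beta(\mu))$ there.

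Finally, for the third assertion I would compare $(\nu,\beta_0(\mu))$ and $(\nu,\beta_1(\mu))$ directly: their difference cochain over $S(P)$ is $\beta_0+\beta_1$, which is a cocycle since $\delta(\beta_0+\beta_1)=\alpha+\alpha=0$, and by the standard fact that spin structures form a torsor over $H^1(P;\zetadue)\cong H^1(M;\zetadue)$ the two coincide if and only if $[\beta_0+\beta_1]=0$ in $H^1(P;\zetadue)$. I expect the only genuinely delicate point to be the bookkeeping in the second assertion: verifying cleanly that the geometric twist $\mu\mapsto\beta(\mu)$ is exactly the difference-cochain operation realizing $\beta$, and that the passage from frames on the $2$-complex $P$ to honest spin structures on the closed-up $M$ is faithful; the mod-$2$ algebra is then immediate, and the evenness statements of Proposition~\ref{real:obstruction:computation:prop} guarantee throughout that $\alpha$ really is a $\zetadue$-valued cochain.
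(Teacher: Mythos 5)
Your proposal is correct and follows essentially the same route as the paper: both treat $\alpha$ as the primary obstruction to extending the frame over the regions, prove $[\alpha]=0$ by comparing $\varphi$ with a frame that does extend over $P$ (you use a global parallelization of $M$, the paper an arbitrary spin structure restricted to $S(P)$) via the difference cochain, and read off the second and third assertions from standard difference-cochain calculus. The only substantive difference is that where you cite the torsor property of spin structures over $H^1\left(M;\zetadue\right)$ as a black box for the third assertion, the paper proves both directions directly (coboundaries $\delta\widehat{v}$ give homotopies supported near vertices, and conversely a homotopy on $S(P)$ evaluated at the vertices yields $\gamma\in C^0\left(P;\zetadue\right)$ with $\beta_1=\beta_0+\delta\gamma$), which is exactly the verification that the difference class you invoke is well defined and detects equality.
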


\begin{proof}
All three assertions are general topological facts.
To prove the first one, let
$(\overline{\nu},\overline{\mu})$ be any given spin structure on $M$,
namely a frame on $S(P)$ that extends to $P$ and is viewed up to homotopy on $S(P)$.
Homotoping $(\overline{\nu},\overline{\mu})$ we can suppose it coincides with $(\nu,\mu)$
at the vertices of $P$, so we can define $\beta\in C^1\left(P;\zetadue\right)$ where
$\beta(e)$ is the difference between $(\nu,\mu)$ and $(\overline{\nu},\overline{\mu})$ along $e$.
Since the obstruction to extending $(\overline{\nu},\overline{\mu})$ to a region $R$ of $P$
vanishes, we see that the obstruction $\alpha(R)$ to extending $(\nu,\mu)$ to $R$ is the
the sum of $\beta(e)$ for all the edges $e$ of $P$ contained in $\partial R$, namely
$\delta\beta=\alpha$.

The second assertion is now easy: if $\delta\beta=\alpha$ then the obstruction to
extending $(\nu,\beta(\mu))$ to $P$ vanishes.

Turning to the third assertion, it is first of all evident that if $v$ is a vertex of $P$ and
$\widehat{v}\in C^0\left(P;\zetadue\right)$ is its dual then the frames
on $S(P)$ carried by some $\beta$ with $\delta\beta=\alpha$ and by $\beta+\delta\widehat{v}$ are
homotopic on $S(P)$, with homotopy supported near $v$. Conversely, suppose
$\beta_0,\beta_1$ with $\delta\beta_0=\delta\beta_1=\alpha$ give frames $\left(\nu^{(0)},\mu^{(0)}\right)$
and $\left(\nu^{(1)},\mu^{(1)}\right)$ that are homotopic on $S(P)$ via
$\left(\nu^{(t)},\mu^{(t)}\right)_{t\in[0,1]}$. If $v$ is a vertex of $P$, by construction
$\left(\nu^{(0)},\mu^{(0)}\right)$ equals
$\left(\nu^{(1)},\mu^{(1)}\right)$ at $v$, so we can view $\left(\nu^{(t)},\mu^{(t)}\right)_{t\in[0,1]}$ at
$v$ as an element $\gamma(v)$ of $\pi_1(\textrm{GL}^{+}(3;\matR))=\zetadue$.
We then have $\gamma\in C^0\left(P;\zetadue\right)$ and $\beta_1=\beta_0+\delta\gamma$, whence the conclusion.
\end{proof}

Note that the previous result
is coherent with the known fact that the set of spin structures on $M$ is an affine space
over $H^1\left(P;\zetadue\right)=H^1\left(M;\zetadue\right)$.

\section{Spine moves preserving the spin structure}\label{moves:sec}
We will establish in this section the dual versions of
Propositions~\ref{vert:moves:tria:prop} to~\ref{tria:change:prop}.
From now on we will regard any $\beta\in C^1\left(P;\zetadue\right)$ such
that $\delta\beta=\alpha(P,\omega,b)$ up to coboundaries.
To discuss when two quadruples $(P,\omega,b,\beta)$ define the same
$s(P,\omega,b,\beta)$ we can then describe right to left how the quadruple
must change, and we have already dealt with the change of $\beta$.

Before proceeding further we introduce a convenient graphic encoding for the
quadruples $(P,\omega,b,\beta)$. Namely we define
$\calN_{\textrm{w}}$ as the set
of all graphs $\Gamma$ as in $\calN$, with the extra structure of \emph{weight} in $\zetadue$ attached to each edge of $\Gamma$.
A natural correspondence between $\calN_{\textrm{w}}$ and the set of all
quadruples $(P,\omega,b,\beta)$,
with $(P,\omega,b)$ as in Proposition~\ref{W:bijection:prop} and $\beta\in C^1\left(P;\zetadue\right)$,
is obtained by interpreting the weight of an edge as the value
of $\beta$ on it. Note that for $\Gamma\in\calN_{\textrm{w}}$ the edge colours
belong to $\zetatre=\{\emptyset,+1,-1\}$ and the weights to $\zetadue=\{0,1\}$, so
no confusion between colours and weights is possible. Colours $\emptyset$ and weights $0$
will often be omitted. We can similarly define $\calNtil_{\textrm{w}}$ as the set of
graphs in $\calNtil$ with weights in $\zetadue$ attached to the edges,
stipulating that weights sum up in $\zetadue$ when two edges are fused together.

\medskip

\subsection{The vertex moves}
The next result dualizes to Proposition~\ref{vert:moves:tria:prop}.

\begin{prop}\label{vertex:moves:real:prop}
Two graphs in $\calN_{\text{\emph{w}}}$ define quadruples $(P_j,\omega_j,b_j,\beta_j)$ for $j=0,1$ with
$P_1=P_0$, $\omega_1=\omega_0$ and $s(P_0,\omega_0,b_0,\beta_0)=s(P_1,\omega_1,b_1,\beta_1)$ if and only if
they are obtained from each other by repeated applications of the moves $I$ and $\duerom$ of Fig.~\ref{vertexmoves:fig}
\begin{figure}
    \begin{center}
    \includegraphics[scale=.6]{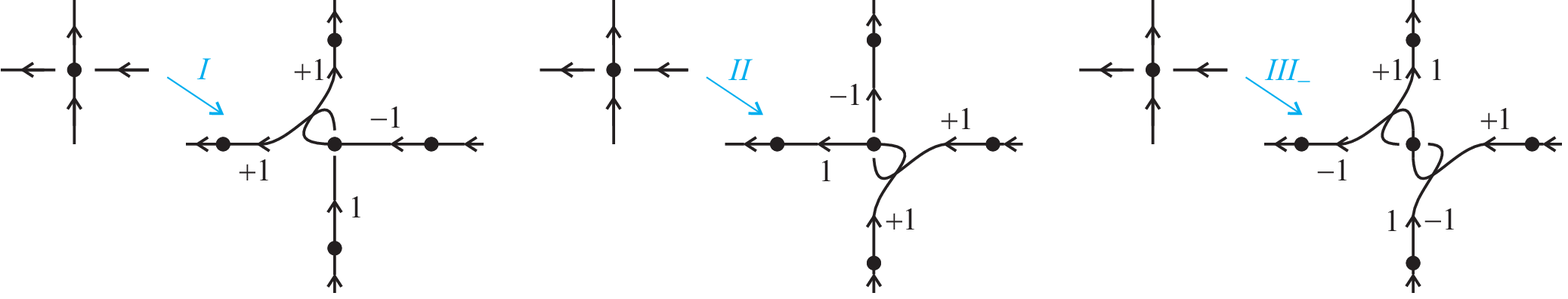}
    \end{center}
\vspace{-.5cm}\mycap{Moves that change the weak branching while preserving the pre-branching and the spin structure.
Recall that $\pm1$ are colours in $\zetatre$ while $1$ is a weight in $\zetadue$. \label{vertexmoves:fig}}
\end{figure}
(and their inverses, followed by the reduction from $\calNtil$ to $\calN$).
\end{prop}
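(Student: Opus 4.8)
The statement is an equivalence, so I plan to treat the two implications separately. For the ``if'' direction I would verify directly that each of the moves $I$ and~$\duerom$ of Fig.~\ref{vertexmoves:fig} preserves the spin structure. Since $P$ and $\omega$ are unchanged and the moves are local at a single vertex of $\Gamma$ (possibly after inserting valence-$2$ vertices on the incident edges and using Propositions~\ref{W:assoc:prop} and~\ref{numuadditivity:prop} to fuse back into $\calN$), this amounts to comparing the frame $(\nu,\beta(\mu))$ on $S(P)$ before and after the move and checking that the two are homotopic. Here the decisive input is the obstruction calculus of Proposition~\ref{real:obstruction:computation:prop}: the colour change on the incident edges, the simultaneous change of weights prescribed by the move, and the full twists encoded by $\beta$ must conspire so that the identity $\delta\beta=\alpha(P,\omega,b)$ is maintained and the resulting frame on $S(P)$ is unchanged up to homotopy in the sense of Proposition~\ref{coboundary:use:prop}.

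For the ``only if'' direction the strategy is to exploit that $P$ and $\omega$ are fixed, so the only data that vary are the weak branching $b$ (for which, over a fixed $\omega$, there are exactly $4$ independent choices at each tetrahedron) and the cochain $\beta$. I would first show that $I$ and~$\duerom$ act transitively on the set of weak branchings compatible with $\omega$. Because the moves are local at a vertex, it suffices to check that at a single tetrahedron they connect the $4$ admissible branchings to one another; transitivity then follows by adjusting $b$ one tetrahedron at a time, each step being a composition of $I$ and~$\duerom$ (and their inverses) together with the induced colour and weight adjustments on the incident edges. Starting from $(P,\omega,b_0,\beta_0)$ we may thus reach $(P,\omega,b_1,\beta_0')$ for some cochain $\beta_0'$.

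At this point the quadruples $(P,\omega,b_1,\beta_0')$ and $(P,\omega,b_1,\beta_1)$ differ only in the cochain, and since the moves preserve the spin structure we have $s(P,\omega,b_1,\beta_0')=s(P,\omega,b_1,\beta_1)$. By the last assertion of Proposition~\ref{coboundary:use:prop} this forces $\beta_0'+\beta_1$ to be a coboundary, say $\delta\gamma$ with $\gamma\in C^1\!\left(P;\zetadue\right)$ replaced by $\gamma\in C^0\!\left(P;\zetadue\right)$. Writing $\gamma=\sum_v\gamma(v)\,\widehat{v}$, it then remains to realize each elementary coboundary $\delta\widehat{v}$ by a composition of moves. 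I would do this by a \emph{round trip} at the vertex $v$: a cyclic sequence of applications of $I$ and~$\duerom$ that runs through the admissible branchings at $v$ and returns to $b_1$, while flipping the weight of each edge incident to $v$, i.e.\ adding exactly $\delta\widehat{v}$ to $\beta$. Composing such round trips yields $\beta\mapsto\beta+\delta\gamma$, which completes the reduction of $(P,\omega,b_0,\beta_0)$ to $(P,\omega,b_1,\beta_1)$.

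The main obstacle I expect is the weight bookkeeping that underlies both directions at once. In the forward direction one must check that the weight changes attached to the moves are precisely those needed to keep $\delta\beta=\alpha(P,\omega,b)$ and to leave $(\nu,\beta(\mu))$ homotopically fixed on $S(P)$; in the converse direction one must verify that the $b$-preserving round trips generate \emph{exactly} the coboundaries $\delta\widehat{v}$ and nothing more, matching the freedom allowed by Proposition~\ref{coboundary:use:prop}. Both hinge on the coherence of the graphic calculus under edge fusion (Propositions~\ref{W:assoc:prop} and~\ref{numuadditivity:prop} together with Remark~\ref{additivity:rem}), so that all the local verifications carried out in $\calNtil$ descend correctly to $\calN$; keeping this coherence under control, rather than any single local homotopy, is where the real work lies.
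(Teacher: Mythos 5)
You follow essentially the same route as the paper: the ``if'' direction is the local check that $I$ and $\duerom$ preserve the frame up to homotopy (the paper localizes via Proposition~\ref{numuadditivity:prop} and compares $(\nu,\mu_0)$ region by region), and the ``only if'' direction combines transitivity of the moves on the branchings compatible with $\omega$ at each single vertex (in the paper, the three possible changes at an index $-1$ vertex are $I$, $\duerom$ and $\trerom_-=I\cdot\overline{\duerom}$, with their inverses acting at index $+1$ vertices) with the last assertion of Proposition~\ref{coboundary:use:prop}.

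The only genuine divergence is your final step, where you want to realize each elementary coboundary $\delta\widehat{v}$ by a ``round trip'' of moves at $v$. This step is superfluous: the paper stipulates at the beginning of Section~\ref{moves:sec} that all cochains $\beta$ with $\delta\beta=\alpha(P,\omega,b)$ are regarded up to coboundaries (the same convention appears before Proposition~\ref{vert:moves:tria:prop} for the dual chains $\overline{\beta}$), so the moment you know that $\beta_0'+\beta_1$ is a coboundary, the two quadruples already coincide and the argument is finished. Moreover, as stated the round-trip claim is unproved: a sequence of moves at $v$ returning to the starting branching changes $\beta$ by a cocycle supported on the four edges incident to $v$, and while Proposition~\ref{well-def:moves:prop} (proved much later, in the more elaborate weighted calculus of Section~\ref{graphic:local:sec}) guarantees that this change is at most a coboundary, you give no argument that some specific round trip produces exactly $\delta\widehat{v}$ rather than $0$. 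Had this step been needed, it would have been a gap; since it is not, deleting it leaves a proof that is complete and agrees with the paper's.
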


\begin{proof}
We must prove that the moves $I$ and $\duerom$ generate all possible changes at a vertex $V$ of  a
weak branching compatible with a given pre-branching and, taking weights into account, that the associated spin structure
is preserved. For both indices $\varepsilon=\pm1$ of $V$ there
are 3 such possible changes; for $\varepsilon=-1$ they are given by the moves $I$, $\duerom$ and
$\trerom_-$ (already shown in Fig.~\ref{vertexmoves:fig}), which can be realized as
$\trerom_-=I\cdot\overline{\duerom}=\duerom\cdot\overline{I}$, with $\overline{I}$ and $\overline{II}$ the
inverses of $I$ and $\duerom$, and products written with the move applied first on the left;
for $\varepsilon=+1$ the 3 possible changes are given by $\overline{I}$, $\overline{\duerom}$
and $\trerom_+=\overline{\duerom}\cdot I=\overline{I}\cdot\duerom$.

It is then sufficient to show that the moves $I$ and $\duerom$ correctly represent one change
of weak branching and preserve the spin structure, which we will do explicitly only for $I$.
Ignoring the frame, the proof that $I$ preserves the pre-branched spine
is contained in Fig.~\ref{vertexproofnew:fig}-left.
\begin{figure}
    \begin{center}
    \includegraphics[scale=.6]{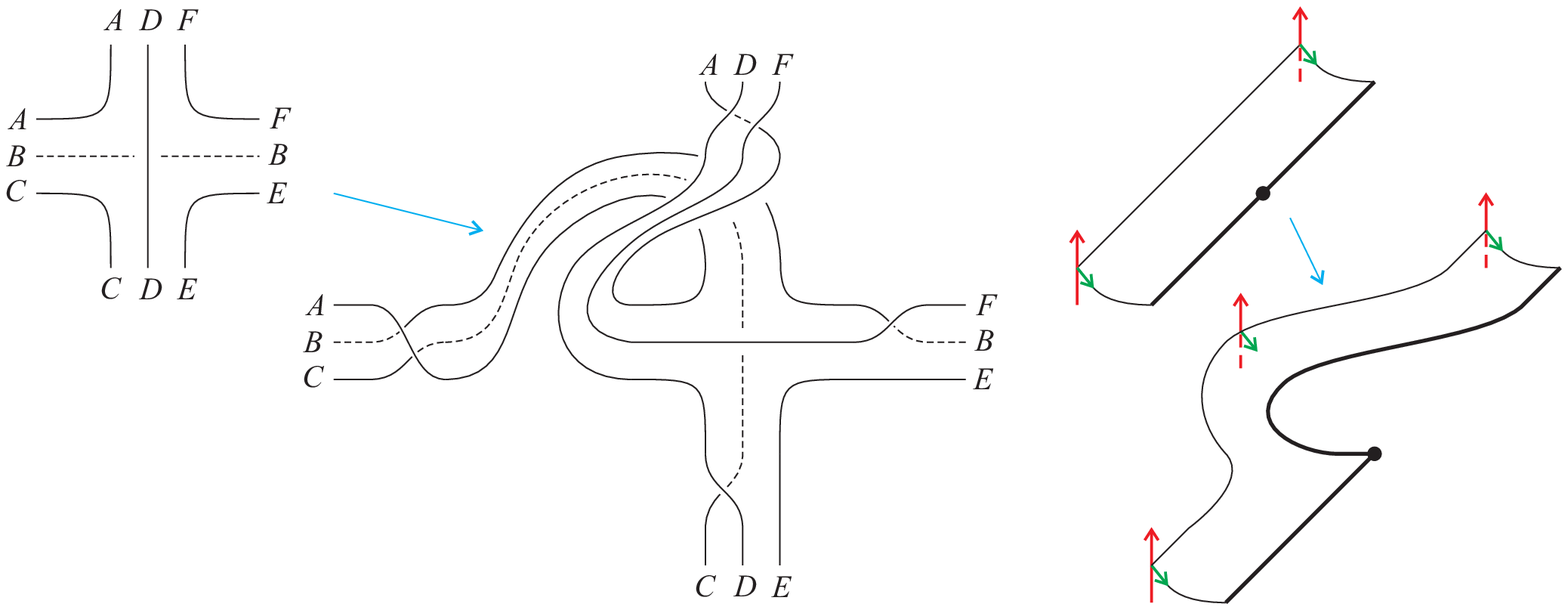}
    \end{center}
\vspace{-.5cm}\mycap{Left: move $I$ preserves the pre-branched spine.
Right: the frame $(\nu,\mu_0)$ is unchanged under move $I$ on the region $D$.\label{vertexproofnew:fig}}
\end{figure}
Turning to the frames, thanks to Proposition~\ref{numuadditivity:prop}, we can carry out a completely
local analysis. Moreover we note that locally before the move the frame $(\nu,\beta(\mu))$ coincides
with $(\nu,\mu_0)$, while after the move the frame $(\nu,\beta(\mu))$ is obtained from
$(\nu,\mu_0)$ by giving a full twist to $\mu_0$ along each of the $4$ involved edges
(three edges have colour $\pm1$ and weight $0$, the fourth edge has colour $\emptyset$ and weight $1$).
These four twists are induced by a homotopy, so it will be enough to show that the frames
$(\nu,\mu_0)$ before and after the move coincide up to homotopy. Showing this on a single global picture is too
complicated, so we confine ourselves to proving that $(\nu,\mu_0)$ is unchanged up to homotopy
separately on the boundary of each of the regions $A,B,C,D,E,F$ of Fig.~\ref{vertexproofnew:fig}-left. This is very easy for all
the regions except $A$, see for instance Fig.~\ref{vertexproofnew:fig}-right
for $D$. In Fig.~\ref{vertexproofframehard:fig}
\begin{figure}
    \begin{center}
    \includegraphics[scale=.6]{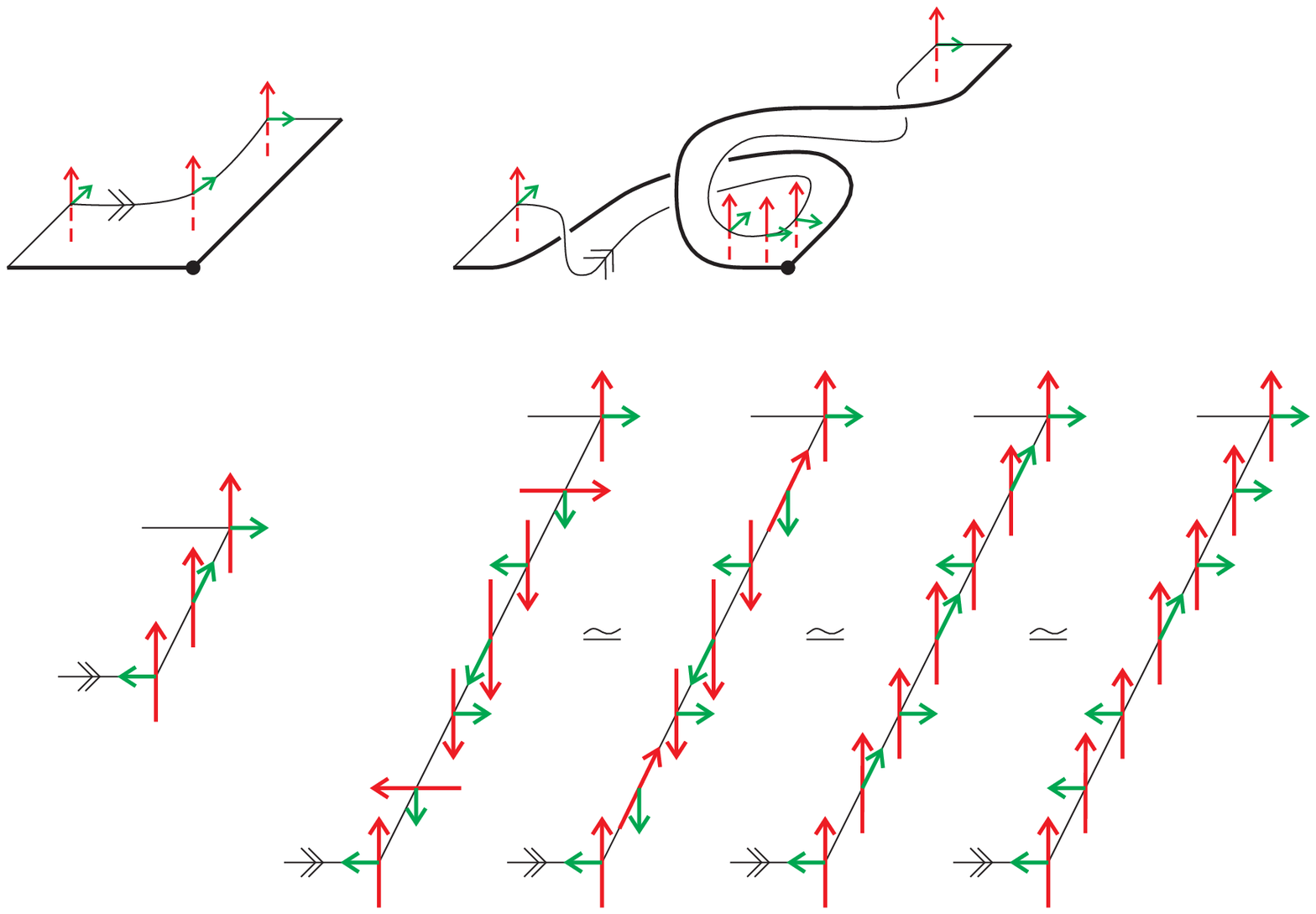}
    \end{center}
\vspace{-.5cm}\mycap{The frame $(\nu,\mu_0)$ is unchanged up to homotopy on $A$.
Left: before the move; right: after the move. Top: locally embedded configuration;
bottom: abstract configuration.\label{vertexproofframehard:fig}}
\end{figure}
we treat instead the case of the region $A$.
\end{proof}

\begin{rem}\label{vertex:obstruction:rem}
\emph{Let the change of weak branching on the pre-branched spine $(P,\omega)$ in move $I$
be given by $b\mapsto b'$.
The difference  $\Delta\alpha=\alpha(P,\omega,b)+\alpha(P,\omega,b')$ is then
computed locally, and Proposition~\ref{vertex:moves:real:prop} implies that $\Delta\alpha=\delta\widehat{e}$,
with $e$ as in Fig.~\ref{vertexobstruction:fig}.
\begin{figure}
    \begin{center}
    \includegraphics[scale=.6]{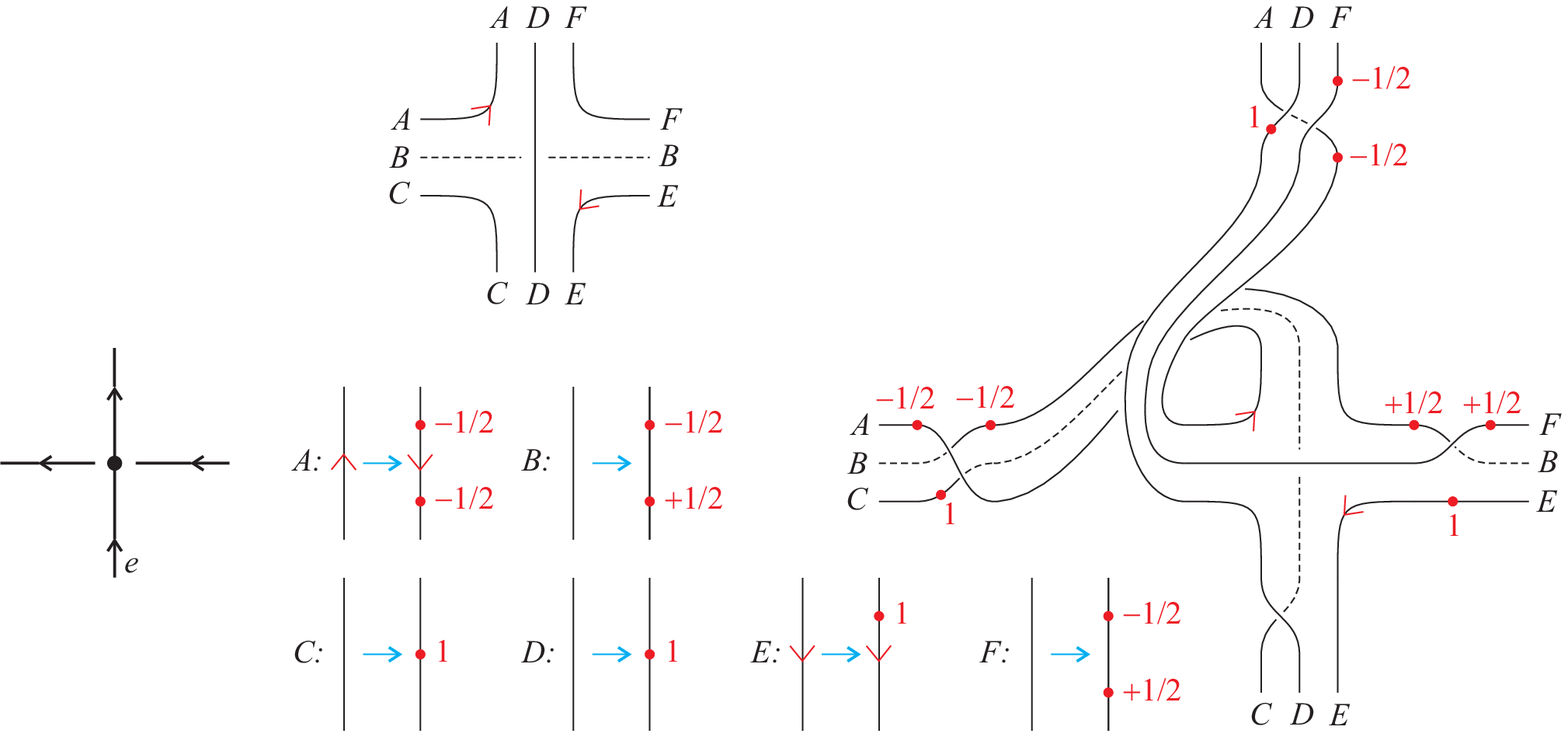}
    \end{center}
\vspace{-.5cm}\mycap{Variation of $\alpha$ with move $I$.\label{vertexobstruction:fig}}
\end{figure}
This fact can actually be checked directly, as in the rest of Fig.~\ref{vertexobstruction:fig},
since the picture shows that $\Delta\alpha$ is $0$ on $A,B,F$ and $1$ on $C,D,E$.}
\end{rem}

\subsection{The circuit move}
The next result dualizes to Proposition~\ref{vert:circuit:prop}.

\begin{prop}\label{circuit:move:prop}
Two graphs in $\calN_{\text{\emph{w}}}$ define quadruples $(P_j,\omega_j,b_j,\beta_j)$ for $j=0,1$ with
$P_1=P_0$ and $s(P_0,\omega_0,b_0,\beta_0)=s(P_1,\omega_1,b_1,\beta_1)$ if and only if
they are related by the moves of Proposition~\ref{vertex:moves:real:prop} plus
moves of the form $\Gamma\mapsto\Gamma'$, where:
\begin{itemize}
\item $\Gamma$ contains a simple oriented circuit $\gamma$ that at all its vertices is an overarc;
\item $\Gamma'$ is obtained from $\Gamma$ by reversing the orientation of the edges in $\gamma$
and adding $1$ to the weights of the edges of $\gamma$ whose ends have distinct indices.
\end{itemize}
\end{prop}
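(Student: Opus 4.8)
The plan is to work entirely with spines and to reduce both implications to (i) the effect of the two moves on the frame $\varphi(P,\omega,b)=(\nu,\mu)$ and (ii) a combinatorial description of how pre-branchings on the fixed polyhedron $P=P_0=P_1$ differ from one another. Throughout I will use that, by Proposition~\ref{vertex:moves:real:prop}, the vertex moves already realize all changes of $(b,\beta)$ that fix $\omega$ and preserve $s$, so the genuinely new content is the passage between distinct pre-branchings.

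For the ``if'' direction only the circuit move needs checking. The branching change prescribed by the move---reversing the edge $v_2v_3$ in each visited tetrahedron---swaps $v_2\leftrightarrow v_3$, hence flips the index of that vertex and reverses the prevailing orientation of exactly the two faces through $v_2v_3$; dually, it reverses $\omega$ precisely on the two edges of $\Gamma$ at the vertex that $\gamma$ traverses. This is why reversing one edge of $\gamma$ compels reversing the whole circuit, and why the move is global. I would then compute $\Delta\alpha=\alpha(P,\omega',b')+\alpha(P,\omega,b)$ region by region using Proposition~\ref{real:obstruction:computation:prop}, exactly in the spirit of Remark~\ref{vertex:obstruction:rem}: only regions whose boundary meets $\gamma$ are affected, and a local inspection at each overarc vertex and each reversed edge shows that $\Delta\alpha$ equals the coboundary of the $1$-cochain taking value $1$ on the edges of $\gamma$ joining tetrahedra of distinct indices. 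That cochain is precisely the prescribed weight change $\Delta\beta$, so $\delta\beta'=\alpha'$ is again solvable and the two obstruction classes coincide.

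Upgrading ``equal obstruction'' to ``equal spin structure'' is the main obstacle. Here I would exhibit the homotopy between $(\nu,\beta(\mu))$ and $(\nu',\beta'(\mu'))$ directly: by the additivity of Proposition~\ref{numuadditivity:prop} the analysis is local along $\gamma$, so it suffices to homotope the frame across one overarc crossing together with one incident reversed edge, and then concatenate these homotopies around the closed circuit. The delicate bookkeeping is to track the up/down behaviour of $\nu$ and the rotation of $\mu_0$ through a reversed overarc crossing, using Figg.~\ref{numuonbutterfly:fig} and~\ref{numuextension:fig}, and to verify that the full twists built into $\mu$ make the concatenated frames genuinely homotopic on $S(P)$; the residual $\pi_1(\textrm{GL}^{+}(3;\matR))=\zetadue$ defect accumulated around $\gamma$ is exactly the parity recorded by $\Delta\beta$, as in the third assertion of Proposition~\ref{coboundary:use:prop}.

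For the ``only if'' direction it remains to connect $\omega_0$ to $\omega_1$ by circuit moves. The key observation is combinatorial: the butterfly structure of $P$ (Figg.~\ref{Wvertices:fig} and~\ref{numuonbutterfly:fig}) constrains the admissible $2$-in-$2$-out patterns so that two pre-branchings can differ at a vertex only by reversing one of the two diagonal pairs of edges, that is, one of the two strands of the crossing; they can never differ along an adjacent pair. Hence the symmetric difference $D=\omega_0\oplus\omega_1$ meets each vertex in $0$, $2$ or $4$ edges, always along strands, and decomposes into simple circuits each of which follows a strand at every vertex it visits. For each such circuit, a vertex move at each of its vertices makes that strand the $v_2v_3$ (over) strand---changing $b$ with $\omega$ fixed merely selects which strand is the overarc---after which a circuit move reverses it; processing the circuits of $D$ in turn (re-adjusting $b$ at any vertex met twice) carries $\omega_0$ to $\omega_1$ while preserving $s$ by the ``if'' direction. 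The resulting quadruple $(P,\omega_1,b',\beta')$ then has the same spin structure as $(P,\omega_1,b_1,\beta_1)$, so one last application of Proposition~\ref{vertex:moves:real:prop} closes the chain of moves.
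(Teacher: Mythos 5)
Your ``if'' direction is essentially the paper's: reduce to a single circuit, check that reversing the edge $v_2v_3$ dualizes to reversing exactly the two germs of $\gamma$ at each visited vertex, then compute $\Delta\alpha=\alpha(P,\omega,b)+\alpha(P,\omega',b')$ region by region via Proposition~\ref{real:obstruction:computation:prop} and identify it with $\delta(\Delta\beta)$, in the spirit of Remark~\ref{vertex:obstruction:rem}. The paper carries this out with an explicit table of the nine positions of a region relative to an edge of $\gamma$, and stops at this ``slightly indirect'' verification; the additional frame homotopy you propose in order to upgrade equality of obstructions to equality of spin structures is a reasonable strengthening, but you leave it (like the table) as a sketch, so on this side you are neither ahead of nor behind the paper.

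The genuine problem is the ``key observation'' on which your ``only if'' direction rests. It is false that two pre-branchings can differ at a vertex only along a strand of the crossing: a pre-branching is an \emph{arbitrary} $2$-in-$2$-out orientation, so by pure counting the disagreement locus at a vertex is any set consisting of one incoming and one outgoing germ (or none, or all four); nothing in the butterfly structure excludes an adjacent pair. Concretely, if the branching at a vertex gives over strand $\{A,B\}$ ($A$ out, $B$ in) and under strand $\{C,D\}$ ($C$ out, $D$ in), then reversing exactly $A$ and $D$ yields a legitimate $2$-in-$2$-out pattern, so a second pre-branching may differ from the first along the adjacent pair $\{A,D\}$. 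Note also that the crossing structure you appeal to is part of the datum $b$, not of $P$ or $\omega$, so it cannot constrain which pre-branchings exist. Your argument breaks at exactly this point because of your parenthetical claim that vertex moves ``merely select which strand is the overarc'': with that reading, a disagreement circuit traversing an adjacent pair could never be made into an overarc circuit, and your procedure halts. What rescues the statement is that this second claim is also wrong, in the useful direction: the four weak branchings compatible with a fixed $2$-in-$2$-out pattern at a vertex realize \emph{all four} pairings of an incoming with an outgoing germ as the over strand (two choices inside each of the two admissible partitions of the germs into strands, the index flipping when the partition changes). This stronger property of the moves of Proposition~\ref{vertex:moves:real:prop} is precisely what the paper invokes when it arranges that an \emph{arbitrary} oriented simple circuit --- obtained from the decomposition of the disagreement locus, which needs only the parity count and not your strand claim --- contains overarcs only. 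So your plan is repairable, but as written the combinatorial core of the ``only if'' direction is incorrect, and the argument fails for any pair of pre-branchings whose disagreement circuits cross some vertex along an adjacent pair.
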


\begin{proof}
Suppose that $\omega_0$ and $\omega_1$ are distinct pre-branchings on the same spine $P$.
The union of the edges of $P$ on which $\omega_0$ and $\omega_1$ disagree can be expressed as
a disjoint union of simple circuits oriented by $\omega_0$. It is then sufficient to consider the situation
of two weak branchings $\omega,\omega'$ that differ only on a simple circuit $\gamma$ oriented by $\omega$, and then
iterate the procedure. Moreover, having already described  how to obtain from each other any two
pairs $(b,\beta)$ yielding the same spin structure on a given $(P,\omega)$, it is now sufficient
to find one specific weak branching $b$ on $(P,\omega)$ and one
$b'$ on $(P,\omega')$ and to describe a move $\beta\mapsto\beta'$ such that
$s(P,\omega,b,\beta)=s(P,\omega',b',\beta')$. This move will be that of the statement, implying the conclusion.
To describe the move we note that indeed via Proposition~\ref{vertex:moves:real:prop}
we can arrange so that $\gamma$ contains overarcs only in a graph $\Gamma\in\calN$ giving a weak branching $b$
on $(P,\omega)$. Examining Fig.~\ref{tetraindicesduals:fig}-left and~\ref{Wvertices:fig} one readily sees that the weak branching $b'$
obtained by reversing $\gamma$ is derived from $b$ by switching the orientation of the edge $v_2v_3$ in the tetrahedra dual to the edges
in $\gamma$. We are only left to show that the move $\beta\mapsto\beta'$ such that
$s(P,\omega,b,\beta)=s(P,\omega',b',\beta')$ consists in adding to $\beta$ the $1$-cochain $\Delta\beta$ given by the
duals of the edges in $\gamma$ having endpoints with distinct indices.
We will prove this in a slightly indirect way, in the spirit of Remark~\ref{vertex:obstruction:rem},
by computing $\Delta\alpha=\alpha(P,\omega,b)+\alpha(P,\omega',b')$ and showing that $\Delta\alpha=\delta(\Delta\beta)$.

We begin by noting that there are $9$ possible positions of a region $R$ with respect to an edge $e$ of $\gamma$,
as shown in Fig.~\ref{gammaedges:fig}.
\begin{figure}
    \begin{center}
    \includegraphics[scale=.6]{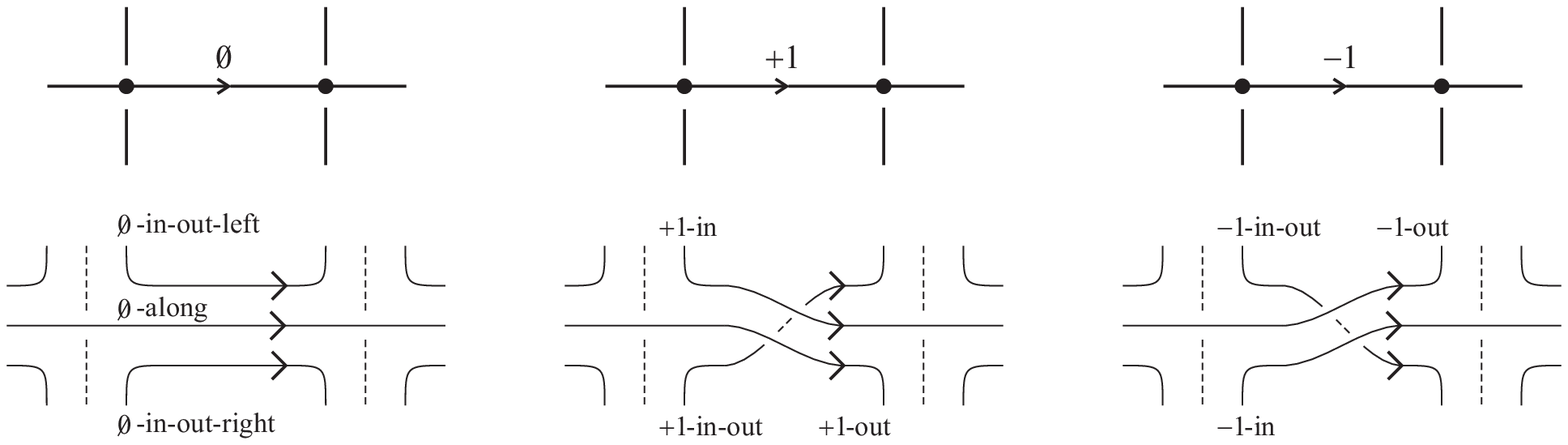}
    \end{center}
    \vspace{-.5cm}\mycap{The edges in the circuit $\gamma$ and the corresponding regions of $P$.\label{gammaedges:fig}}
\end{figure}
One can now check that the contributions carried by $e$ to $(\Delta\alpha)(R)$, depending on the indices
$\pm1/\pm1$ of the ends of $e$, are as given in the following table (with $\partial R$ oriented
as in in Fig.~\ref{gammaedges:fig}):
\begin{center}
\begin{tabular}{c||c|c|c|c}
                            &   $+1/+1$ & $-1/-1$   & $+1/-1$   & $-1/+1$   \\ \hline\hline
$\emptyset$-in-out-right    &   $0$     & $0$       & $1$       & $1$       \\ \hline
$\emptyset$-along           &   $0$     & $0$       & $0$       & $0$       \\ \hline
$\emptyset$-in-out-left     &   $0$     & $0$       & $1$       & $1$       \\ \hline
$+1$-in-out                 &   $0$     & $0$       & $1$       & $1$       \\ \hline
$+1$-in                     &   $1$     & $0$       & $1$       & $0$       \\ \hline
$+1$-out                    &   $1$     & $0$       & $0$       & $1$       \\ \hline
$-1$-in-out                 &   $0$     & $0$       & $1$       & $1$       \\ \hline
$-1$-in                     &   $1$     & $0$       & $1$       & $0$       \\ \hline
$-1$-out                    &   $1$     & $0$       & $0$       & $1$
\end{tabular}
\end{center}
See Fig.~\ref{someDelta:fig}
\begin{figure}
    \begin{center}
    \includegraphics[scale=.6]{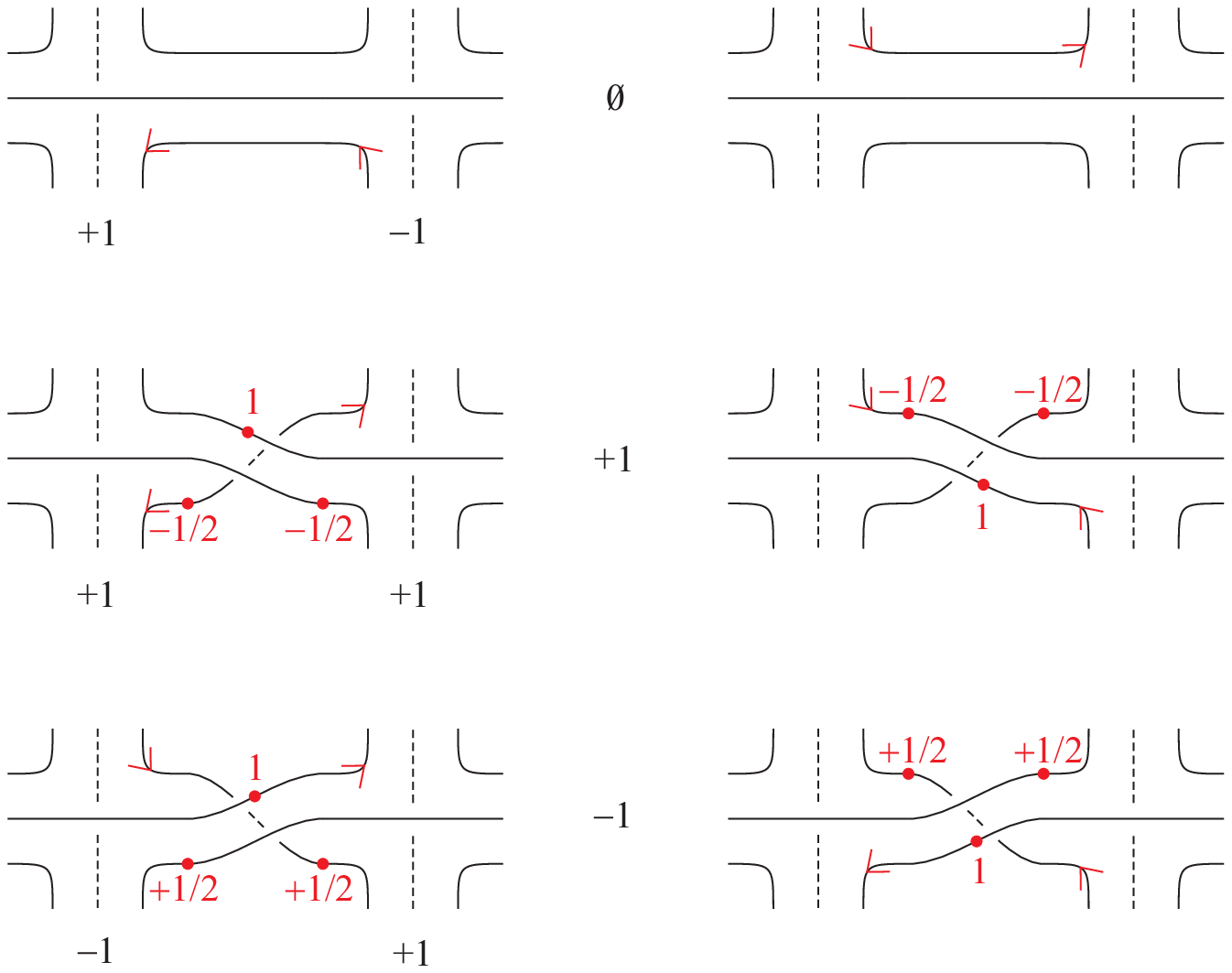}
    \end{center}
\vspace{-.5cm}\mycap{Three computations of the contribution of an edge $e$ in $\gamma$ to $(\Delta\alpha)(R)$.
The colour $\emptyset/+1/-1$ of $e$ (unchanged by the switch of $\gamma$) is shown in the middle.
On the left we see the indices of the ends of $e$ before the switch, and the local
computation of $\alpha(R)$. On the right the computation of $\alpha'(R)$ after the switch.\label{someDelta:fig}}
\end{figure}
for the explicit computation of some of these values.

To conclude we must now show that the total $(\Delta\alpha)(R)$ obtained by summing the
contributions given by the various edges of $\gamma$ equals (mod 2) the number of
edges in $\gamma$ visited by $\partial R$ and having ends with distinct indices.
If $\partial R$ visits only one edge, \emph{i.e.},~if it is of type in-out, the
conclusion is evident from the values in the table. Otherwise $(\Delta\alpha)(R)$ is the sum of only two possibly non-0 contributions,
one from the edge of $\gamma$ where $\partial R$ enters and one from the edge of $\gamma$ where it leaves.
More precisely, as one sees from the table, there is an ``in'' contribution depending only on the index of the vertex of $\gamma$ where
$\partial R$ enters (contribution $1$ for index $+1$ and contribution $0$ for index $-1$),
and an ``out'' contribution depending only on the index of the vertex of $\gamma$ where
$\partial R$ leaves (again, contribution $1$ for index $+1$ and contribution $0$ for index $-1$).
This implies that indeed $(\Delta\alpha)(R)$ has the desired value, and the proof is complete.
\end{proof}

\subsection{The bubble and the 2-3 move}
The next result dualizes to Proposition~\ref{tria:change:prop}.

\begin{prop}\label{spine:change:prop}
Two graphs in $\calN_{\text{\emph{w}}}$ define quadruples $(P_j,\omega_j,b_j,\beta_j)$ for $j=0,1$ with
$s(P_0,\omega_0,b_0,\beta_0)=s(P_1,\omega_1,b_1,\beta_1)$ if and only if
they are obtained from each other by the moves of
Propositions~\ref{vertex:moves:real:prop} and~\ref{circuit:move:prop} and those shown in Fig.~\ref{MPbubblemoves:fig}.
\begin{figure}
    \begin{center}
    \includegraphics[scale=.6]{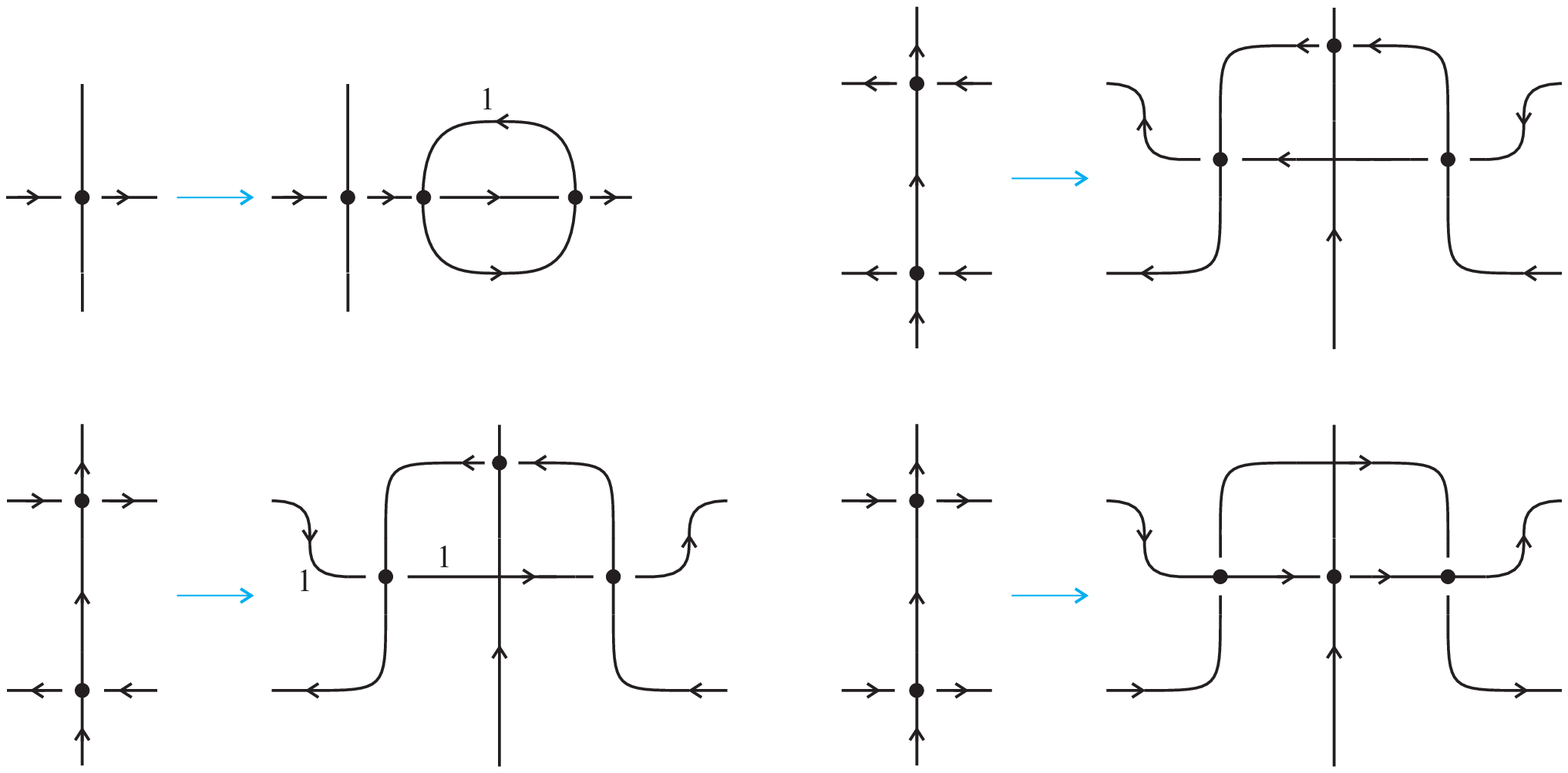}
    \end{center}
\vspace{-.5cm}\mycap{Moves on $\calNtil_{\textrm{w}}$ preserving the associated spin structure; the edges
entirely contained in the picture have colour $\emptyset$ and weight $0$.\label{MPbubblemoves:fig}}
\end{figure}
\end{prop}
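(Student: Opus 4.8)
The plan is to prove the two implications separately, leaning on Propositions~\ref{vertex:moves:real:prop} and~\ref{circuit:move:prop}, which already describe every change of $(\omega,b,\beta)$ possible while the spine $P$ is held fixed. For sufficiency, the moves of those two propositions preserve $s(P,\omega,b,\beta)$ by the cited results, so only the two moves of Fig.~\ref{MPbubblemoves:fig} remain to be checked, and both are analysed locally in the style of the earlier proofs. Forgetting the frame, each realises an honest change of special spine: the $2\to 3$ move replaces two butterflies glued along a common region by three butterflies sharing a new edge, leaving the thickening $M$ unchanged, while the bubble move creates a new ball, so that $M$ is unaffected and only the puncture count rises by one. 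Invoking Proposition~\ref{numuadditivity:prop} to work with the additive frame, I would verify that $(\nu,\mu_0)$, and hence $(\nu,\beta(\mu))$ after the prescribed weight correction, is homotopic before and after the move on the boundary of each region involved; the single region across which the obstruction jumps is exactly compensated by the coefficient~$1$ prescribed in Fig.~\ref{MPbubblemoves:fig}. For the bubble the newly created regions bound a ball, so this verification is immediate.

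For necessity, assume $s(P_0,\omega_0,b_0,\beta_0)=s(P_1,\omega_1,b_1,\beta_1)$; the first step is to reduce to the case of a common spine. By the Matveev--Piergallini calculus for special spines of a fixed $3$-manifold (see~\cite{Matveev:AAM}), adapted to the present setting of ideal and internal vertices, after applying bubble moves to equalise the two puncture numbers and to guarantee at least two vertices, the underlying spines $P_0$ and $P_1$ are joined by a finite sequence of $2\to 3$ moves and their inverses. Each such \emph{raw} move must then be promoted to the decorated move of Fig.~\ref{MPbubblemoves:fig}, at the cost of auxiliary vertex and circuit moves.

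This promotion is the heart of the matter, and the step I expect to be the main obstacle. A raw $2\to 3$ move arrives with the participating butterflies carrying an arbitrary weak branching and with an arbitrary pre-branching $\omega$ on the surrounding edges; in particular the move need not be \emph{admissible}, in the sense that a branching compatible with $\omega$ on the three new tetrahedra need not exist. The idea is to show that, modulo the moves of Propositions~\ref{vertex:moves:real:prop} and~\ref{circuit:move:prop}, every such configuration can be normalised to the single standard one drawn in Fig.~\ref{MPbubblemoves:fig}: the circuit moves reorient $\omega$ along the relevant edges so as to make the move admissible, and the vertex moves normalise the branchings on the participating tetrahedra. Since both families preserve $s$, this normalisation is free of charge. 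Carrying it out amounts to a bounded but lengthy case analysis over the pre-branchings and weak branchings of two, and of three, glued branched tetrahedra---the one-dimension-up analogue of the vertex analysis already performed for Proposition~\ref{vertex:moves:real:prop}.

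Finally, performing the normalised moves carries $(P_0,\omega_0,b_0,\beta_0)$ to some quadruple $(P_1,\omega',b',\beta')$ on the spine $P_1$, and by the sufficiency direction this quadruple still induces $s(P_0,\omega_0,b_0,\beta_0)$, hence $s(P_1,\omega_1,b_1,\beta_1)$. Two quadruples on the same spine $P_1$ inducing the same spin structure are related by the moves of Propositions~\ref{vertex:moves:real:prop} and~\ref{circuit:move:prop} thanks to Proposition~\ref{circuit:move:prop}, and concatenating these with the previous sequence completes the argument.
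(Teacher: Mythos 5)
Your architecture is the same as the paper's: necessity via the Matveev--Piergallini calculus (the paper cites~\cite{Matveev:calculus, Pierg:calculus} rather than~\cite{Matveev:AAM}) followed by promotion of each raw bubble/$2$-$3$ move to a decorated one, sufficiency via a local check that the decorated moves preserve the spin structure, and a final appeal to Propositions~\ref{vertex:moves:real:prop} and~\ref{circuit:move:prop} once a common spine is reached. The sufficiency check you sketch is also in the paper's spirit: there the change $\Delta\alpha$ of the obstruction is computed region by region and seen to equal the coboundary prescribed by the weights of Fig.~\ref{MPbubblemoves:fig}, with a direct frame argument for the bubble move and a reference to~\cite{LNM} for the rest.

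However, the step you yourself flag as ``the heart of the matter'' is left unproved, and that is a genuine gap: you assert, but do not show, that every raw move can be normalised, by spin-preserving moves, into a configuration where a decorated move of Fig.~\ref{MPbubblemoves:fig} literally applies. Moreover, the route you propose for this (circuit moves to reorient $\omega$, then a case analysis ``over the pre-branchings and weak branchings of two, and of three, glued branched tetrahedra'') is both heavier than necessary and misdirected on one point: the paper proves that the pre-branching never needs to be changed, so no circuit move enters the normalisation at all, and your worry about ``admissibility'' of $\omega$ does not materialise. Concretely, given the edge $e$ with distinct ends $V_0,V_1$ at which the raw move is to be performed, one first applies $\trerom_{\varepsilon(V_j)}$ at any end where $e$ is an underpass, making $e$ an overpass at both ends; then, according to the colour of $e$ and the indices $\varepsilon(V_0),\varepsilon(V_1)$, one applies suitable combinations of $I$, $\duerom$, $\overline{I}$, $\overline{\duerom}$ to bring the colour of $e$ to $\emptyset$ and to exclude the index pattern $\varepsilon(V_0)=+1$, $\varepsilon(V_1)=-1$; finally one kills the weight of $e$ by a coboundary. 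After these five steps a move of Fig.~\ref{MPbubblemoves:fig} applies verbatim, so the case analysis concerns a single edge and its two end vertices, not whole assemblies of two or three tetrahedra. Some such explicit verification is indispensable: without it the necessity direction rests on an unsubstantiated claim, since nothing in Propositions~\ref{vertex:moves:real:prop} and~\ref{circuit:move:prop} by itself guarantees that the special configurations of Fig.~\ref{MPbubblemoves:fig} are always reachable.
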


\begin{proof}
Two special polyhedra are spines
(in the punctured sense) of the same manifold without boundary spheres if and only if
they are related by bubble and $2$-$3$ moves~\cite{Matveev:calculus, Pierg:calculus}.
It is then sufficient to
prove the following:
\begin{itemize}
\item Using the moves $I$ and $\duerom$ any edge $e$ with distinct ends $V_0,V_1$ of a graph in $\calN_{\textrm{w}}$ can
be transformed into one to which a move in Fig.~\ref{MPbubblemoves:fig} applies;
\item At the level of spines the moves in Fig.~\ref{MPbubblemoves:fig} translate the bubble and the $2$-$3$ move,
and at the level of quadruples $(P,\omega,b,\beta)$ represented by
graphs in $\calN_{\textrm{w}}$ the associated spin structure is unchanged under these moves.
\end{itemize}
With $\varepsilon$ being the index of a vertex,
the following steps establish the first assertion:
\begin{enumerate}
\item If $e$ is an underpass at some $V_j$, apply to each such $V_j$ the move $\trerom_{\varepsilon(V_j)}$
(with $\trerom_+$ the analogue of $\trerom_-$ for a vertex of index $+1$);
this allows to assume that $e$ is an overpass at $V_0$ and $V_1$;
\item If the colour of $e$ is now $+1$, act as follows:
\begin{enumerate}
\item If $\varepsilon(V_0)=\varepsilon(V_1)=-1$, apply $I$ to $V_0$ and $\duerom$ to $V_1$;
\item If $\varepsilon(V_0)=-1$ and $\varepsilon(V_1)=+1$, apply $\overline{\duerom}$ to $V_1$;
\item If $\varepsilon(V_0)=+1$, apply $\overline{I}$ to $V_0$;
\end{enumerate}
\item If the colour of $e$ is now $-1$, act as follows:
\begin{enumerate}
\item If $\varepsilon(V_0)=\varepsilon(V_1)=+1$, apply $\overline{I}$ to $V_0$ and $\overline{\duerom}$ to $V_1$;
\item If $\varepsilon(V_0)=+1$ and $\varepsilon(V_1)=-1$, apply $\duerom$ to $V_1$;
\item If $\varepsilon(V_0)=-1$, apply $I$ to $V_0$;
\end{enumerate}
\item The colour of $e$ is now $\emptyset$, and we want to exclude the case $\varepsilon(V_0)=+1$ and
$\varepsilon(V_1)=-1$, for which we apply $\overline{I}$ to $V_0$
and $\duerom$ to $V_1$;
\item Up to coboundaries we turn the weight of $e$ to $0$.
\end{enumerate}
For the second assertion, once again we start by an indirect
argument in the spirit of Remark~\ref{vertex:obstruction:rem}, showing that the weights appearing in the moves
compensate for the variation of the obstruction $\alpha\in C^2\left(P;\zetadue\right)$, which is done in Fig.~\ref{MPbubbleproof:fig}.
\begin{figure}
    \begin{center}
    \includegraphics[scale=.6]{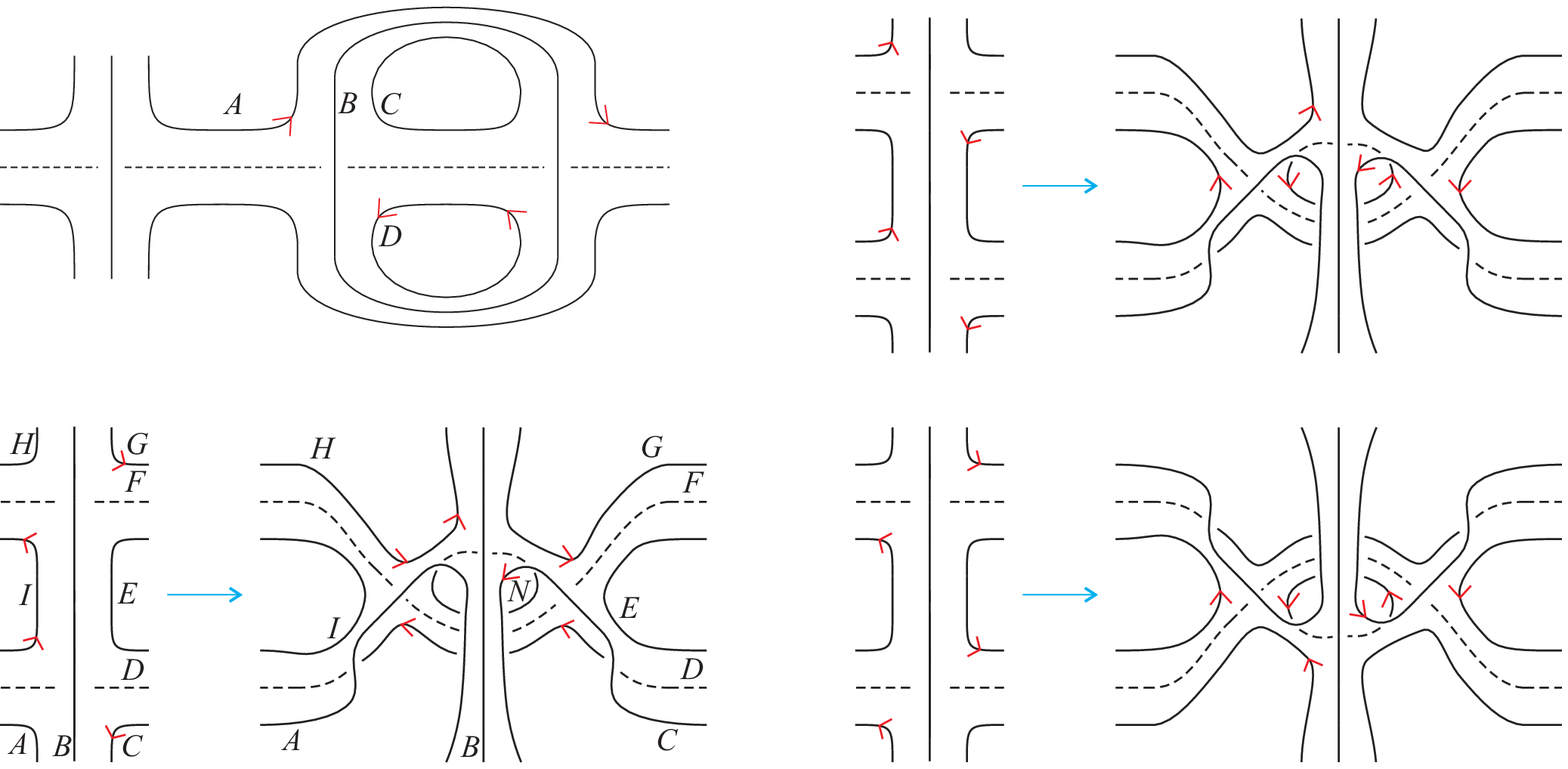}
    \end{center}
\vspace{-.5cm}\mycap{Proof that under the moves of Fig.~\ref{MPbubblemoves:fig} the change in the obstruction $\alpha$ is compensated
by the weights on the edges. For the two moves on the right this is easy: on all the regions that survive $\alpha$
keeps the same value, and on the newborn region it has value $0$. For the top-left move
$\Delta\alpha$ is $1$ on $A,B,C$ and $0$ on $D$, while for the bottom-left move $\Delta\alpha$ is $0$ on $A,B,C,E,F,G$
and $1$ on $D,H,I,N$, and indeed for both cases these values are given by the weights in the moves.
\label{MPbubbleproof:fig}}
\end{figure}
A more direct arguments for the move of Fig.~\ref{MPbubblemoves:fig}-top/left
is carried out in Fig.~\ref{bubbleproof:fig};
\begin{figure}
    \begin{center}
    \includegraphics[scale=.6]{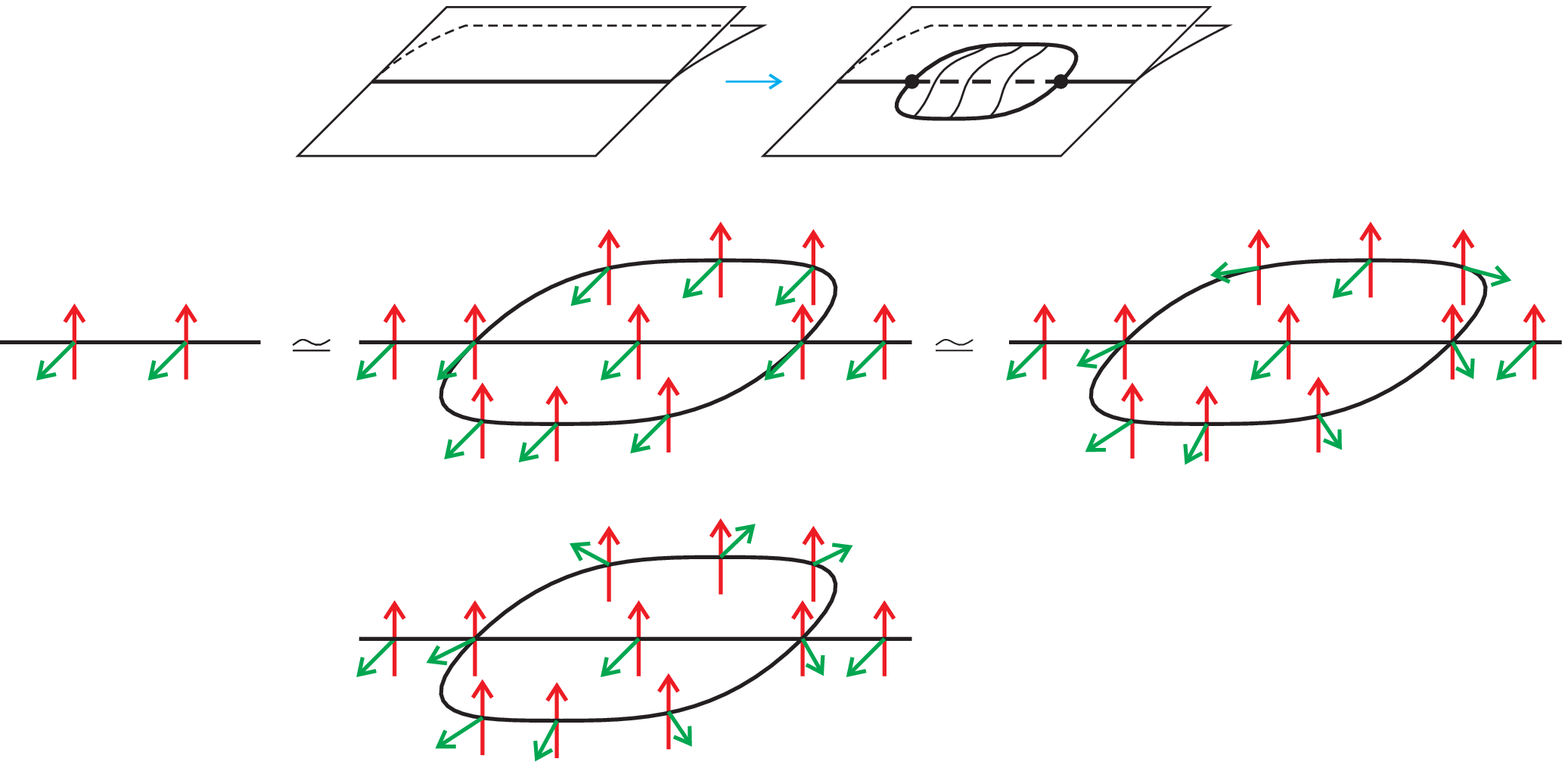}
    \end{center}
\vspace{-.5cm}\mycap{Top: the branched bubble move. Middle: frames carrying the same spin structure before and after the move.
Bottom: the frame carried by the spine after the move, that becomes the previous one
taking into account the weight $1$ appearing in the move of Fig.~\ref{MPbubblemoves:fig}-left.\label{bubbleproof:fig}}
\end{figure}
for the other moves the argument follows from~\cite{LNM}.
\end{proof}

\section{Arbitrarily branched graphs\\ and the corresponding moves}\label{graphic:local:sec}

In this section we show that the global move of Proposition~\ref{circuit:move:prop} can be replaced, in a suitable sense,
by a simultaneous combination of local ones.

\subsection{Graphs representing arbitrarily branched triangulations}
We introduce now a set $\calA$ of decorated graphs via which we can encode an
\emph{arbitrarily branched} triangulation, namely a triangulation in
which each tetrahedron is endowed with a branching, without any compatibility whatsoever.
Each vertex of a graph $\Gamma$ in $\calA$ will be given a planar structure as in
Fig.~\ref{Wvertices:fig}, which corresponds to giving the dual tetrahedron a branching.
Note that each edge of $\Gamma$ then has an orientation defined at each of its ends.
We are left to choose colours for the edges of $\Gamma$ in order to encode the face-pairings,
or equivalently the attaching circles to $S(P)=\Gamma$ of the regions of the dual spine $P$.
To do so we note that dual to a germ $e$ of edge of $S(P)$ at some vertex
there is a branched triangle. We can now label by $0,1,2$
the vertices of this triangle according
to the number of incoming edges, and dually the germs of regions incident to $e$.
We show in Fig.~\ref{regionlabels:fig}-left (in a cross-section)
this abstract labeling rule,
\begin{figure}
    \begin{center}
    \includegraphics[scale=.6]{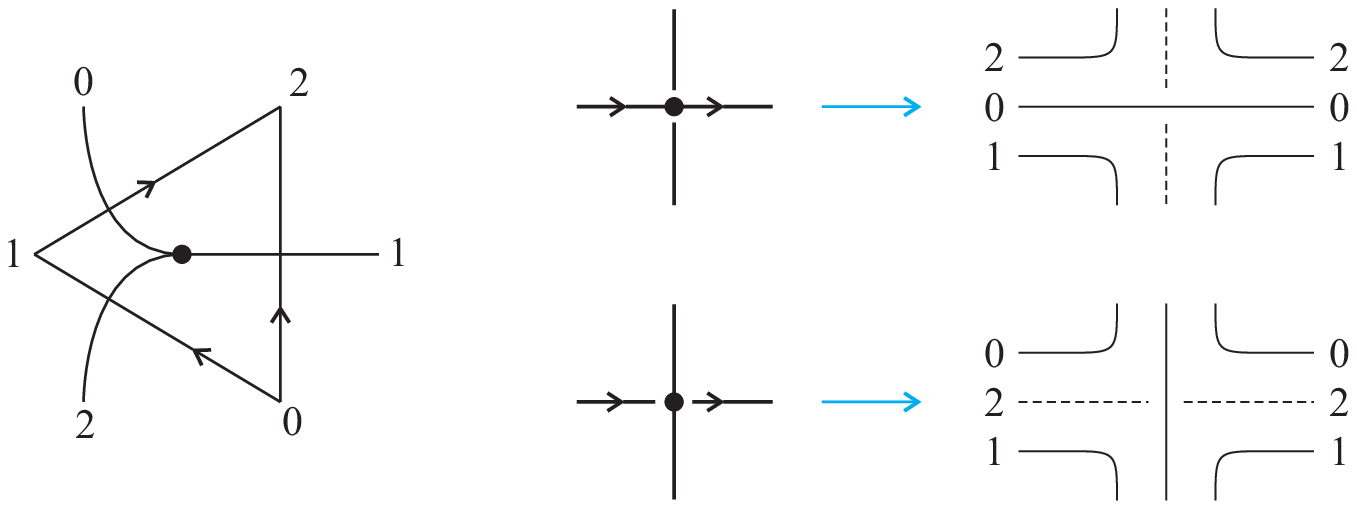}
    \end{center}
\vspace{-.5cm}\mycap{Labels for the germs of region near a (branched) vertex.\label{regionlabels:fig}}
\end{figure}
and in Fig.~\ref{regionlabels:fig}-right its concrete consequences.
One can now easily check the following:

\begin{lemma}
Let $e$ be an edge of $\Gamma$ and let $n(e)$ be the number of regions incident to $e$
having the same label at both ends of $e$.
\begin{itemize}
\item If the two ends of $e$ are consistently oriented then $n(e)=0$ or $n(e)=3$;
\item If the two ends of $e$ are inconsistently oriented then $n(e)=1$.
\end{itemize}
\end{lemma}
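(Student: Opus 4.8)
The plan is to reduce everything to the parity of a single permutation of three elements. First I would observe that, by the labeling rule of Fig.~\ref{regionlabels:fig}, at each end of $e$ the three germs of regions incident to $e$ receive the three \emph{distinct} labels $0,1,2$; thus the two labelings are two bijections from the set of these three regions to $\{0,1,2\}$, and comparing them produces a permutation $\tau$ of $\{0,1,2\}$. By construction $n(e)$ is exactly the number of fixed points of $\tau$. Since a permutation of three elements has $3$, $1$, or $0$ fixed points according to whether it is the identity, a transposition, or a $3$-cycle, this already gives $n(e)\in\{0,1,3\}$, with $n(e)=1$ if and only if $\tau$ is odd. So the whole statement reduces to the claim that $\tau$ is even exactly when the two ends of $e$ are consistently oriented.

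Next I would translate the labeling into orientation data on the dual faces. The label of a germ of region is the label of the \emph{opposite} vertex of the branched triangle dual to the germ of $e$, and the vertex-labeling by in-degree is nothing but the total order source$<$middle$<$sink, whose associated cyclic (prevailing) orientation is $0\to1\to2$. Because the face-pairing encoded by $e$ is a simplicial identification $\phi$ of the two triangles and the opposite-vertex correspondence is natural, the permutation $\tau$ coincides (up to the same harmless relabeling at both ends) with the one comparing the two vertex-labelings. Hence $\tau$ is even precisely when $\phi$ carries the prevailing orientation of the triangle at one end to the prevailing orientation of the triangle at the other end.

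Then I would bring in orientations. The gluing $\phi$ is orientation-reversing on the two triangles, so it sends the boundary orientation of one face to the opposite of the boundary orientation of the other. Writing $\epsilon=\pm1$ at each end for the comparison between the prevailing orientation and the boundary orientation of the relevant face, one gets $\phi_*(\text{prevailing})=-\,\epsilon\,\epsilon'\,(\text{prevailing}')$, so that $\tau$ is even if and only if $\epsilon\,\epsilon'=-1$. Finally I would read off from the definition of the induced pre-branching (Fig.~\ref{numuonbutterfly:fig}) that the orientation of the germ of $e$ at a vertex is outgoing exactly when the prevailing orientation of the dual face agrees with its boundary orientation, i.e.\ that this orientation is governed precisely by the sign $\epsilon$. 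Chasing the two ends then shows that the ends of $e$ are consistently oriented if and only if $\epsilon\,\epsilon'=-1$, closing the chain of equivalences: consistent $\Leftrightarrow$ $\tau$ even $\Leftrightarrow$ $n(e)\in\{0,3\}$, and inconsistent $\Leftrightarrow$ $\tau$ odd $\Leftrightarrow$ $n(e)=1$.

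The fixed-point reduction and the combinatorial identification of $\tau$ are immediate; the delicate point, and the one I would spend most care on, is the last step: pinning down the two sign conventions so that they agree, namely that the in/out orientation of the dual germ is governed by $\epsilon$ as read from Fig.~\ref{numuonbutterfly:fig}, and that the orientation-reversing gluing supplies the extra minus sign. Both are a matter of carefully tracking the ambient orientation through the duality, after which the stated dichotomy falls out. As a sanity check, in the weakly branched case the two ends are always consistently oriented, and one recovers $n(e)=3$ for colour $\emptyset$ and $n(e)=0$ for colour $\pm1$, matching the face-pairing types of Fig.~\ref{facegluing:fig}.
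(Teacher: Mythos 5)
Your proof is correct, but note that the paper itself offers no argument for this lemma: it is stated as something ``one can now easily check,'' the intended check being a direct inspection of the labelled configurations of Fig.~\ref{regionlabels:fig} at the two ends of $e$ in the finitely many possible cases. Your route is genuinely different and more structural: identifying $n(e)$ with the number of fixed points of the comparison permutation $\tau$ reduces the whole statement to ``$\tau$ even $\Leftrightarrow$ consistent ends,'' and you then compute the parity of $\tau$ by transporting the prevailing orientation through the orientation-reversing face-pairing. This buys a conceptual explanation of the dichotomy (it is just the parity of a permutation of three objects), makes the subsequent colour assignment in $\permu_3^+$ versus $\permu_3^-$ transparent, and explains structurally why gluings matching exactly two edge orientations (transpositions) can never occur in a weak branching, a fact which Fig.~\ref{facegluing:fig} records by inspection. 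Moreover, two of your own caveats can be discharged cheaply. First, the ``opposite vertex'' reading of the labelling rule: even if the figure's convention were a different bijection between region germs and triangle vertices, as long as it is the same at both ends, $\tau$ only changes by conjugation, which alters neither its parity nor its fixed-point count --- your ``harmless relabeling'' remark already covers this. Second, and more pleasantly, the sign convention in your final step is irrelevant: consistency of the two germ orientations means exactly that the in/out statuses at the two ends differ, so all you need is that the germ orientation is governed by $\varepsilon$ via the \emph{same} rule at both ends (which holds because both tetrahedra are coherently oriented and the duality convention is uniform); whether ``outgoing'' corresponds to $\varepsilon=+1$ or to $\varepsilon=-1$ cancels in the product $\varepsilon\varepsilon'$. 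So the one step you flag as delicate requires no convention-chasing at all, only uniformity of the convention.
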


This implies that we can give an edge $e$ of $\Gamma$ the following colours in $\permu_3$
(see Fig.~\ref{Areconstruct:fig} for some examples):
\begin{itemize}
\item If the two ends of $e$ are consistently oriented
colour $$\sigma\in\permu^+_3=\{\emptyset,\ (0\,1\,2),\ (0\,2\,1)\}$$ if region $j$ at the first end of $e$ is matched to region
$\sigma(j)$ at the second end;
\item If the two ends of $e$ are inconsistently oriented, colour $$\tau\in\permu^-_3=\{(0\,1),\ (0\,2),\ (1\,2)\}$$ if region $j$ at one
end is matched to region $\tau(j)$ at the other end.
\end{itemize}

\begin{figure}
    \begin{center}
    \includegraphics[scale=.6]{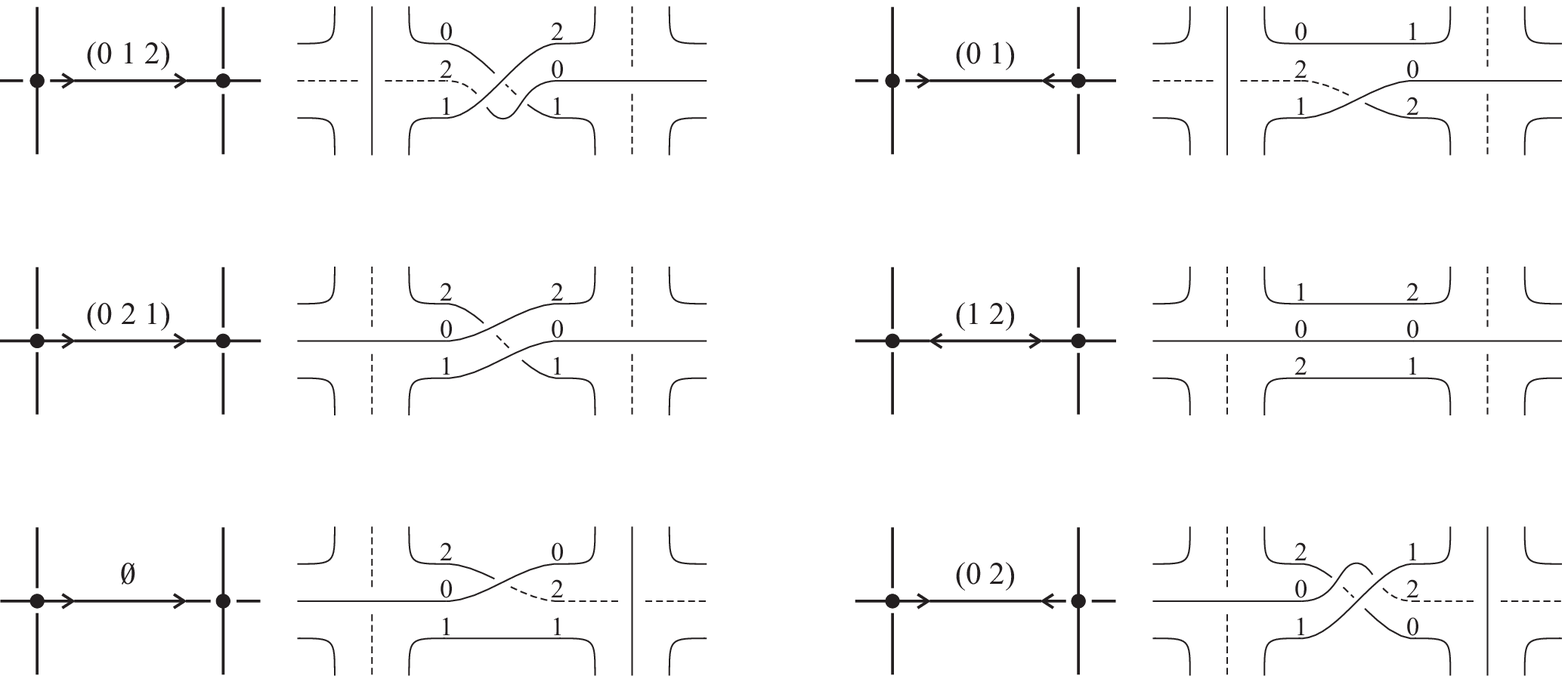}
    \end{center}
\vspace{-.5cm}\mycap{Meaning of the edge colours for a graph in $\calA$.\label{Areconstruct:fig}}
\end{figure}

\begin{rem}
\emph{A graph $\Gamma$ in $\calA$ defines a weakly branched triangulation if and only if
all the edges are consistently oriented. In this case $\Gamma$ is converted
into a graph in $\calN$ representing the same weakly branched triangulation by the colour-replacements
$(0\,1\,2)\mapsto+1$ and $(0\,2\,1)\mapsto-1$.}
\end{rem}

From now on we will call \emph{even} (respectively, \emph{odd})
an edge of a graph in $\calA$ with colour in
$\permu_3^+$ (respectively, in $\permu_3^-$), or, equivalently,
with consistently (respectively, inconsistently)
oriented ends.

\subsection{Graphs with multiply coloured edges}
As we did for $\calN$, to define moves on $\calA$ it is convenient to enlarge it
to some $\calAtil$ by allowing valence-2 vertices; edges are again decorated by
an orientation at each of their ends and a colour (in $\permu^+_3$
if the orientations match, in $\permu^-_3$ if they do not), but we also
insist that orientations should match across the valence-2 vertices. Note that
if we choose for each $2$-valent vertex of $\widetilde{\Gamma}\in\calAtil$ an interpretation as
    \includegraphics[scale=.25]{overinterpret.eps}
or as
    \includegraphics[scale=.25]{underinterpret.eps}
we can associate to $\widetilde{\Gamma}$ an arbitrarily branched special spine.

We now define a projection $\calAtil\to\calA$ by illustrating
in Fig.~\ref{calAtilmult:fig}
\begin{figure}
    \begin{center}
    \includegraphics[scale=.65]{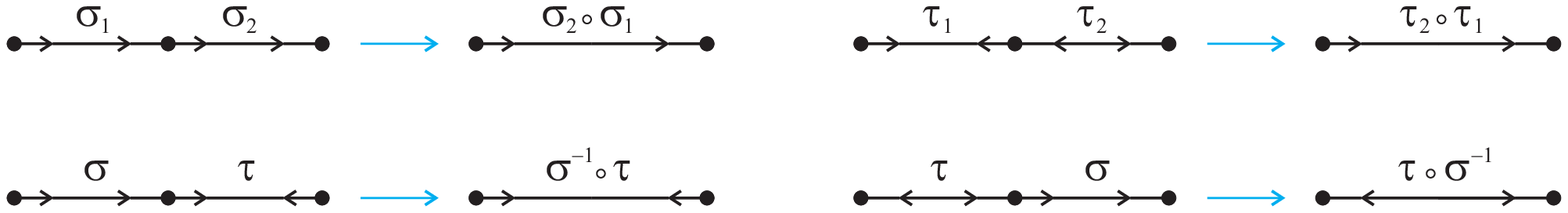}
    \end{center}
\vspace{-.5cm}\mycap{How to fuse together two edges of a graph in $\calAtil$.\label{calAtilmult:fig}}
\end{figure}
how fuse together two edges sharing
a valence-2 vertex; note that $\sigma,\sigma_1,\sigma_2\in\permu_3^+$ and
$\tau,\tau_1,\tau_2\in\permu_3^{-}$; moreover $\sigma^{-1}\compo\tau=\tau\compo\sigma$ and
$\tau\compo\sigma^{-1}=\sigma\compo\tau$, which gives alternative ways of expressing
the fusion rules. We have the following:

\begin{prop}\label{Assoc:prop}
The fusion rules of Fig.~\ref{calAtilmult:fig} are associative, so each graph $\widetilde{\Gamma}\in\calAtil$ defines
a unique $\Gamma\in\calA$. Moreover the arbitrarily branched spine associated to
$\widetilde{\Gamma}$ is well-defined regardless of the interpretation of the
valence-$2$ vertices as
    \includegraphics[scale=.25]{overinterpret.eps}
or
    \includegraphics[scale=.25]{underinterpret.eps},
and it coincides with the arbitrarily branched spine associated to $\Gamma$.
\end{prop}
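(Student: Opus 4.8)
The plan is to reduce the whole statement to the elementary group theory of $\permu_3$, by reading the edge colours as genuine group elements and fusion as their composition. Concretely, the colour of an edge should be interpreted, following Fig.~\ref{Areconstruct:fig}, as the bijection of the label set $\{0,1,2\}$ that identifies the germs of region at one end with those at the other; this bijection lies in $\permu_3^+$ exactly when the two ends are consistently oriented (an \emph{even} edge) and in $\permu_3^-$ otherwise (an \emph{odd} edge), so that the even/odd dichotomy is precisely the sign of the permutation. Fusing two edges across a valence-$2$ vertex — where, by the definition of $\calAtil$, the orientations and hence the label sets match — composes these two bijections, and the rule of Fig.~\ref{calAtilmult:fig} is exactly this composition. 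In particular the signs multiply correctly: the composite of two even or of two odd colours is even, that of an even and an odd colour is odd, in agreement with how orientation-consistency of the outer ends propagates through the fusion.

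With this identification the associativity claim is immediate. For a chain $e_1,e_2,e_3$ of three edges separated by two valence-$2$ vertices, fusing the left pair first and then the right, or vice versa, produces the two bracketings $(c_1\compo c_2)\compo c_3$ and $c_1\compo(c_2\compo c_3)$ of the same ordered product of colours, which agree because composition in $\permu_3$ is associative. Fusions performed at valence-$2$ vertices lying on distinct edges of the underlying $4$-valent graph commute trivially, so by the usual local-confluence argument the graph obtained by fusing \emph{as long as possible} is independent of the order of the fusions; hence each $\widetilde\Gamma\in\calAtil$ determines a unique $\Gamma\in\calA$.

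The independence of the interpretation of the valence-$2$ vertices is exactly where the relations $\sigma^{-1}\compo\tau=\tau\compo\sigma$ and $\tau\compo\sigma^{-1}=\sigma\compo\tau$ recorded before the statement enter: reading a valence-$2$ vertex as an over- or as an under-crossing (the two interpretations of Fig.~\ref{calAtilmult:fig}) produces the fused colour in two different forms, and these forms are equated precisely by those conjugation identities, which say that conjugating a $3$-cycle by a transposition inverts it. Thus the colour assigned to the fused edge, and therefore the normal form $\Gamma$ itself, does not depend on the chosen interpretations.

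Finally, the coincidence of the associated spines is the direct analogue of Proposition~\ref{W:assoc:prop} and is geometric rather than algebraic. A valence-$2$ vertex is not a genuine singular vertex of $S(P)$ but merely a subdivision point of a singular edge, along which the three incident germs of region pass through unchanged; deleting it, i.e. performing one fusion, leaves the attaching circles of all regions of $P$, hence the spine itself, unaltered. Checking this on the local models of Fig.~\ref{calAtilmult:fig} shows simultaneously that either interpretation yields the same local piece of spine and that this piece agrees with the one read off from the fused edge in $\Gamma$; iterating over all valence-$2$ vertices shows that the arbitrarily branched spine of $\widetilde\Gamma$ is well defined and equals that of $\Gamma$. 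The one genuinely delicate point in the whole argument is the careful bookkeeping of the two edge-orientations through each fusion, needed to be sure that the composition order is globally consistent so that the associativity of $\permu_3$ can actually be invoked; once this is arranged, every assertion follows formally.
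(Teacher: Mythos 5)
Your proposal is correct in substance but proceeds in the opposite order from the paper. The paper's entire proof consists of the remark that the \emph{second} (geometric) assertion can be established ``with some patience'' by inspecting local models (the examples of Fig.~\ref{Aassoc:fig}), and that the first assertion then \emph{follows} from it: fusion does not change the associated spine, a graph in $\calA$ is recoverable from its arbitrarily branched spine, hence every maximal sequence of fusions must produce the same $\Gamma$ --- which is exactly associativity. You instead prove associativity directly and algebraically --- colours are bijections between the label sets at the two ends, fusion is composition, iterated fusion along a chain is an ordered product in $\permu_3$ --- and treat the geometry separately. This buys a self-contained combinatorial proof of the first assertion, independent of the graph/spine correspondence, at the cost of the orientation bookkeeping you flag at the end (even colours must be inverted when read against a fixed traversal direction before the ``same ordered product'' argument literally applies; odd colours are involutions, so for them no choice is involved). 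Since your final paragraph, like the paper, ultimately defers the second assertion to a check on local models, the geometric core of the two arguments coincides, and your overall argument is acceptable at the same level of rigour as the paper's.

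One conceptual slip should be corrected: the identities $\sigma^{-1}\compo\tau=\tau\compo\sigma$ and $\tau\compo\sigma^{-1}=\sigma\compo\tau$ are not ``where the over/under interpretation enters'' (and the two interpretations are the inline pictures in the text, not two readings of Fig.~\ref{calAtilmult:fig}). The fusion rules are purely combinatorial and never mention the interpretation of valence-$2$ vertices, so the normal form $\Gamma$ is interpretation-independent by construction; your third paragraph is aimed at a non-issue. What those identities actually encode is the ambiguity of reading direction: the colour of an odd edge can by definition be read from either end, and fusing $\sigma$ followed by $\tau$ gives $\tau\compo\sigma$ read one way and $(\tau\compo\sigma)^{-1}=\sigma^{-1}\compo\tau$ read the other; the identity says these agree, i.e.\ that the composite really is an involution, so the rule is self-consistent. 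The interpretation-independence that genuinely requires an argument is that of the \emph{spine} attached to $\widetilde\Gamma$, and in your write-up that is carried entirely by the local-model check of the last paragraph; if you want a cleaner mechanism there, observe that the interpretation only affects the labelling of the three germs of regions at the valence-$2$ vertex, and this intermediate labelling cancels when the strand-matchings of the two edges are composed, so both the fused colour and the attaching circles of the regions are unchanged.
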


The first assertion of this result follows from the second one, that can be established with some patience;
see some examples in Fig.~\ref{Aassoc:fig}.
\begin{figure}
    \begin{center}
    \includegraphics[scale=.6]{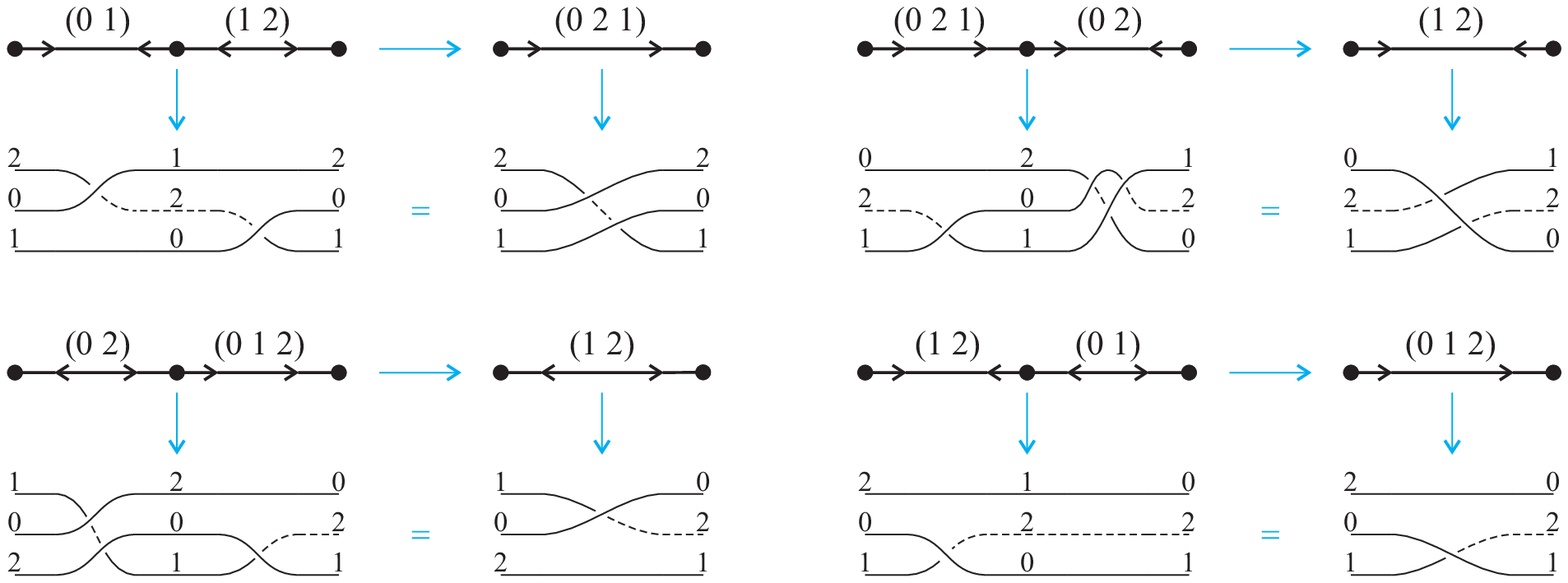}
    \end{center}
\vspace{-.5cm}\mycap{To each $\widetilde{\Gamma}\in\calAtil$ one can uniquely associate an arbitrarily branched special spine,
also given by the graph $\Gamma\in\calA$ obtained from $\widetilde{\Gamma}$ by
fusing the edges through valence-$2$ vertices.\label{Aassoc:fig}}
\end{figure}

\subsection{A new move}
Let us consider the move on graphs in $\calAtil$ described in Fig.~\ref{newmovenew:fig}-left.
In Fig.~\ref{newmovenew:fig}-right we show that the move preserves the spine (or triangulation) encoded by
the graph, while of course changing the arbitrary branching. The following result (that will also follow
from the rest of this section) is not difficult to show:
\begin{figure}
    \begin{center}
    \includegraphics[scale=.6]{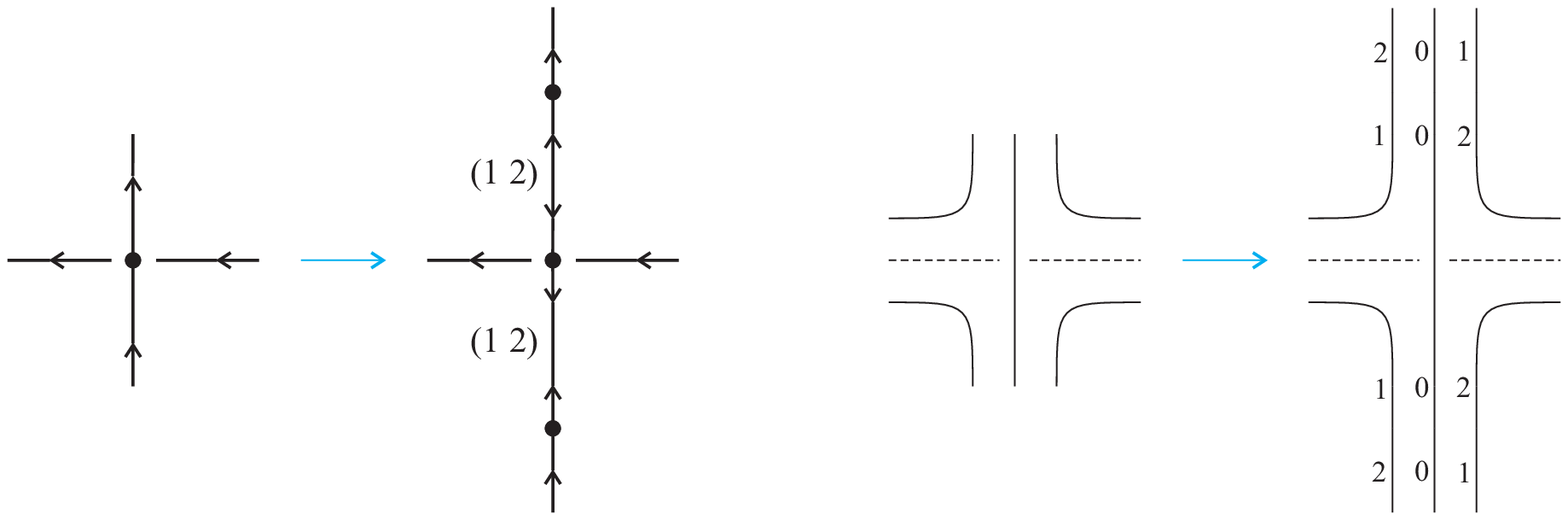}
    \end{center}
\vspace{-.5cm}\mycap{A move on $\calAtil$.\label{newmovenew:fig}}
\end{figure}

\begin{prop}
Any two arbitrary branchings on the same triangulations are related by compositions of the
moves $I$ and $\duerom$ (ignoring weights), that of Fig.~\ref{newmovenew:fig}, and their inverses.
\end{prop}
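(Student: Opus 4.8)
The plan is to use the fact that, with the triangulation held fixed, an arbitrary branching is nothing more than an independent choice of branching on each tetrahedron: the edge colours of the corresponding graph in $\calA$ are then forced by the (fixed) face-pairings, since changing the branching of a tetrahedron merely relabels the germs of region on its faces. Hence any two arbitrary branchings differ at only finitely many tetrahedra, and it is enough to show that the branching at a single tetrahedron $\Delta$ can be turned into any other branching of $\Delta$ by moves that leave every other tetrahedron's branching untouched; composing such local changes then connects any two arbitrary branchings.

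First I would recall that the branchings of one oriented tetrahedron form a $24$-element set on which the relabellings of the four vertices act simply transitively. Reading off Fig.~\ref{vertexmoves:fig} (and now discarding the weights, of which $\calA$ carries none), the moves $I$ and $\duerom$ realise exactly the changes of branching that preserve the pre-branching induced at the vertex. By the count in the proof of Proposition~\ref{vertex:moves:real:prop} there are precisely four branchings inducing a fixed pre-branching, and $I$, $\duerom$ and $\trerom=I\cdot\overline{\duerom}$ pass between all four; thus $I$ and $\duerom$ generate the full stabiliser $K$, an order-$4$ group of relabellings, of a given pre-branching class. Since the $24$ branchings split into $6$ pre-branching classes of size $4$, all that remains is to be able to move from one class to another.

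This is the role of the new move of Fig.~\ref{newmovenew:fig}. After using Proposition~\ref{Assoc:prop} to reduce away the auxiliary valence-$2$ vertex (which also certifies that the outcome is a well-defined element of $\calA$), I would read off its effect on the planar structure and on the incident edge colour and check that, in the spirit of the circuit move of Proposition~\ref{circuit:move:prop}, it reverses the edge of type $v_2v_3$ at $\Delta$ and hence changes the middle pair $\{v_0v_2,v_1v_3\}$ (cf.\ Remark~\ref{taut:rem}), carrying one pre-branching class to another while leaving the neighbouring tetrahedron's branching fixed. Conjugating this move by elements of $K$ --- that is, pre- and post-composing with $I$ and $\duerom$ --- and taking suitable iterates, I would verify that the group generated by $K$ together with the relabelling induced by the new move acts transitively on the $6$ pre-branching classes. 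Granting this, the generated group contains $K$ and is transitive on the six classes, hence has order at least $4\cdot 6=24$ and so equals the whole relabelling group; combined with full mobility inside each class, this shows that every branching of $\Delta$ is reachable, which proves the statement.

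The hard part will be this final combinatorial verification: one must translate the pictorial move of Fig.~\ref{newmovenew:fig} into an explicit relabelling of the four vertices and confirm that $K$ together with (the conjugates and iterates of) this relabelling generates the full group --- a single generator adjoined to $K$ need not suffice, so the argument genuinely uses the extra elements produced by $I$ and $\duerom$. A subsidiary point demanding care is locality: one must check that the new move, although phrased in $\calAtil$, alters the branching of $\Delta$ only and, after edge-fusion, returns the same branching on every adjacent tetrahedron, so that the single-tetrahedron reduction of the first paragraph is legitimate.
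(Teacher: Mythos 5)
Your proposal is correct and is essentially the paper's own (only sketched) argument: localize at a single tetrahedron and show that the moves act transitively on its $24$ branchings, which the paper phrases as the ability to create all $24$ configurations at each vertex. The verification you defer is in fact immediate: under the simply transitive relabelling action, $K$ is the cyclic group generated by the $4$-cycle shifting $(v_0,v_1,v_2,v_3)$ cyclically (the four branchings compatible with a fixed pre-branching are exactly the cyclic shifts of one another), the move of Fig.~\ref{newmovenew:fig} induces the transposition exchanging $v_2$ and $v_3$ (it reverses the edge $v_2v_3$, as you note, and correspondingly flips the index), and a $4$-cycle together with a transposition of consecutive symbols generates all of $\permu_4$ --- so transitivity on the six pre-branching classes, and hence on all $24$ branchings, follows.
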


Since for a single tetrahedron there are 24 different branchings, this result means that
at each vertex using
the moves $I$ and $\duerom$ and that of Fig.~\ref{newmovenew:fig}
one can create all 24 possible configurations. See for instance Fig.~\ref{newmovebisnew:fig}.
\begin{figure}
    \begin{center}
    \includegraphics[scale=.6]{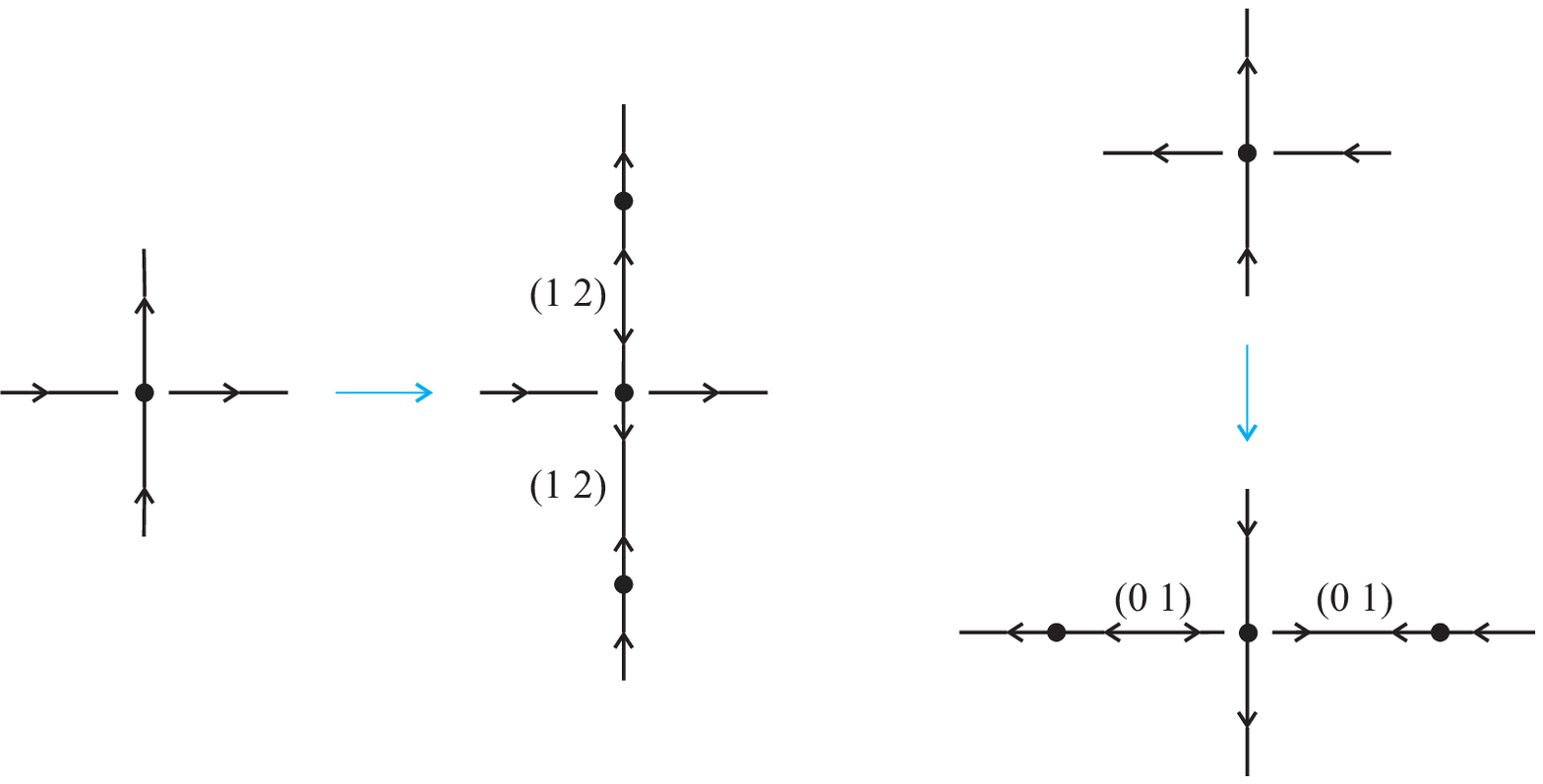}
    \end{center}
\vspace{-.5cm}\mycap{The inverse of the move of Fig.~\ref{newmovenew:fig} and one generated by those
in Fig.~\ref{newmovenew:fig} and~\ref{vertexmoves:fig}.\label{newmovebisnew:fig}}
\end{figure}

\subsection{Weighted graphs and weighted fusion}
We define $\calAtil_{\textrm{w}}$ as the set of graphs in $\calAtil$ with \emph{weights}
attached to the edges. The weight
of an edge $e$ is given by an \emph{internal orientation} and by a
\emph{numerical weight} in the group $G=\left(\frac12\cdot\matZ\right)/_{\!2\matZ}$, with the following restrictions:
\begin{itemize}
\item If $e$ is even then its internal orientation matches those at its ends (so it is not shown in the pictures)
and the numerical weight is $0$ or $1$;
\item If $e$ is odd the numerical weight is $\pm\frac12$.
\end{itemize}
Note that there is a natural inclusion $\calNtil_{\textrm{w}}\subset \calAtil_{\textrm{w}}$
The numerical part of a system of weights will be viewed up to $1$-coboundaries
with values in $\zetadue$, namely the numerical weights of all $4$ edges incident to a vertex
can simultaneously change by $1$.
We next define the weighted fusion rules of Fig.~\ref{weightedcalAtilmult:fig}.
\begin{figure}
    \begin{center}
    \includegraphics[scale=.6]{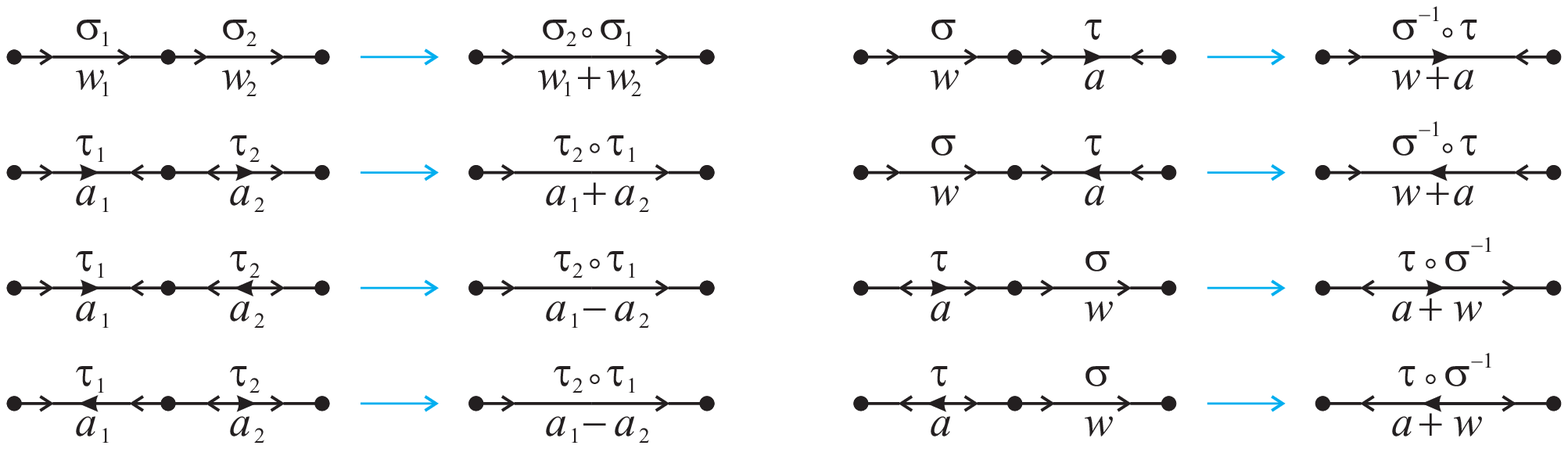}
    \end{center}
\vspace{-.5cm}\mycap{Edge fusion rules for graphs in $\calAtil_{\textrm{w}}$.\label{weightedcalAtilmult:fig}}
\end{figure}

\begin{rem}\label{not:all:mult:rem}
\emph{The fusion rules do not cover the case of two odd edges with internal orientations both
opposite to the external orientation after fusion, because this case will never occur for us.
For the fusion of two odd edges with discordant internal orientations, we note that
$a_1,a_2$ are $\pm{\textstyle{\frac12}}$, so $a_1-a_2=a_2-a_1$ in $\zetadue$.}
\end{rem}

The following fact, proved in Fig.~\ref{notassoc:fig},
\begin{figure}
    \begin{center}
    \includegraphics[scale=.6]{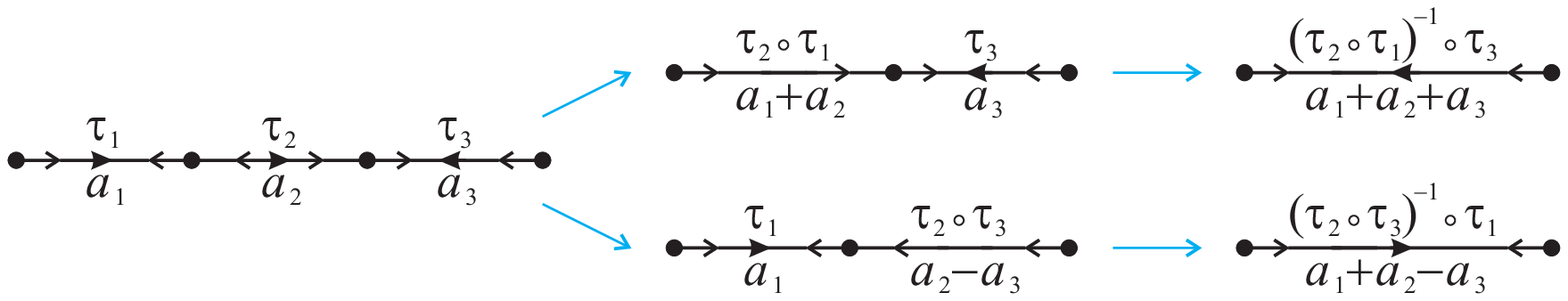}
    \end{center}
\vspace{-.5cm}\mycap{The fusion rules for graphs in $\calAtil_{\textrm{w}}$ are not associative.
In this example both the
internal orientation and the numerical weight $\pm{\textstyle{\frac12}}$ depend on the order in which fusions are performed;
note however that $(\tau_2\compo\tau_1)^{-1}\compo\tau_3=(\tau_2\compo\tau_3)^{-1}\compo\tau_1$, coherently
with the fact that the fusion rules for unweighted graphs \emph{are} associative.\label{notassoc:fig}}
\end{figure}
must be taken into account:

\begin{prop}
The weighted fusion rules of Fig.~\ref{weightedcalAtilmult:fig} are not associative.
\end{prop}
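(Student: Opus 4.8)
Since the statement is negative, the plan is simply to exhibit one counterexample, which is precisely what Fig.~\ref{notassoc:fig} provides; the task is to verify that it genuinely witnesses the failure of associativity. I would begin by isolating the only place where trouble can arise. An even edge contributes a numerical weight in $\zetadue$, and addition in $\zetadue$ is associative, so any failure must involve odd edges. I would therefore set up a chain of three consecutive odd edges, sharing two valence-$2$ vertices, with colours $\tau_1,\tau_2,\tau_3\in\permu^-_3$ and numerical weights $a_1,a_2,a_3\in\{\pm\frac12\}$, exactly as in Fig.~\ref{notassoc:fig}.

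Next I would carry out the two fusion orders prescribed by the rules of Fig.~\ref{weightedcalAtilmult:fig}. Fusing the first two edges yields, by the rule for two odd edges, an even edge whose numerical weight $a_1-a_2$ lies in $\zetadue$ (as recorded in Remark~\ref{not:all:mult:rem}); fusing this even edge with the third odd edge then yields an odd edge with colour $(\tau_2\compo\tau_1)^{-1}\compo\tau_3$, a definite internal orientation, and a numerical weight $\pm\frac12$. Performing the fusions in the opposite order produces an even edge of weight $a_2-a_3$, and then an odd edge of colour $(\tau_2\compo\tau_3)^{-1}\compo\tau_1$ carrying its own internal orientation and numerical weight.

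The decisive comparison is then the following. By the unweighted associativity already established in Proposition~\ref{Assoc:prop}, together with the identity $(\tau_2\compo\tau_1)^{-1}\compo\tau_3=(\tau_2\compo\tau_3)^{-1}\compo\tau_1$ valid in $\permu_3$ (the product of three transpositions is a transposition, hence its own inverse), the two resulting \emph{colours} coincide. Consequently any discrepancy must live in the weight data, and I would track the internal orientations carefully to observe that they disagree in the two orders, with a corresponding flip in the sign of the numerical weight $\pm\frac12$. Since the two outputs therefore differ as weighted edges, associativity fails, as displayed in Fig.~\ref{notassoc:fig}.

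The main obstacle is purely bookkeeping: correctly propagating the internal orientation through each weighted fusion. Unlike the colour, which composes associatively, the internal orientation assigned to the fused edge depends on the external orientation it inherits, and that external orientation is governed by which pair of edges is fused first. The delicate point is to confirm that no hidden cancellation restores the weight, that is, that the sign genuinely differs rather than agreeing by accident; this is why an explicit example, rather than a symbolic manipulation, is the cleanest route, and why one must stay within the cases the rules actually cover, as flagged in Remark~\ref{not:all:mult:rem}.
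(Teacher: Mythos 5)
Your proposal is correct and follows essentially the same route as the paper, which proves the proposition precisely by the counterexample of Fig.~\ref{notassoc:fig}: a chain of three odd edges fused in the two possible orders, where the resulting colours agree (via the identity $(\tau_2\compo\tau_1)^{-1}\compo\tau_3=(\tau_2\compo\tau_3)^{-1}\compo\tau_1$, exactly as you argue) while the internal orientation and the numerical weight $\pm\frac12$ do not, since each order hands the surviving internal orientation to a different extremal odd edge. Your added framing (that even edges alone cannot break associativity, and the explicit group-theoretic reason the colours coincide) is sound and matches the content of the paper's figure caption.
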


\subsection{Moves on weighted graphs}
We now introduce certain moves on $\calN_{\textrm{w}}$, to define which we also use $\calAtil_{\textrm{w}}$.
To begin we call \emph{elementary move} on $\calAtil_{\textrm{w}}$ one of
$I, \overline{I},\duerom,\overline{\duerom}, M, \overline{M}$
from Figg.~\ref{vertexmovesfromminusplus:fig}
\begin{figure}
    \begin{center}
    \includegraphics[scale=.6]{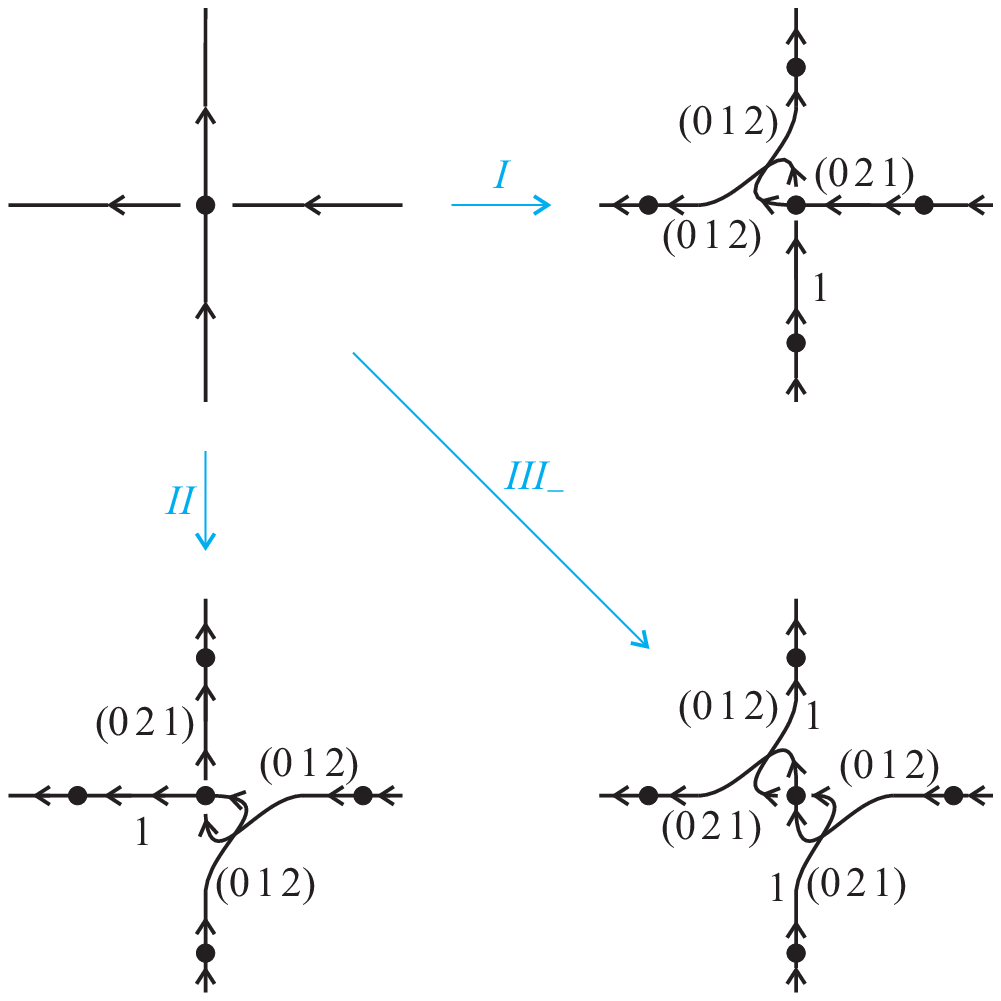}
\qquad
    \includegraphics[scale=.6]{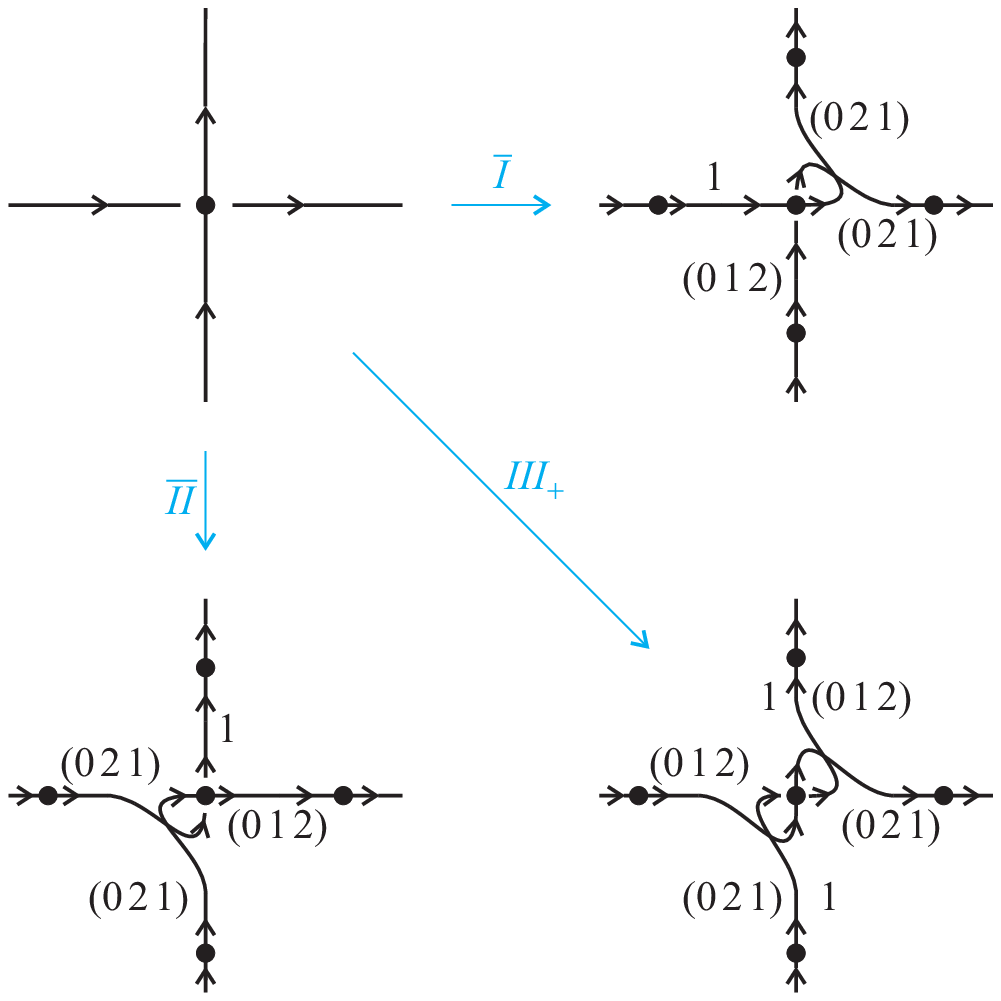}
    \end{center}
\vspace{-.5cm}\mycap{Moves on $\calAtil_{\textrm{w}}$ derived from those on $\calNtil_{\textrm{w}}$.\label{vertexmovesfromminusplus:fig}}
\end{figure}
and~\ref{MMbarNNbarwitharrows:fig}.
\begin{figure}
    \begin{center}
    \includegraphics[scale=.6]{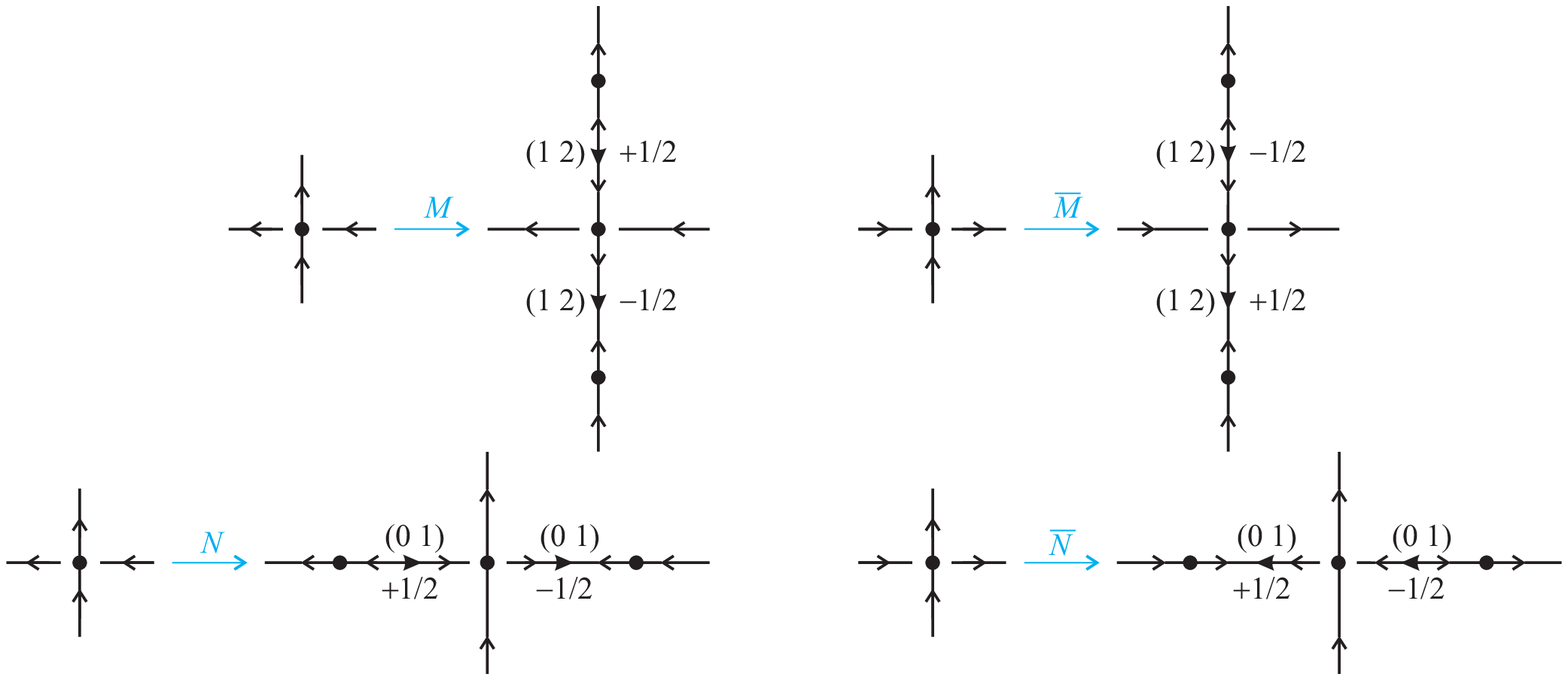}
    \end{center}
\vspace{-.5cm}\mycap{More moves on weighted graphs.\label{MMbarNNbarwitharrows:fig}}
\end{figure}
The pictures contain more moves whose r\^ole will be explained soon.

\begin{rem}
\begin{itemize}
\item \emph{In the symbols denoting the moves, overlining and subscripts are used to indicate
the type of index transition $\pm1\mapsto\pm1$;}
\item \emph{The moves $I$ and $\duerom$ are those of Fig.~\ref{vertexmoves:fig} and at the level of $\calN_{\textrm{w}}$, namely under the
\emph{associative} fusion rules for $\calNtil_{\textrm{w}}$, we have
$\overline{I}=I^{-1}$ and $\overline{\duerom}=\duerom^{-1}$; moreover
${\trerom}_-={I}\!\cdot\!\overline{{\duerom}}={\duerom}\!\cdot\!\overline{{I}}$
and ${\trerom}_+=\overline{{I}}\!\cdot\!{\duerom}=\overline{{\duerom}}\!\cdot\!{I}$;}
\item \emph{In the product of two moves that to the left applies first;
not all products make sense.}
\end{itemize}
\end{rem}

We now establish some results concerning relations between moves:

\begin{prop}\label{elem:eq:prop}
Consider a vertex as in Fig.~\ref{Wvertices:fig}, apply to it one of the following combination
of weighted moves, and locally apply near the vertex the weighted fusion rules of
Fig.~\ref{weightedcalAtilmult:fig}; then the result is the same as indicated:
$$M\!\cdot\! \overline{M}=\text{\emph{id}}_-,\quad
N\!\cdot\! \overline{N}=\text{\emph{id}}_-,\quad
\overline{M}\!\cdot\! M=\text{\emph{id}}_+,\quad
\overline{N}\!\cdot\! N=\text{\emph{id}}_+,$$
$$\trerom_-\!\cdot\! M=N\!\cdot\! \trerom_+,\quad
\trerom_+\!\cdot\! \overline{M}=\overline{N}\!\cdot\! \trerom_-,\quad
M\!\cdot\! \overline{N}=N\!\cdot\!\overline{M},\quad
\overline{M}\!\cdot\! N=\overline{N}\!\cdot\! M.$$
\end{prop}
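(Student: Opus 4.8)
The plan is to verify each of the eight relations by a direct local computation at the vertex of Fig.~\ref{Wvertices:fig}, keeping track of the three data carried by every edge of a graph in $\calAtil_{\textrm{w}}$: the $\permu_3$-colour, the internal orientation, and the numerical weight in $G$. Each elementary move replaces the central configuration by a new one in which the incident edges acquire valence-$2$ vertices, so composing the two moves on the left-hand side of a relation produces a short chain of valence-$2$ vertices on each incident edge, which I would then collapse by the weighted fusion rules of Fig.~\ref{weightedcalAtilmult:fig}. The assertion to be checked is that the resulting colour, internal orientation and numerical weight on every edge coincide with those produced by the right-hand side, where for the first four relations the right-hand side is the identity on a vertex of the indicated index.

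First I would dispose of the colour component, which is the easy part. By Proposition~\ref{Assoc:prop} the \emph{unweighted} fusion is associative, so the composite $\permu_3$-colour on each edge is independent of the fusion order and is just the ordered composition of the permutation-labels introduced by the successive moves, while the index of the central vertex transforms as dictated by the subscripts and overlines. For the four relations of the first row this composite must return the original colour and index, while for the four relations of the second row the two sides must yield equal colours edge by edge. Both are finite checks in $\permu_3$, and the identities $\sigma^{-1}\compo\tau=\tau\compo\sigma$ and $\tau\compo\sigma^{-1}=\sigma\compo\tau$ recorded after Fig.~\ref{calAtilmult:fig} are precisely what make the two bracketings of each side agree.

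The substance of the proof is the numerical part, where the weighted fusion is \emph{not} associative. Here I would fix, once and for all and separately for each relation, the order in which the valence-$2$ vertices are fused (from the outside inward toward the central vertex), and then compare internal orientations and weights in $G$ on both sides after reducing modulo the $1$-coboundary ambiguity that lets all four weights at a vertex change simultaneously by $1$. Since every weight introduced by a move is $0$, $1$ or $\pm\frac12$, each edge accumulates only a bounded sum; on the odd edges, where the non-associativity of Fig.~\ref{notassoc:fig} could in principle produce an order-dependent $\pm\frac12$, the remark that $a_1-a_2=a_2-a_1$ in $\zetadue$ for $a_1,a_2=\pm\frac12$ (Remark~\ref{not:all:mult:rem}) forces those discrepancies to disappear in $\zetadue$. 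With this under control, $M\!\cdot\!\overline{M}=\text{\emph{id}}_-$ and its three companions reduce to the cancellation of two opposite half-twists, and the commutation relations $\trerom_-\!\cdot\! M=N\!\cdot\!\trerom_+$, $M\!\cdot\!\overline{N}=N\!\cdot\!\overline{M}$ and their bars reduce to matching the single surviving weight on the edge whose endpoints change index.

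The main obstacle is exactly this interplay between the non-associativity of weighted fusion and the requirement that the commutation relations hold on the nose. I would therefore carry out one representative case in full --- say $\trerom_-\!\cdot\! M=N\!\cdot\!\trerom_+$, expanding $\trerom_-=I\!\cdot\!\overline{\duerom}$ and $\trerom_+=\overline{I}\!\cdot\!\duerom$ and displaying the intermediate graph after each move and each fusion --- and check that the final colours, internal orientations and numerical weights match. The remaining cases follow by the same bookkeeping, shortened by the evident orientation-reversing symmetry that exchanges $+1\leftrightarrow-1$, bars each move ($M\leftrightarrow\overline{M}$, $N\leftrightarrow\overline{N}$), and thereby pairs the relations within each row.
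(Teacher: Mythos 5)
Your proposal follows essentially the same route as the paper's proof: the paper establishes these identities by direct local verification in pictures (it draws out $M\cdot\overline{M}=\mathrm{id}_-$, $\trerom_-\cdot M=N\cdot\trerom_+$ and $M\cdot\overline{N}=N\cdot\overline{M}$), using exactly the two ingredients you isolate --- the weighted fusion rules of Fig.~\ref{weightedcalAtilmult:fig} and the convention that numerical weights are read modulo $\zetadue$-coboundaries --- and then declares the remaining equalities ``similar,'' just as you do via the bar/index symmetry. Your preliminary splitting-off of the $\permu_3$-colour component, justified by the associativity of unweighted fusion (Proposition~\ref{Assoc:prop}), is left implicit in the paper but is correct and a reasonable way to organize the bookkeeping; likewise your expansion $\trerom_-=I\cdot\overline{\duerom}$, $\trerom_+=\overline{I}\cdot\duerom$ is legitimate since those factorizations take place inside $\calNtil_{\textrm{w}}$, where fusion is associative.

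One caveat: your parenthetical claim that any order-dependence coming from non-associativity ``disappears in $\zetadue$'' because $a_1-a_2=a_2-a_1$ for $a_1,a_2=\pm\frac12$ is not right. Fig.~\ref{notassoc:fig} exists precisely to show that for a triple fusion of odd edges \emph{both} the internal orientation and the numerical weight genuinely depend on the order, and Remark~\ref{not:all:mult:rem} concerns the well-definedness of a \emph{single} fusion of two discordantly oriented odd edges, not of iterated fusions. This flaw does not sink your argument, because you fix a definite fusion order and compare the two sides of each relation under that order --- which is exactly what the paper's figures do --- but the stronger statement, that the outcome of local fusion after a sequence of elementary weighted moves is independent of the fusion order, is only established later (Proposition~\ref{assoc:moves:for:W:prop} and Corollary~\ref{loc:assoc:cor}, whose proofs are independent of the present proposition), and you should appeal to that rather than to Remark~\ref{not:all:mult:rem} if you want the relations to hold for an arbitrary order of fusion.
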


\begin{rem}
\emph{Since the fusion rules in $\calAtil_{\textrm{w}}$ are \emph{not} associative,
these equalities \emph{do not} imply that at the level $\calA_{\textrm{w}}$ we have
the relations
$$\overline{M}=M^{-1},\quad \overline{N}=N^{-1},\quad N=\trerom_-\!\cdot\! M\!\cdot\! \trerom_+$$
but these relations do make sense and hold in a restricted context, see below.}
\end{rem}

The proofs of some of the equalities in Proposition~\ref{elem:eq:prop} are given in
Figg.~\ref{MMbareq:fig} to~\ref{MNcommute:fig}; they all crucially use the
weighted fusion rules of Fig.~\ref{weightedcalAtilmult:fig} and the convention that
weights are viewed up to $\zetadue$-coboundaries; the other proofs are similar.
\begin{figure}
    \begin{center}
    \includegraphics[scale=.6]{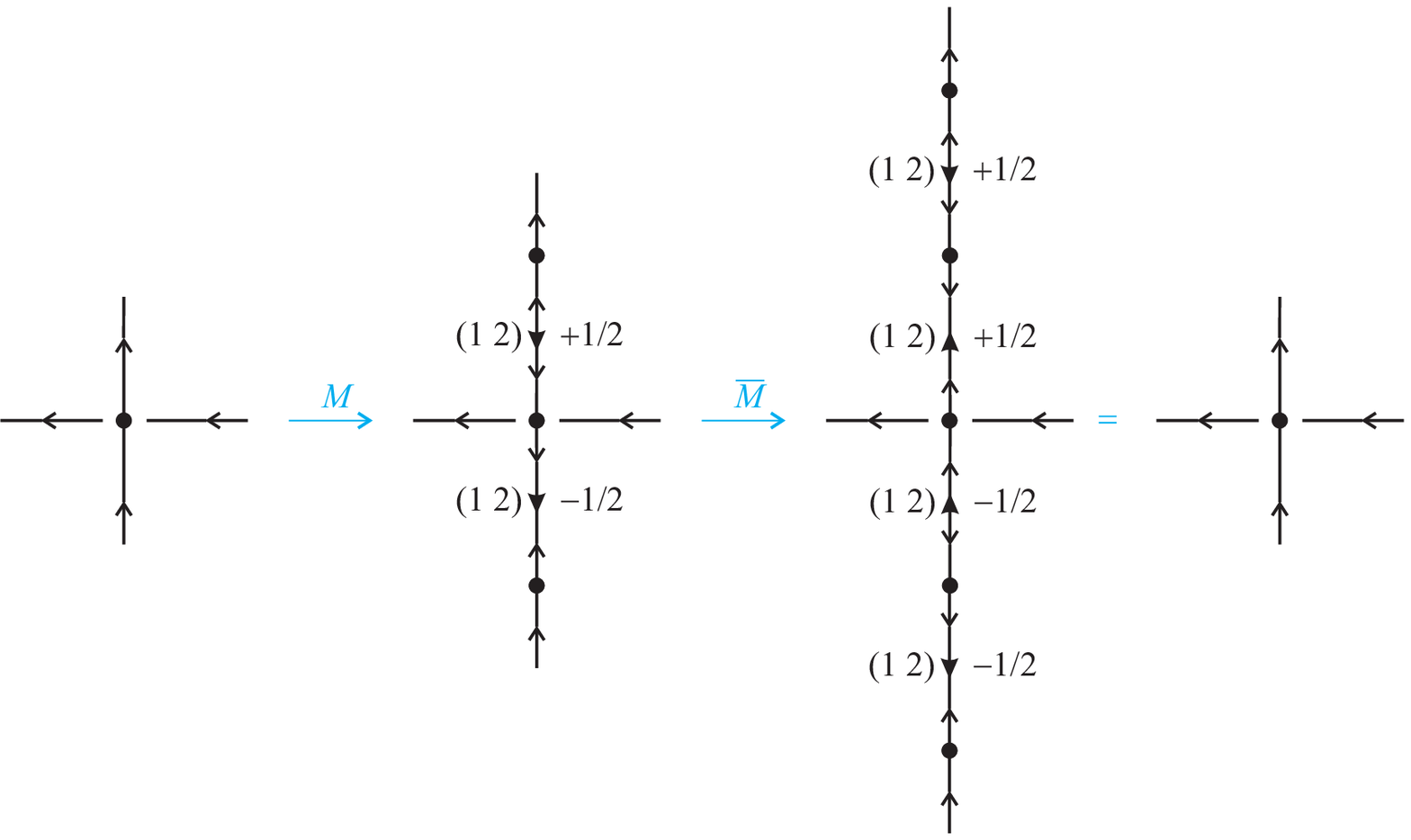}
    \end{center}
\vspace{-.5cm}\mycap{Proof that $M\!\cdot\! \overline{M}=\textrm{id}_-$ under local application of weighted fusion.\label{MMbareq:fig}}
\end{figure}
\begin{figure}
    \begin{center}
    \includegraphics[scale=.6]{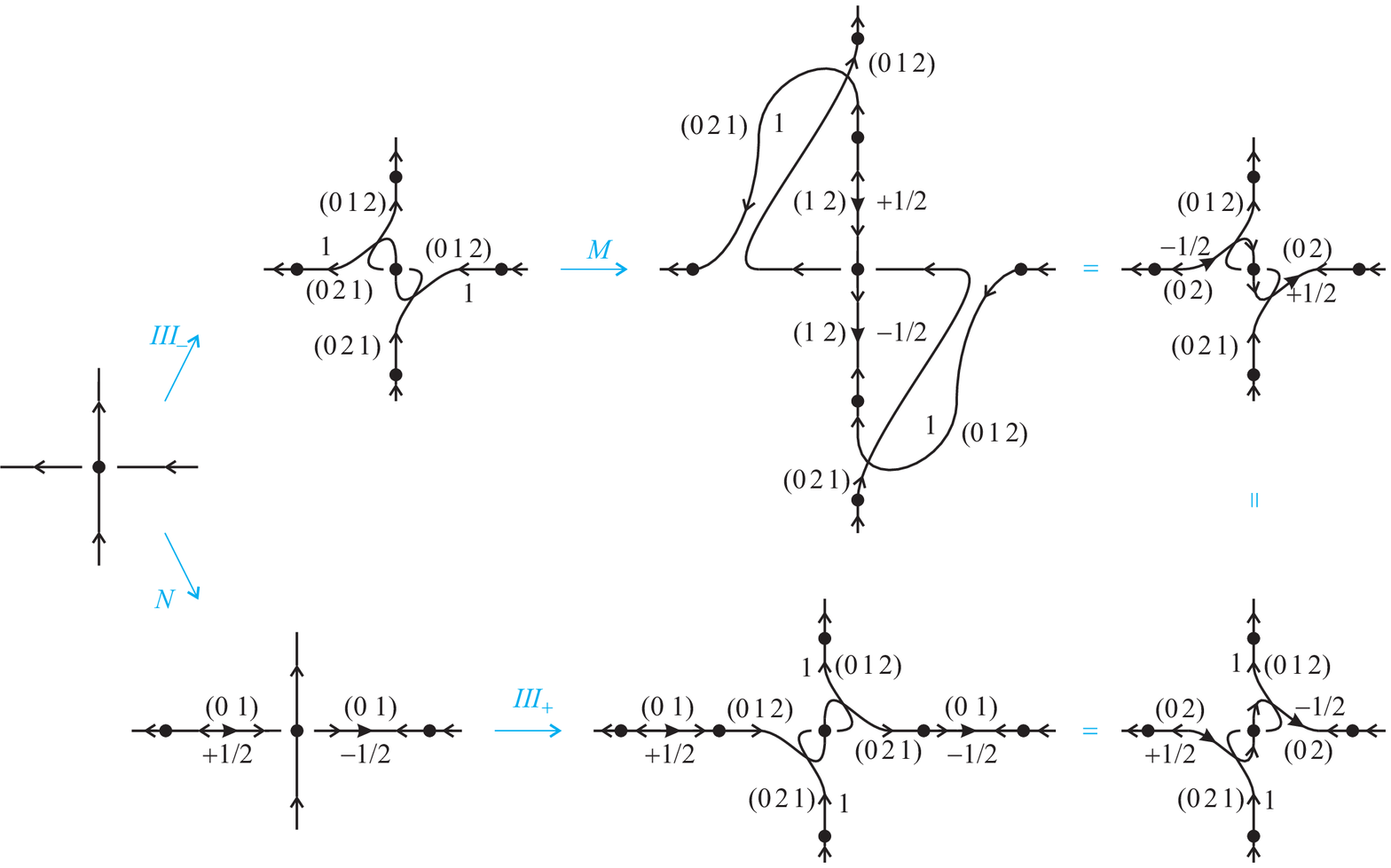}
    \end{center}
\vspace{-.5cm}\mycap{Proof that $\trerom_-\!\cdot\! M=N\!\cdot\! \trerom_+$ under local application of weighted fusion.\label{MgeneratesN:fig}}
\end{figure}
\begin{figure}
    \begin{center}
    \includegraphics[scale=.6]{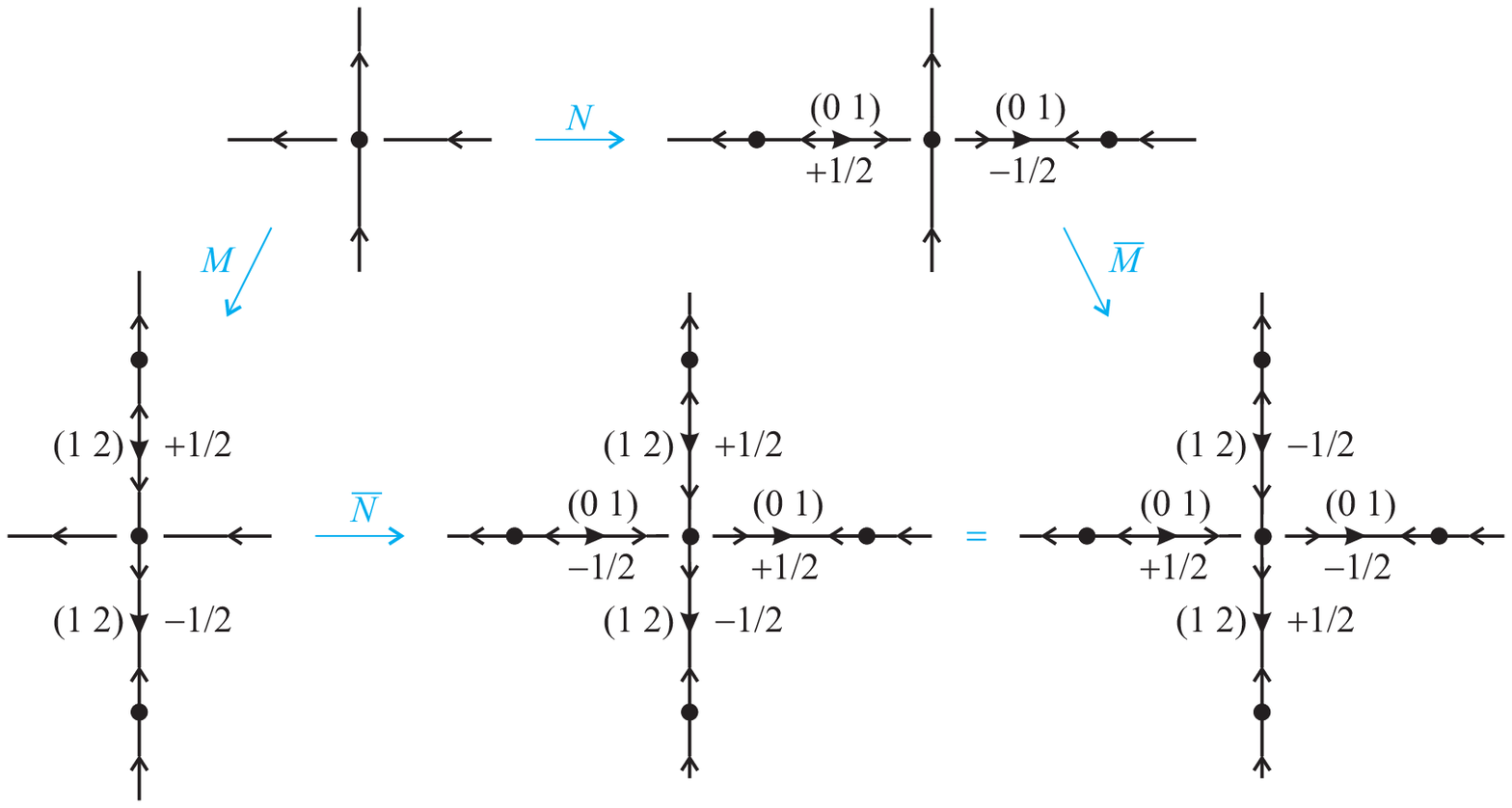}
    \end{center}
\vspace{-.5cm}\mycap{Proof that $M\!\cdot\! \overline{N}=N\!\cdot\!\overline{M}$ under local application of weighted fusion.\label{MNcommute:fig}}
\end{figure}

We now call \emph{weighted move} on a vertex as in Fig.~\ref{Wvertices:fig} any sequence
of elementary weighted moves (not followed by any fusion). We first have the following:

\begin{prop}\label{assoc:moves:for:W:prop}
Take $\Gamma\in\calN_{\text{\emph{w}}}$ and apply to each of its vertices a weighted move to get
$\widetilde\Gamma\in\calAtil$. Suppose that by applying (in some order) the weighted fusion rules of
Fig.~\ref{weightedcalAtilmult:fig} one gets $\Theta\in\calN_{\text{\emph{w}}}$.
Then the system of weights on $\Theta$
is well-defined independently of the order of application of the weighted fusion rules.
\end{prop}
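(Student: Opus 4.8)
The plan is to split the statement into its combinatorial and its geometric content, and to settle each with a result already available: the associativity of the \emph{unweighted} fusion rules (Proposition~\ref{Assoc:prop}) and the rigidity of the weight-to-frame correspondence (the third assertion of Proposition~\ref{coboundary:use:prop}). First I would observe that, by Proposition~\ref{Assoc:prop}, the unweighted graph underlying $\Theta$, together with its spine $P$, its pre-branching $\omega$ and its weak branching $b$, is determined by $\widetilde\Gamma$ independently of the order in which edges are fused. In particular every fusion sequence leaves $S(P)$ fixed and produces on it one and the same frame $\varphi(P,\omega,b)=(\nu,\mu)$, so the only thing that could depend on the order is the system of $\zetadue$-weights recording the extra twists of $\mu$.

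The heart of the argument is then the local claim that each single weighted fusion of Figure~\ref{weightedcalAtilmult:fig} preserves, up to homotopy on $S(P)$, the frame carried by the graph. Granting this, any two fusion sequences starting from the fixed $\widetilde\Gamma$ carry its frame to the same homotopy class on $S(P)$; since on $\Theta\in\calN_{\textrm{w}}$ all edges are even, this frame is exactly $(\nu,\beta(\mu))$ for the $\zetadue$-cochain $\beta$ read off the weights. By the third assertion of Proposition~\ref{coboundary:use:prop} — whose proof only uses frames on $S(P)$ up to homotopy, and so applies verbatim here — two such cochains induce homotopic frames precisely when they differ by a coboundary. Hence the weight systems produced by different orders all agree up to coboundary, and since we view weights up to $\zetadue$-coboundaries this is exactly the assertion that the weight system on $\Theta$ is well defined.

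To prove the local claim I would argue as in Proposition~\ref{numuadditivity:prop}, checking case by case that the frame produced by a fusion is homotopic to the concatenation of the two edge frames: the cases of an $\emptyset$-edge and of two edges of opposite type reduce to that proposition, so the genuinely new cases are those involving odd edges. The key point to extract is that the frame carried by an odd edge depends on its internal orientation and its $\pm\frac12$ numerical weight only through their combination, so that $(+\frac12,o)$ and $(-\frac12,\bar o)$ carry the \emph{same} frame (reversing the direction in which the half-twist is measured flips the recorded sign). This is what makes the fusion rules frame-invariant despite being non-associative as bookkeeping. When two odd edges fuse to an even one, the resulting weight is the sum computed in $G=\left(\frac12\cdot\matZ\right)/_{\!2\matZ}$, and by Remark~\ref{not:all:mult:rem} the sign ambiguity $a_1-a_2$ versus $a_2-a_1$ is invisible in $\zetadue$, so the even weight, hence the frame along the fused edge, is unambiguous.

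The main obstacle is precisely this last, local verification. The non-associativity displayed in Figure~\ref{notassoc:fig} shows that the $\pm\frac12$-weight and the internal orientation of an intermediate \emph{odd} edge genuinely depend on the order of fusion, so one cannot simply transport a single numerical invariant through the fusions. The resolution is conceptual rather than computational: the order-dependent data are a gauge for one and the same frame, and the invariant that survives is the frame up to homotopy, which each fusion preserves individually. Once the local claim is established, the global statement follows with no further case analysis, since every fusion sequence computes the same homotopy class of frame on the order-independent spine $S(P)$, and that class pins down the weights on $\Theta$ up to coboundary.
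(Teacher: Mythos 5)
Your reduction to a ``local claim'' is the right instinct, but the claim is about an object that neither you nor the paper ever defines: the frame carried by a graph in $\calAtil_{\textrm{w}}$. The frame $\varphi(P,\omega,b)=(\nu,\mu)$ of Definition~\ref{main:defn} is constructed only for weakly branched spines, that is, when every edge has consistently oriented ends (colour $\emptyset,\pm1$); no frame is ever assigned along an odd edge, and the decorations of such an edge (internal orientation, numerical weight $\pm\frac12$) enter the theory purely as bookkeeping, whose meaning is fixed only by the moves that create them and by the fusion rules of Fig.~\ref{weightedcalAtilmult:fig} --- which are \emph{definitions}, not computations of anything previously constructed. Consequently your ``key point,'' that $(+\frac12,o)$ and $(-\frac12,\bar{o})$ carry the same frame, is an assertion about an object that does not yet exist. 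To make your strategy run you would have to (i) fix, for each odd edge, which of the two rel-endpoints homotopy classes of frame paths joining the two vertex frames is encoded by each pair (internal orientation, numerical weight), (ii) prove that every rule of Fig.~\ref{weightedcalAtilmult:fig} computes concatenation of such classes, and (iii) prove that the elementary weighted moves of Figg.~\ref{vertexmovesfromminusplus:fig} and~\ref{MMbarNNbarwitharrows:fig} produce decorations encoding the correct classes. This is exactly the case analysis you were hoping to bypass; it is the geometric counterpart of the paper's induction over the concatenations (\ref{pure:conc:1:eq})--(\ref{pure:conc:4:eq}), and declaring the resolution ``conceptual rather than computational'' does not discharge it. (Your appeal to the third assertion of Proposition~\ref{coboundary:use:prop} is fine once one is on $\calN_{\textrm{w}}$; the gap is entirely upstream of that step.)

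There is a second, independent gap: as the paper notes at the start of its proof, the statement implicitly claims that \emph{no} fusion order ever reaches the configuration excluded by Remark~\ref{not:all:mult:rem}, namely two odd edges whose internal orientations both oppose the external orientation after fusion. Your hypothesis provides one order that terminates in $\calN_{\textrm{w}}$, and your frame argument compares terminating orders only; it says nothing about an order that gets stuck at an undefined fusion, in which case ``well-defined independently of the order'' would fail in a different way. The paper rules this out by showing that the concatenations actually produced by elementary weighted moves have only the special forms (\ref{first:etil:eq}) and (\ref{second:etil:eq}), in which the forbidden adjacencies never occur; some enumeration of this kind is unavoidable in any proof. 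A more minor difference: the paper's induction pins down the numerical weight of each edge of $\Theta$ exactly, whereas your argument can only determine the weight system up to a $\zetadue$-coboundary --- acceptable given the stated convention on weights, but strictly weaker than what the paper establishes.
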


\begin{proof}
The statement contains the implicit claim that the rules
of Fig.~\ref{weightedcalAtilmult:fig}
suffice to go from
$\widetilde\Gamma$ to some $\Theta$, namely that no
situation as in Remark~\ref{not:all:mult:rem} occurs.

We prove the proposition ignoring the colours in $\permu_3$, because we already know
that fusion is associative at the $\permu_3$ level. We concentrate on a single
edge of $\Gamma$ and we imagine $e$ is initially drawn in front of us
with orientation from left to right and weight $w\in\{0,1\}$. Replacing external
orientations of edges by letters $r/\ell$ and internal orientations by $R/L$
we then have in $\Gamma$ an initial edge $\frac{rr}w$ that gets replaced in
$\widetilde\Gamma$ by a concatenation $\widetilde e$ of edges of the form
$$\textstyle{\frac{rr}{u}},\quad\textstyle{\frac{\ell \ell}u},\quad\textstyle{\frac{rR\ell}a},
\quad\textstyle{\frac{rL\ell}a},\quad\textstyle{\frac{\ell Rr}a},\quad\textstyle{\frac{\ell Lr}a}.$$
A careful examination of the elementary weighted moves
actually shows that the possibilities
for $\widetilde e$ are only as follows:
\begin{equation}\label{first:etil:eq}
\left(\left(\textstyle{\frac{rr}u}\right)^{\!*}\!\cdot\!
\textstyle{\frac{rR\ell}a}\!\cdot\! \left(\textstyle{\frac{\ell \ell}u}\right)^{\!*}\!\cdot\!\textstyle{\frac{\ell Lr}a}\right)^{\!*}
\!\cdot\!\left(\textstyle{\frac{rr}u}\right)^{\!*}\!\cdot\! \textstyle{\frac{rr}w}\!\cdot\!\left(\textstyle{\frac{rr}u}\right)^{\!*}\!\cdot\!
\left(\textstyle{\frac{rL\ell}a}\!\cdot\!\left(\textstyle{\frac{\ell \ell}u}\right)^{\!*}\!\cdot\!
\textstyle{\frac{\ell Rr}a}\!\cdot\!\left(\textstyle{\frac{rr}u}\right)^{\!*}\right)^{\!*}
\end{equation}
\begin{equation}\label{second:etil:eq}
\left(\textstyle{\frac{\ell \ell}u}\right)^{\!*}\!\cdot\!\textstyle{\frac{\ell Lr}a}\!\cdot\!
(\ref{first:etil:eq})
\!\cdot\!\textstyle{\frac{rL\ell}a}\!\cdot\!\left(\textstyle{\frac{\ell \ell}u}\right)^{\!*}
\end{equation}
where $y^{\!*}$ means any number (including 0) of repetitions of a string $y$, and
the weight $u$ (respectively, $a$) can have a different value
in $\{0,1\}$ (respectively, $\pm{\textstyle{\frac12}}$) each time it appears.
It is then clear that we never get any of the adjacencies
$\frac{rL\ell}a\!\cdot\!\frac{\ell Lr}a$ or $\frac{\ell Rr}a\!\cdot\!\frac{rR\ell}a$ not contemplated by the weighted fusion rules of
Fig.~\ref{weightedcalAtilmult:fig}. Moreover these rules can be expressed as
$$\textstyle{\frac{rr}{u_1}}\!\cdot\!\textstyle{\frac{rr}{u_2}}=\textstyle{\frac{rr}{u_1+u_2}},\quad
\textstyle{\frac{rr}u}\!\cdot\!\textstyle{\frac{rD\ell}a}=\textstyle{\frac{rD\ell}{u+a}},\quad \textstyle{\frac{\ell Dr}a}\!\cdot\!\textstyle{\frac{rr}u}=\textstyle{\frac{\ell Dr}{a+u}},$$
$$\textstyle{\frac{\ell \ell}{u_1}}\!\cdot\!\textstyle{\frac{\ell \ell}{u_2}}=\textstyle{\frac{\ell \ell}{u_1+u_2}},\quad
\textstyle{\frac{\ell \ell}u}\!\cdot\!\textstyle{\frac{\ell Dr}a}=\textstyle{\frac{\ell Dr}{u+a}},\quad \textstyle{\frac{rD\ell}a}\!\cdot\!\textstyle{\frac{\ell \ell}u}=\textstyle{\frac{rD\ell}{a+u}},$$
$$\textstyle{\frac{\ell Lr}{a_1}}\!\cdot\!\textstyle{\frac{rL\ell}{a_2}}=\textstyle{\frac{\ell \ell}{a_1+a_2}},\quad
\textstyle{\frac{rR\ell}{a_1}}\!\cdot\!\textstyle{\frac{\ell Rr}{a_2}}=\textstyle{\frac{rr}{a_1+a_2}},$$
$$\textstyle{\frac{\ell Lr}{a_1}}\!\cdot\!\textstyle{\frac{rR\ell}{a_2}}=\textstyle{\frac{\ell \ell}{a_1-a_2}},\quad
\textstyle{\frac{\ell Rr}{a_1}}\!\cdot\!\textstyle{\frac{rL\ell}{a_2}}=\textstyle{\frac{\ell \ell}{a_1-a_2}},\quad
\textstyle{\frac{rR\ell}{a_1}}\!\cdot\!\textstyle{\frac{\ell Lr}{a_2}}=\textstyle{\frac{rr}{a_1-a_2}},\quad
\textstyle{\frac{rL\ell}{a_1}}\!\cdot\!\textstyle{\frac{\ell Rr}{a_2}}=\textstyle{\frac{rr}{a_1-a_2}}.$$
We must show that by applying them as long as possible to~(\ref{first:etil:eq}) or~(\ref{second:etil:eq}) we get a well-defined result.
Note first that each edge $\frac{\ell \ell}u$ or $\frac{rr}u$ can be ignored;
in fact, its contribution is independent of the time it is involved in weighted fusions, because:
\begin{itemize}
\item On the internal orientation it acts identically to the right and to the left;
\item Its numerical weight is in $\{0,1\}$, so it is insensitive to later sign change.
\end{itemize}
We then have to deal with concatenations of the form
\begin{equation}\label{pure:conc:1:eq}
\textstyle{\frac{rR\ell}{a_1}}\!\cdot\!
\textstyle{\frac{\ell Lr}{b_1}}\!\cdot\!
\ldots
\textstyle{\frac{rR\ell}{a_k}}\!\cdot\!
\textstyle{\frac{\ell Lr}{b_k}}\!\cdot\!
\textstyle{\frac{rL\ell}{d_h}}\!\cdot\!
\textstyle{\frac{\ell Rr}{c_h}}\!\cdot\!
\ldots
\textstyle{\frac{rL\ell}{d_1}}\!\cdot\!
\textstyle{\frac{\ell Rr}{c_1}},\end{equation}
\begin{equation}\label{pure:conc:2:eq}
\textstyle{\frac{\ell Lr}{b_0}}\!\cdot\!
\textstyle{\frac{rR\ell}{a_1}}\!\cdot\!
\textstyle{\frac{\ell Lr}{b_1}}\!\cdot\!
\ldots
\textstyle{\frac{rR\ell}{a_k}}\!\cdot\!
\textstyle{\frac{\ell Lr}{b_k}}\!\cdot\!
\textstyle{\frac{rL\ell}{d_h}}\!\cdot\!
\textstyle{\frac{\ell Rr}{c_h}}\!\cdot\!
\ldots
\textstyle{\frac{rL\ell}{d_1}}\!\cdot\!
\textstyle{\frac{\ell Rr}{c_1}}\!\cdot\!
\textstyle{\frac{rL\ell}{d_0}}\end{equation}
but we also consider the following (that arise starting
from an edge $\frac{\ell \ell}{w}$ in $\Gamma$):
\begin{equation}\label{pure:conc:3:eq}
\textstyle{\frac{\ell Lr}{a_1}}\!\cdot\!
\textstyle{\frac{rR\ell}{b_1}}\!\cdot\!
\ldots
\textstyle{\frac{\ell Lr}{a_k}}\!\cdot\!
\textstyle{\frac{rR\ell}{b_k}}\!\cdot\!
\textstyle{\frac{\ell Rr}{d_h}}\!\cdot\!
\textstyle{\frac{rL\ell}{c_h}}\!\cdot\!
\ldots
\textstyle{\frac{\ell Rr}{d_1}}\!\cdot\!
\textstyle{\frac{rL\ell}{c_1}},\end{equation}
\begin{equation}\label{pure:conc:4:eq}
\textstyle{\frac{rR\ell}{b_0}}\!\cdot\!
\textstyle{\frac{\ell Lr}{a_1}}\!\cdot\!
\textstyle{\frac{rR\ell}{b_1}}\!\cdot\!
\ldots
\textstyle{\frac{\ell Lr}{a_k}}\!\cdot\!
\textstyle{\frac{rR\ell}{b_k}}\!\cdot\!
\textstyle{\frac{\ell Rr}{d_h}}\!\cdot\!
\textstyle{\frac{rL\ell}{c_h}}\!\cdot\!
\ldots
\textstyle{\frac{\ell Rr}{d_1}}\!\cdot\!
\textstyle{\frac{rL\ell}{c_1}}\!\cdot\!
\textstyle{\frac{\ell Rr}{d_0}}.\end{equation}
We now claim that, regardless of the order in which the weighted fusion rules are applied,
the numerical edge weight on the final result is
$$\textstyle{\sum\limits_{i=1,\ldots,k}}a_i-\textstyle{\sum\limits_{i=1,\ldots,k}}b_i
+\textstyle{\sum\limits_{j=1,\ldots,h}}c_j-\textstyle{\sum\limits_{j=1,\ldots,h}}d_j
\quad\textrm{for}\ (\ref{pure:conc:1:eq})\ \textrm{and}\ (\ref{pure:conc:3:eq}),$$
$$\textstyle{\sum\limits_{i=1,\ldots,k}}a_i-\textstyle{\sum\limits_{i=0,\ldots,k}}b_i+\textstyle{\sum\limits_{j=1,\ldots,h}}c_j-
\textstyle{\sum\limits_{j=0,\ldots,h}}d_j
\quad\textrm{for}\ (\ref{pure:conc:2:eq})\ \textrm{and}\ (\ref{pure:conc:4:eq}).$$
The claim of course implies the conclusion, and we can prove it by induction on half the length of the concatenation.
The base step of the induction is with length $0$ in cases~(\ref{pure:conc:1:eq}) and~(\ref{pure:conc:3:eq}), so it is empty,
and with length $2$ in cases~(\ref{pure:conc:2:eq}) and~(\ref{pure:conc:4:eq}), so it follows directly
from the weighted fusion rules (remember that $-b_0-d_0=b_0+d_0$ because both $b_0$ and $d_0$ are $\pm{\textstyle{\frac12}}$).
For the inductive step we must analyze what happens by applying one weighted fusion to one of~(\ref{pure:conc:1:eq})-(\ref{pure:conc:4:eq}).
In all four cases we can distinguish between the ``central'' fusion
$\frac{\ell Lr}{b_k}\!\cdot\!\frac{rL\ell}{d_h}\to\frac{\ell\ell}{b_k+d_h}$ or
$\frac{rR\ell}{b_k}\!\cdot\!\frac{\ell Rr}{d_h}\to\frac{rr}{b_k+d_h}$ and any
``lateral'' fusion. Dealing with lateral fusions is easier, and we make it explicit
only for case~(\ref{pure:conc:1:eq}) and for a fusion performed to the left of the centre;
this fusion will  be
$\frac{rR\ell}{a_t}\!\cdot\!\frac{\ell Lr}{b_t}\to\frac{rr}{a_t-b_t}$ or
$\frac{\ell Lr}{b_t}\!\cdot\!\frac{rR\ell}{a_{t+1}}\to\frac{\ell\ell}{b_t-a_{t+1}}=\frac{\ell\ell}{a_{t+1}-b_t}$;
then we can forget the fused edge (remembering that its weight
must be added to the final one) so we are led to a shorter concatenation of type~(\ref{pure:conc:1:eq});
the inductive assumption then easily implies the conclusion.

Turning to the central fusion, in case~(\ref{pure:conc:1:eq}) forgetting the fused edge
we get the shorter concatenation of type~(\ref{pure:conc:4:eq})
$$\textstyle{\frac{rR\ell}{a_1}}\!\cdot\!
\textstyle{\frac{\ell Lr}{b_1}}\!\cdot\!
\textstyle{\frac{rR\ell}{a_2}}\!\cdot\!
\ldots
\textstyle{\frac{\ell Lr}{b_{k-1}}}\!\cdot\!
\textstyle{\frac{rR\ell}{a_k}}\!\cdot\!
\textstyle{\frac{\ell Rr}{c_h}}\!\cdot\!
\textstyle{\frac{rL\ell}{d_{h-1}}}\!\cdot\!
\ldots
\textstyle{\frac{\ell Rr}{c_2}}\!\cdot\!
\textstyle{\frac{rL\ell}{d_1}}\!\cdot\!
\textstyle{\frac{\ell Rr}{c_1}}$$
whence, by the inductive assumption, independently of the order, a final weight
\begin{equation}\label{doubleformula:eq}
\begin{array}{cl}
& b_k+d_h+\textstyle{\sum\limits_{i=1,\ldots,k-1}}b_i-
\textstyle{\sum\limits_{i=1,\ldots,k}}a_i+\textstyle{\sum\limits_{j=1,\ldots,h-1}}d_j-\textstyle{\sum\limits_{j=1,\ldots,h}}c_j\\
= & \textstyle{\sum\limits_{i=1,\ldots,k}}a_i-\textstyle{\sum\limits_{i=1,\ldots,k}}b_i+
\textstyle{\sum\limits_{j=1,\ldots,h}}c_j-\textstyle{\sum\limits_{j=1,\ldots,h}}d_j
\end{array}
\end{equation}
as desired. The central fusion in~(\ref{pure:conc:2:eq}) gives the type~(\ref{pure:conc:3:eq}) concatenation
$$\textstyle{\frac{\ell Lr}{b_0}}\!\cdot\!
\textstyle{\frac{rR\ell}{a_1}}\!\cdot\!
\ldots
\textstyle{\frac{\ell Lr}{b_{k-1}}}\!\cdot\!
\textstyle{\frac{rR\ell}{a_k}}\!\cdot\!
\textstyle{\frac{\ell Rr}{c_h}}\!\cdot\!
\textstyle{\frac{rL\ell}{d_{h-1}}}\!\cdot\!
\ldots
\textstyle{\frac{\ell Rr}{c_1}}\!\cdot\!
\textstyle{\frac{rL\ell}{d_0}}$$
whence final weight
precisely as in~(\ref{doubleformula:eq}), as desired.
The central fusion in cases~(\ref{pure:conc:3:eq}) and~(\ref{pure:conc:4:eq}) is similarly reduced
to the inductive assumption in cases~(\ref{pure:conc:2:eq}) and~(\ref{pure:conc:1:eq}) respectively.
\end{proof}

\begin{cor}\label{loc:assoc:cor}
If a sequence of elementary weighted moves is applied to a vertex as in Fig.~\ref{Wvertices:fig}
and the weighted fusions are applied as long as possible to the edges generated by these moves,
the result is independent of the order of fusions.
\end{cor}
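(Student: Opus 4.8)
The plan is to deduce the Corollary directly from Proposition~\ref{assoc:moves:for:W:prop}, of which it is essentially the local content. First I would observe that applying a sequence of elementary weighted moves to a single vertex $V$ (as in Fig.~\ref{Wvertices:fig}) affects only the four edge-germs incident to $V$: each elementary move inserts valence-$2$ vertices along these germs, so after the whole sequence every incident germ carries a concatenation of edges separated by valence-$2$ vertices, terminating at the (possibly altered) central $4$-valent vertex. The point I would stress is that these four concatenations are pairwise disjoint as arms, sharing no valence-$2$ vertex; since a weighted fusion only ever acts on two edges meeting at a valence-$2$ vertex, the fusions decompose into four independent per-arm problems, and the order-independence to be proved is a statement about each arm separately.

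Next I would identify each arm with one of the concatenations already analysed in the proof of Proposition~\ref{assoc:moves:for:W:prop}. Inspecting the elementary weighted moves shows that the concatenation produced on a germ is of exactly the form~(\ref{first:etil:eq}) or~(\ref{second:etil:eq}), here in the special case where the modifications come only from the $V$-end, the opposite end being untouched because no move is applied there. These reduce to the pure forms~(\ref{pure:conc:1:eq})--(\ref{pure:conc:4:eq}), for which the final fusion weight was shown, by induction on half the length of the concatenation, to be independent of the order of the fusions. Moreover, since each such concatenation begins and ends with an even ($r$- or $\ell$-typed) piece and its odd pieces occur in balanced pairs, fusing as long as possible collapses every arm to a single even edge, so the locally modified graph lands back in $\calN_{\textrm{w}}$.

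A clean way to package all of this is to invoke Proposition~\ref{assoc:moves:for:W:prop} verbatim: take the ambient $\Gamma\in\calN_{\textrm{w}}$, apply the given sequence of elementary moves at $V$ and the \emph{empty} weighted move at every other vertex, and then fuse. By the previous paragraph the resulting $\Theta$ again lies in $\calN_{\textrm{w}}$, so the hypotheses of the Proposition are met, and its conclusion---that the system of weights on $\Theta$ is independent of the fusion order---is precisely the assertion of the Corollary. The only delicate point, and the sole mild obstacle, is confirming that the one-sided concatenations arising at a single vertex are genuine instances of the enumerated types~(\ref{first:etil:eq})--(\ref{second:etil:eq}); but this is ensured by the same case analysis of the elementary moves already carried out in the proof of the Proposition, so no new computation is required.
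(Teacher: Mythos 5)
There is a genuine gap, and it sits exactly where you locate ``the only delicate point.'' The one-sided concatenations produced by moves at a single vertex are \emph{not} all specializations of~(\ref{first:etil:eq}) or~(\ref{second:etil:eq}), and they do \emph{not} all reduce to the pure forms~(\ref{pure:conc:1:eq})--(\ref{pure:conc:4:eq}). In those forms an unpaired odd piece can only occur at \emph{both} extremities simultaneously (the pieces $b_0$ and $d_0$ in~(\ref{pure:conc:2:eq}) and~(\ref{pure:conc:4:eq}) come in pairs); this balance is not an accident but is forced by the hypothesis of Proposition~\ref{assoc:moves:for:W:prop} that fusion produces a graph $\Theta\in\calN_{\textrm{w}}$: a flip of the germ orientation at one end of an edge must be compensated by a flip at the other end, otherwise the fused edge is odd. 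At a single vertex there is no opposite end to provide the compensation. A sequence of elementary moves containing, say, $M$ or $\overline{M}$ (Fig.~\ref{MMbarNNbarwitharrows:fig}) reverses the orientation of a germ at the vertex, and the corresponding arm then fuses to an \emph{odd} edge: up to even pieces its concatenation is $\textstyle{\frac{\ell Lr}{b_0}}\cdot\textstyle{\frac{rR\ell}{a_1}}\cdot\textstyle{\frac{\ell Lr}{b_1}}\cdots\textstyle{\frac{rR\ell}{a_k}}\cdot\textstyle{\frac{\ell Lr}{b_k}}$ or its mirror---odd length, flip at one end only---a type that never occurs in the proof of the Proposition. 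Consequently your two supporting claims are false in general: the odd pieces need not occur in balanced pairs, and the locally modified graph need not land back in $\calN_{\textrm{w}}$; hence the hypothesis of Proposition~\ref{assoc:moves:for:W:prop} is not met and the verbatim invocation is illegitimate. This is not a marginal case: the orientation-reversing moves $M,\overline{M}$ are the raison d'\^etre of this section, since they are what generates the circuit move (Fig.~\ref{newcircuitgeneration:fig}).

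What you do have right, and what agrees with the paper, is the reduction to a per-arm statement (fusions on distinct arms never interact) and the fact that the balanced arms are covered by types~(\ref{pure:conc:1:eq}) and~(\ref{pure:conc:3:eq}) with $h=0$. What is missing is a separate inductive computation for the unbalanced types: the paper's proof of the Corollary lists the four one-sided families (two of even length, two of odd length with a single flip) and shows, by the same induction as in the Proposition, that their fused weights are $\sum_{i=1,\ldots,k}b_i-\sum_{i=1,\ldots,k}a_i$ and $\sum_{i=0,\ldots,k}b_i-\sum_{i=1,\ldots,k}a_i$ independently of the fusion order. This extra computation is short, but it cannot be bypassed by citing the \emph{statement} of Proposition~\ref{assoc:moves:for:W:prop}; only its \emph{method} transfers.
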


\begin{proof}
By the argument showing Proposition~\ref{assoc:moves:for:W:prop} we must prove that concatenations of the form
$$\textstyle{\frac{rR\ell}{a_1}}\!\cdot\!
\textstyle{\frac{\ell Lr}{b_1}}\!\cdot\!
\ldots
\textstyle{\frac{rR\ell}{a_k}}\!\cdot\!
\textstyle{\frac{\ell Lr}{b_k}},\qquad
\textstyle{\frac{\ell Lr}{a_1}}\!\cdot\!
\textstyle{\frac{rR\ell}{b_1}}\!\cdot\!
\ldots
\textstyle{\frac{\ell Lr}{a_k}}\!\cdot\!
\textstyle{\frac{rR\ell}{b_k}}$$
$$\textstyle{\frac{\ell Lr}{b_0}}\!\cdot\!\textstyle{\frac{rR\ell}{a_1}}\!\cdot\!
\textstyle{\frac{\ell Lr}{b_1}}\!\cdot\!
\ldots
\textstyle{\frac{rR\ell}{a_k}}\!\cdot\!
\textstyle{\frac{\ell Lr}{b_k}},\qquad
\textstyle{\frac{rR\ell}{b_0}}\!\cdot\!\textstyle{\frac{\ell Lr}{a_1}}\!\cdot\!
\textstyle{\frac{rR\ell}{b_1}}\!\cdot\!
\ldots
\textstyle{\frac{\ell Lr}{a_k}}\!\cdot\!
\textstyle{\frac{rR\ell}{b_k}}$$
give a well-defined result. By induction on the length one can indeed see that
the first two give $\textstyle{\sum\limits_{i=1,\ldots,k}}b_i-\textstyle{\sum\limits_{i=1,\ldots,k}}a_i$ and the last two give
$\textstyle{\sum\limits_{i=0,\ldots,k}}b_i-\textstyle{\sum\limits_{i=1,\ldots,k}}a_i$.
\end{proof}

The two previous results imply that:
\begin{itemize}
\item We can define a \emph{weighted move} at a vertex as in Fig.~\ref{Wvertices:fig}
as a sequence of elementary weighted moves followed by weighted fusion;
\item If we apply to a graph in $\calN_{\textrm{w}}$ some weighted moves and after
weighted fusion we get another graph in
$\calN_{\textrm{w}}$, the weights on this last graph are well-defined.
\end{itemize}

\begin{prop}\label{well-def:moves:prop}
Two weighted moves at a vertex that coincide as unweighted moves also coincide as weighted moves.
\end{prop}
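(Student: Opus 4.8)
The plan is to show that the weighted effect of a sequence of elementary moves factors through the (finite) groupoid of unweighted moves at the vertex, the factorization being forced by the relations of Proposition~\ref{elem:eq:prop}. By Corollary~\ref{loc:assoc:cor} every sequence of elementary weighted moves, followed by weighted fusion, already has a well-defined effect on the numerical weights of the edges incident to the vertex, \emph{independently of the order of fusion}; so a weighted move is a well-defined operation, and the only thing left to prove is that this operation depends on the \emph{word} of elementary moves only through its underlying unweighted move. First I would set up the bookkeeping: the unweighted moves at a single vertex form a connected groupoid $\mathcal G$ whose objects are the $24$ branchings of the dual tetrahedron (those giving a vertex structure as in Fig.~\ref{Wvertices:fig}) and whose morphisms are the unweighted moves; by the Proposition following Fig.~\ref{newmovenew:fig} the elementary moves $I,\duerom,M$ together with their inverses and index-variants generate $\mathcal G$ and act transitively on the objects. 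Assigning to each word in these generators its weighted effect, a shift in $G=\left(\frac12\cdot\matZ\right)/_{\!2\matZ}$ on each incident edge computed by Corollary~\ref{loc:assoc:cor}, defines a map from words to weight-shifts, and the statement is exactly that this map descends to $\mathcal G$.

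Second, I would record that every defining relation of $\mathcal G$ is satisfied at the weighted level. Within $\calN_{\textrm{w}}$, where the fusion rules are associative (Proposition~\ref{W:assoc:prop}), the moves $I,\duerom$ obey $\overline I=I^{-1}$, $\overline{\duerom}=\duerom^{-1}$ and the displayed definitions of $\trerom_\pm$, and these hold already as weighted moves (as recorded in the Remark preceding Proposition~\ref{elem:eq:prop}). The remaining relations, which involve the odd-edge moves $M,N,\overline M,\overline N$, are precisely the content of Proposition~\ref{elem:eq:prop}: $M\!\cdot\!\overline M=\text{id}_-$, $\overline M\!\cdot\! M=\text{id}_+$, $\trerom_-\!\cdot\! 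M=N\!\cdot\!\trerom_+$, $M\!\cdot\!\overline N=N\!\cdot\!\overline M$, and their companions, all established \emph{as weighted moves} under local application of the weighted fusion rules of Fig.~\ref{weightedcalAtilmult:fig}. Thus the weight-shift map respects each relation in a chosen presentation of $\mathcal G$.

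Third, and this is where the real work lies, I would prove that the relations just listed constitute a \emph{complete} set of relations for $\mathcal G$: any two words in $I,\duerom,M$ (and inverses and index-variants) inducing the same unweighted move are connected by a finite chain of applications of these relations. I would do this by fixing a base branching, choosing a normal-form word from it to each of the $24$ branchings (a spanning tree of $\mathcal G$), and checking that every generator applied at every object, re-expressed through the normal forms, reduces to the identity using only the relations of Proposition~\ref{elem:eq:prop} and the $\calN_{\textrm{w}}$-level relations for $I,\duerom$; since $\mathcal G$ and the generating set are finite, this is a terminating, if intricate, verification. The delicate point throughout is that weighted fusion is \emph{not} associative, so one must never manipulate the generator words as if in a group: instead every reduction must be read off the well-defined weighted effect furnished by Corollary~\ref{loc:assoc:cor} and Proposition~\ref{assoc:moves:for:W:prop}. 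Granting completeness, the weight-shift map factors through $\mathcal G$ by the second step, which is exactly the assertion that two weighted moves coinciding as unweighted moves coincide as weighted moves. The underlying geometric reason, which guides the bookkeeping, is that the total twist imparted to $\mu$ along each incident edge is a homotopy invariant of the frame change near the vertex, hence determined by the initial and final branchings alone, that is, by the unweighted move.
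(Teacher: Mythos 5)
Your overall strategy---factor the weighted effect through a presentation of the group(oid) of unweighted moves after checking each defining relation in the weighted sense---is exactly the paper's strategy, and your first two steps are fine. The gap is your third step: the relation set you propose to certify as complete (Proposition~\ref{elem:eq:prop} plus the $\calN_{\textrm{w}}$-level relations for $I,\duerom$) is \emph{not} a complete set of relations for the unweighted moves, so the ``terminating, if intricate, verification'' you defer would in fact fail. Concretely, restrict to moves of index $-1\to-1$ and set $a=\trerom_-$, $b=I\cdot\overline{M}$, $c=I\cdot\overline{N}$. Using the inverse relations to rewrite everything in the vertex group at index $-1$, the identities of Proposition~\ref{elem:eq:prop} reduce to $a^2=\mathrm{id}$, $c=aba$ (from $\trerom_-\!\cdot\!M=N\!\cdot\!\trerom_+$), and $b^{-1}c=c^{-1}b$ (from $M\!\cdot\!\overline{N}=N\!\cdot\!\overline{M}$); the remaining ones are consequences of these. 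Eliminating $c$, your relations present the group
$$\left\langle\, a,b\ \vert\ a^2,\ \left[a,\,b^{-1}ab\right]\,\right\rangle,$$
which is infinite: it surjects onto $\matZ$ by $a\mapsto0$, $b\mapsto1$, so $b$ has infinite order in it. But the group $\Pi_-$ of unweighted $-1\to-1$ moves is finite---the paper identifies it with the alternating group $\permu_4^+$---and there $b=\beta$ satisfies $\beta^3=\mathrm{id}_-$. Hence the word $\beta^3$, which is the identity as an unweighted move, cannot be connected to the empty word by your relations, and your chain-of-relations argument cannot show that the corresponding \emph{weighted} move is trivial.

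This is not a bookkeeping issue but precisely the missing mathematical content: one must enlarge the relation set so as to actually present $\Pi_-\cong\permu_4^+$, and then verify the new relations in the weighted sense by genuinely new computations. The paper does this by taking the standard presentation $\Pi_-=\left\langle\alpha,\beta\ \vert\ \alpha^2,\ \beta^3,\ (\alpha\cdot\beta)^3\right\rangle$ with $\alpha=\trerom_-$ and $\beta=I\cdot\overline{M}$, observing that $\alpha^2$ is easy, and establishing $\beta^3=\mathrm{id}_-$ and $(\alpha\cdot\beta)^3=(\duerom\cdot\overline{M})^3=\mathrm{id}_-$ as weighted identities through the explicit computations of Figg.~\ref{betacubed:fig} and~\ref{alphabetacubed:fig}; these two verifications are the heart of the proof and are absent from your proposal. (Your closing heuristic---that the twist imparted along each incident edge is a homotopy invariant determined by the initial and final branchings---is a restatement of the proposition, not an argument for it; making it rigorous is exactly what the presentation-plus-verification argument accomplishes.)
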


\begin{proof}
We will prove the result for moves turning a vertex of index $-1$ to another vertex of index $-1$,
the general case following by pre-composition with move $I$ and/or post-composition with move
$\overline{I}$. The $12$ moves described form a group $\Pi_-$ which is intrinsically isomorphic to
the alternating group $\permu_4^+$ on $4$ objects. This isomorphism is made explicit
with the choice of generators and the resulting presentation as follows:
$$\alpha=\trerom_-,\qquad\beta=I\cdot \overline{M},\qquad
\Pi_-=\left\langle\alpha,\beta|\ \alpha^2,\ \beta^3,\ (\alpha\cdot\beta)^3\right\rangle$$
(with moves and relations understood without weights). To conclude it is then sufficient
to show that the three relations hold also in a weighted sense. For $\alpha^2$ this was already implicit above
and very easy anyway; the other two weighted relations
are established in Figg.~\ref{betacubed:fig} and~\ref{alphabetacubed:fig}
\begin{figure}
    \begin{center}
    \includegraphics[scale=.6]{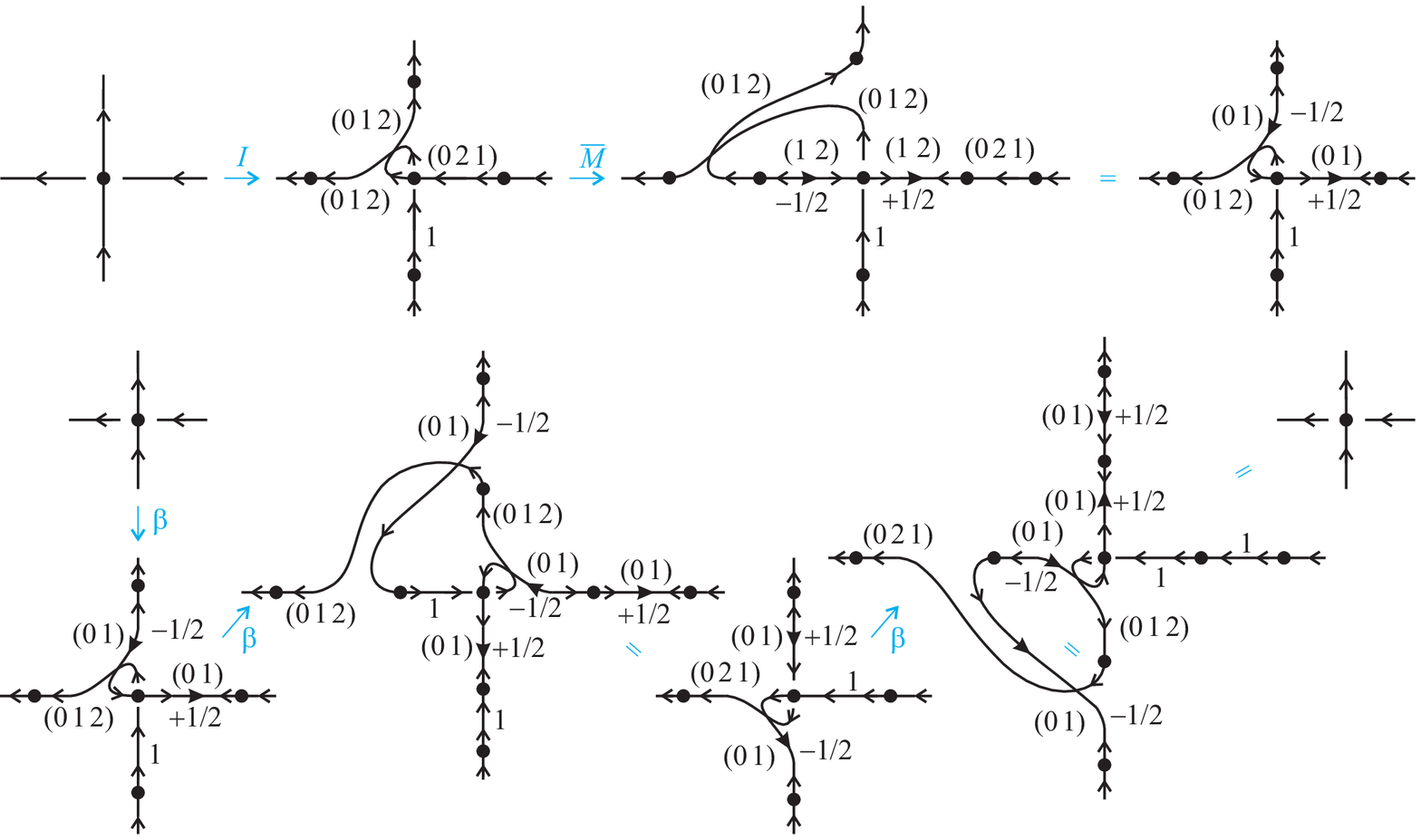}
    \end{center}
\vspace{-.5cm}\mycap{Computation of $\beta=I\cdot\overline{M}$ and proof that $\beta^3=\textrm{id}_-$ in a weighted sense.\label{betacubed:fig}}
\end{figure}
\begin{figure}
    \begin{center}
    \includegraphics[scale=.6]{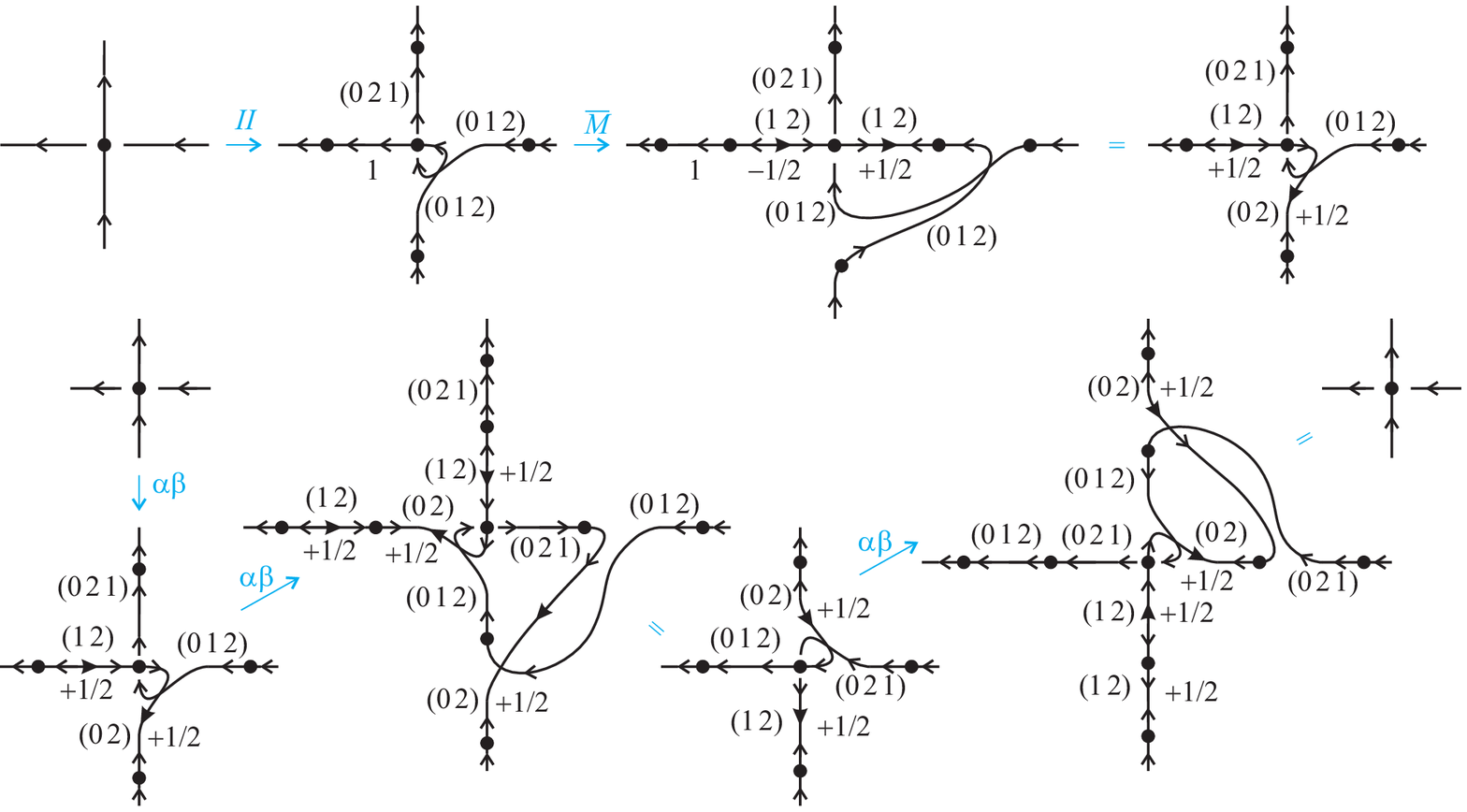}
    \end{center}
\vspace{-.5cm}\mycap{Computation of $\alpha\cdot\beta=\trerom_-\cdot I\cdot \overline{M}=
\duerom\cdot\overline{I}\cdot I\cdot \overline{M}=\duerom\cdot \overline{M}$ and
proof that $(\alpha\cdot\beta)^3=\textrm{id}_-$ in a weighted sense.\label{alphabetacubed:fig}}
\end{figure}
\end{proof}

We are eventually ready to establish our main result of this section:

\begin{thm}\label{loc:moves:thm}
Two graphs in $\calN_{\text{\emph{w}}}$ represent the same spine of some manifold $M$ and
the same spin structure $s$ on $M$ if and only if they can be obtained from each other
by a combination of the moves $I,\duerom,M,\overline{I},\overline{\duerom},\overline{M}$ and
weighted fusion.
\end{thm}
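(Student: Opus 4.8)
The plan is to bootstrap from the presentation already obtained in Proposition~\ref{circuit:move:prop}, reducing the single global move appearing there---the reversal of a simple oriented overarc circuit $\gamma$---to the local generators $M,\overline{M}$ together with the vertex moves $I,\duerom$, with weighted fusion providing the passage between $\calAtil_{\textrm{w}}$ and $\calN_{\textrm{w}}$. The \emph{if} direction is the routine one. The moves $I,\duerom$ and their inverses preserve the spine, the pre-branching and the spin structure by Proposition~\ref{vertex:moves:real:prop}. The moves $M,\overline{M}$ preserve the underlying spine by direct inspection of Figure~\ref{MMbarNNbarwitharrows:fig}, and they preserve the spin structure because the numerical weight they carry compensates the local variation $\Delta\alpha$ of the obstruction, exactly as in the computation recorded in Remark~\ref{vertex:obstruction:rem}. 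Finally weighted fusion changes neither the associated spine (Proposition~\ref{Assoc:prop}) nor the homotopy class of the frame on $S(P)$ (by the additivity established in Proposition~\ref{numuadditivity:prop} and Remark~\ref{additivity:rem}), and hence preserves $s$ as well.

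For the \emph{only if} direction I would proceed as follows. By Proposition~\ref{circuit:move:prop} two graphs in $\calN_{\textrm{w}}$ represent the same spine and the same spin structure if and only if they are joined by a sequence of vertex moves---already in our generating set---and circuit moves; it is therefore enough to realize a single circuit move through our generators. Reversing $\gamma$ changes the pre-branching $\omega$ and amounts, at the level of the weak branching, to switching the edge $v_2v_3$ in each tetrahedron dual to a vertex visited by $\gamma$; such a switch reverses edges at that vertex and so flips its index. At a single vertex this switch is precisely one of the elementary index-changing moves $M,\overline{M},N,\overline{N}$, the choice being dictated by the index of the vertex and the local shape of $\gamma$. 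Since only $M,\overline{M}$ appear among our generators, I would eliminate $N$ and $\overline{N}$ by means of the relations of Proposition~\ref{elem:eq:prop}, which express them in terms of $M,\overline{M}$ and the index-preserving moves $\trerom_\pm$; as the latter are themselves words in $I,\duerom$, every local switch along $\gamma$ lies in the subgroup generated by $I,\duerom,M,\overline{M}$, followed by fusion.

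It then remains to apply these local switches simultaneously at all the vertices of $\gamma$ and to verify that, after weighted fusion of the odd half-edges thereby created along $\gamma$, one recovers a graph in $\calN_{\textrm{w}}$ carrying exactly the weights prescribed by the circuit move: the value $1$ on the edges of $\gamma$ whose endpoints have distinct indices, and $0$ on all other edges. This is the heart of the matter and the point I expect to be hardest. The obstacle is that the weighted fusion rules of Figure~\ref{weightedcalAtilmult:fig} are \emph{not} associative, so the two half-edges produced at the endpoints of each edge of $\gamma$ cannot be fused in an arbitrary order without a priori ambiguity. This is exactly where the coherence results proved above become indispensable: Proposition~\ref{assoc:moves:for:W:prop} and Corollary~\ref{loc:assoc:cor} guarantee that the weight produced on each edge is independent of the order in which the fusions are carried out, while Proposition~\ref{well-def:moves:prop} ensures that this weight depends only on the underlying unweighted switch, not on the particular word in $M,\overline{M},I,\duerom$ chosen to realize it. Once well-definedness is granted, the actual value of each weight reduces to a purely local tally at the two endpoints of the edge, each odd half-edge contributing $\pm\frac12$; summing the two contributions reproduces the table in the proof of Proposition~\ref{circuit:move:prop} and yields $1$ precisely when the endpoint indices differ. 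This matches the circuit-move weights, so each circuit move is generated by $I,\duerom,M,\overline{M}$ and weighted fusion, and the theorem follows.
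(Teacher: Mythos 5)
Your argument for the direction ``same $(M,s)$ $\Rightarrow$ related by the moves'' coincides in substance with the paper's: reduce to the circuit move via Proposition~\ref{circuit:move:prop}, then generate the circuit move by applying $M,\overline{M}$ (with $N,\overline{N}$ eliminated through the relations of Proposition~\ref{elem:eq:prop}) at the vertices of $\gamma$ and fusing; this is exactly what Fig.~\ref{newcircuitgeneration:fig} does. The genuine gap is in the converse, which you dismiss as ``routine''. A single application of $M$ or $\overline{M}$ reverses the orientations of the germs of edges at one vertex and hence produces a graph in $\calAtil_{\textrm{w}}$ whose incident edges are odd; such a graph carries no pre-branching, no frame $(\nu,\mu)$, no $\zetadue$-valued cochain $\beta$, and therefore no spin structure at all. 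So the assertion that ``$M,\overline{M}$ preserve the spin structure because their numerical weight compensates $\Delta\alpha$ as in Remark~\ref{vertex:obstruction:rem}'' is not even a well-formed local statement: Remark~\ref{vertex:obstruction:rem} concerns move $I$, which stays inside $\calNtil_{\textrm{w}}$, where fusion is additive and associative (Proposition~\ref{numuadditivity:prop}); for $M,\overline{M}$ the weights are half-integers with internal orientations, the weighted fusion rules are \emph{not} associative, and preservation of $s$ can only be discussed after moves have been applied at several vertices simultaneously (in general a single $M$ cannot be completed by fusion alone to an element of $\calN_{\textrm{w}}$) and all fusions carried out. Your appeal to Propositions~\ref{Assoc:prop} and~\ref{numuadditivity:prop} misapplies results about unweighted fusion, respectively about $\calNtil$, to weighted graphs with odd edges.

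This converse is precisely the technical heart of the paper's proof, and it is established indirectly rather than move-by-move. Given $\Gamma_2$ obtained from $\Gamma_1$ by the moves, one considers the set $\Delta$ of edges whose orientation differs, uses Proposition~\ref{Fede:prop} (the good-labeling argument) to turn $\Delta$, after vertex moves, into a union of overcircuits and undercircuits, invokes Proposition~\ref{many:circuits:prop} to know that the corresponding multiple circuit move preserves the spin structure, and realizes that move by $M,\overline{M},N,\overline{N},D_{\pm}$; together with further vertex moves this produces $\widetilde{\Gamma}_1$ carrying the same spin structure as $\Gamma_2$ and coinciding with $\Gamma_1$ except possibly for the weights. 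Since the whole chain $\Gamma_1\to\Gamma_2\to\Gamma_3\to\Gamma_4\to\widetilde{\Gamma}_1$ is the identity as an unweighted move at every vertex, Proposition~\ref{well-def:moves:prop} forces $\widetilde{\Gamma}_1=\Gamma_1$ also as a weighted graph (up to coboundaries), whence $s(\Gamma_2)=s(\Gamma_1)$. None of Propositions~\ref{many:circuits:prop}, \ref{Fede:prop}, \ref{well-def:moves:prop} --- nor any substitute for this global argument --- appears in your treatment of this direction, so as written your proof of it does not stand; the coherence results you do cite are used by the paper exactly here, not (as in your plan) merely to make the circuit-move weights well defined.
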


\begin{proof}
Suppose that $\Gamma_1,\Gamma_2\in\calN_{\textrm{w}}$
represent the same $(M,s)$. Then they are related bymoves $I,\duerom,\overline{I},\overline{\duerom}$ and circuit moves. To get
the desired conclusion it is then enough to show that the moves $M,\overline{M}$ generate
the circuit move, which is done in Fig.~\ref{newcircuitgeneration:fig}
\begin{figure}
    \begin{center}
    \includegraphics[scale=.6]{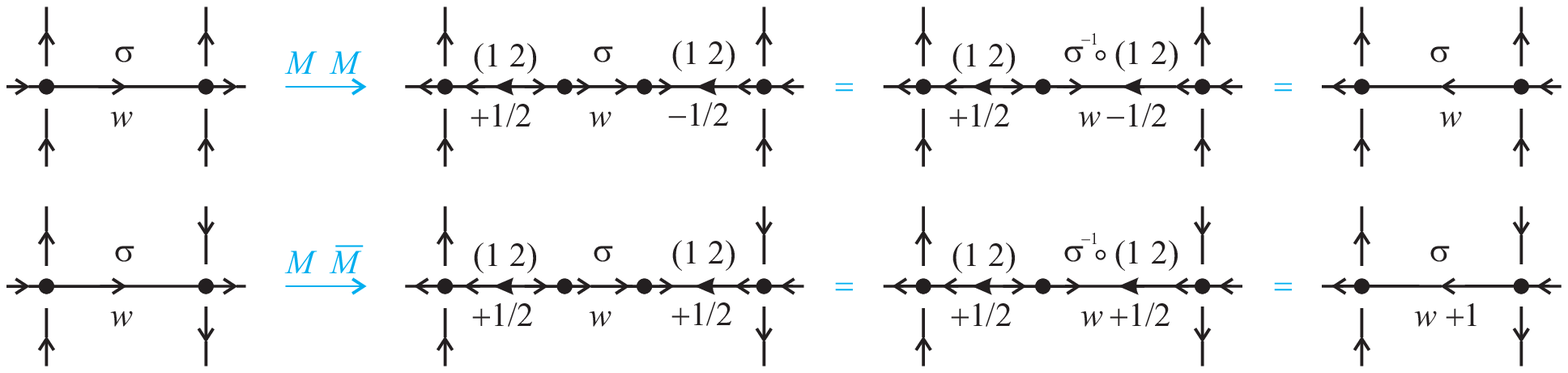}
    \end{center}
\vspace{-.5cm}\mycap{Generation of the circuit move via the moves $M,\overline{M}$ and weighted fusion.\label{newcircuitgeneration:fig}}
\end{figure}
for an edge of a circuit with first end of index
$-1$ (the cases with first end $+1$ being similar).

For the opposite implication we need two preliminary results. The proof of the first one is an easy
variation of the argument showing Proposition~\ref{circuit:move:prop}:

\begin{prop}\label{many:circuits:prop}
Suppose that in $\Gamma\in\calN_{\text{\emph{w}}}$ there are some (possibly intersecting)
oriented circuits $\gamma_1,\ldots,\gamma_n$, and that each $\gamma_j$ is either an undercircuit
(an overpass at all its vertices) or an
overcircuit (an underpass at all its vertices). Then the spin structure defined by $\Gamma$ is also
defined by the graph obtained from $\Gamma$ by reversing the orientation of each edge $e$ of
$\gamma_1\cup\ldots\cup\gamma_n$ and adding $1$ to the weight of $e$ if the ends if $e$
have different indices.
\end{prop}

\begin{prop}\label{Fede:prop}
If $\Gamma\in\calN_{\text{\emph{w}}}$ then using the moves
$I,\duerom,\overline{I},\overline{\duerom}$ at the vertices of $\Gamma$, followed
by fusion, one can get $\Theta\in\calN_{\text{\emph{w}}}$ such that each edge of $\Theta$ is
either an overpass at both its ends or an underpass at both its ends.
\end{prop}

\begin{proof}
To begin we note that given a vertex $V$ of $\Gamma$ and the choice of two germs of edges of $\Gamma$ at $V$
having consistent orientation through $V$, the moves $I,\duerom,\overline{I},\overline{\duerom}$
allow to put the two chosen germs of edges in the overpass position at $v$.
It is then enough to show that we can attach labels $o$ (over) and $u$ (under) to
the germs of edges of $\Gamma$ at vertices, so that:
\begin{itemize}
\item For each edge the labels at its ends are the same;
\item At each vertex the germs having the same label have consistent orientation.
\end{itemize}
One such labeling will be termed \emph{good}, and the coming argument proving its existence is due to Federico Petronio.
We choose a vertex $V$ of $\Gamma$ and attach any label to any of the germs of edge at $V$.
Then we propagate the labeling along a path in $\Gamma$ by applying alternatively the following rules
until $V$ is reached again:
\begin{itemize}
\item If an end of an edge has a label, give the other end the same label;
\item If at a vertex an incoming (respectively, outgoing) germ has a label, give the other
incoming (respectively, outgoing) germ the other label.
\end{itemize}
Note that the propagation path need not be simple, but at each vertex visited twice the labeling
is good ---see Fig.~\ref{freddyproof:fig}-left.
\begin{figure}
    \begin{center}
    \includegraphics[scale=.6]{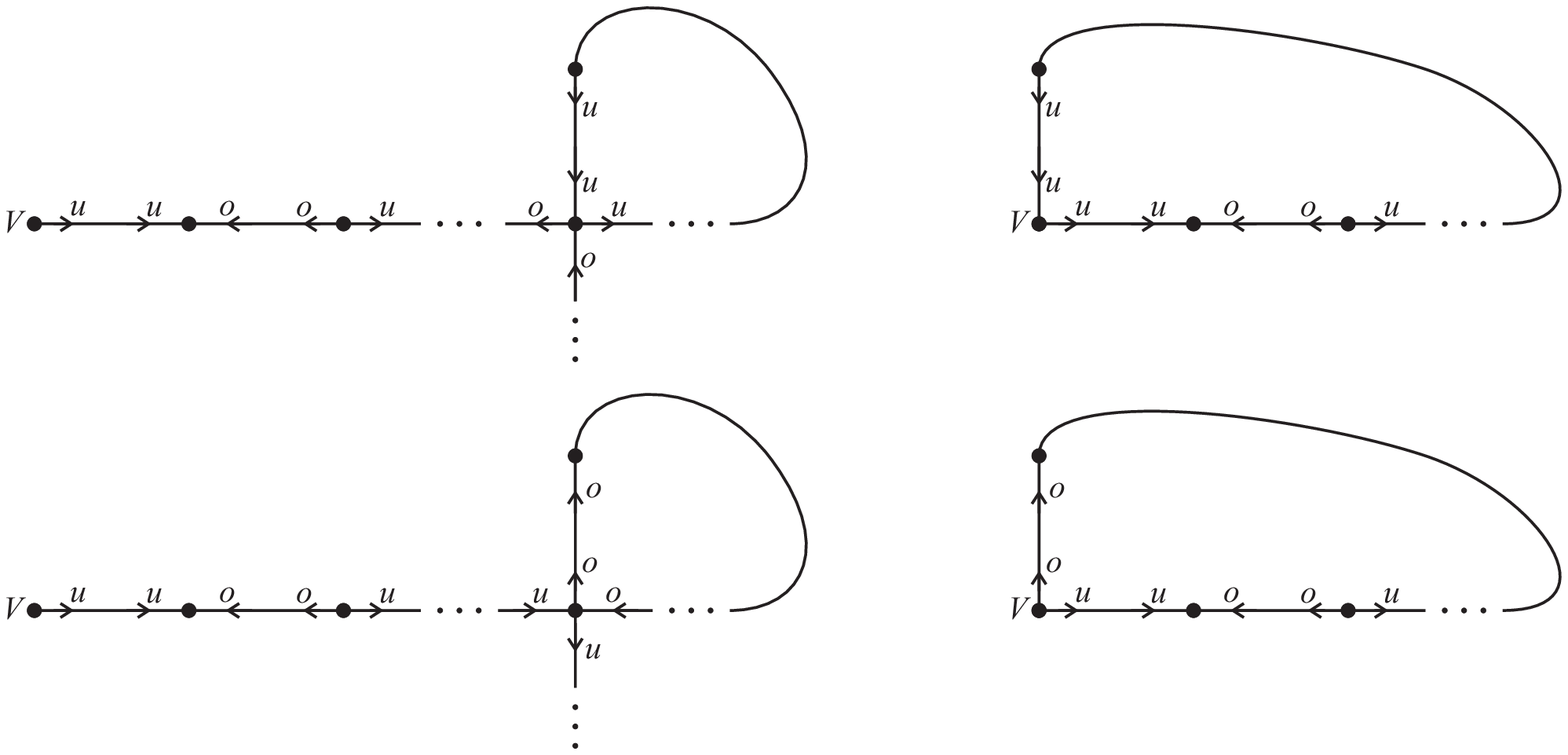}
    \end{center}
\vspace{-.5cm}\mycap{Extension of the labeling in case of initial label $u$ on
an outgoing germ at $V$.\label{freddyproof:fig}}
\end{figure}
When $V$ is reached again we have one of the situations in Fig.~\ref{freddyproof:fig}-right; in the top one we
proceed by applying the second rule, and eventually get back to $V$ again with a good labeling; in
the bottom one we proceed with an arbitrary choice of the label, but once more we get back to $V$ with a good labeling.
We can now similarly start from some other vertex, until all the germs of edges at vertices are labeled.\end{proof}

Back
to the proof of Theorem~\ref{loc:moves:thm}, suppose that $\Gamma_2\in\calN_{\textrm{w}}$
is obtained from $\Gamma_1\in\calN_{\textrm{w}}$ by a combination of weighted moves $I,\duerom,M,\overline{I},\overline{\duerom},\overline{M}$ and
weighted fusion. Let $\Delta$ be the union of the edges of $\Gamma_2$ having a different orientation in $\Gamma_1$.
By Proposition~\ref{Fede:prop} we can find weighted moves generated by $I,\duerom$ turning $\Gamma_2$ into
$\Gamma_3\in\calN_{\textrm{w}}$
in which $\Delta$ appears as a union of overcircuits and undercircuits.
Note that $\Gamma_3$ carries the same spin structure as $\Gamma_2$ by Proposition~\ref{vertex:moves:real:prop}.
With pictures similar to Fig.~\ref{newcircuitgeneration:fig} one can now see
that the multiple circuit moves of Proposition~\ref{many:circuits:prop}
are generated by the moves $M,\overline{M},N,\overline{N},D_-=M\cdot\overline{N}=N\cdot\overline{M},
D_+=\overline{M}\cdot N=\overline{N}\cdot M$
(the move $D_-$ is shown in Fig.~\ref{MNcommute:fig}, and $D_+$ is obtained similarly).

This shows that we can find a combination of the moves $I,\duerom,M,\overline{I},\overline{\duerom},\overline{M}$
that, after weighted fusion, turn $\Gamma_2$ into some $\Gamma_4$ carrying the same spin structure as $\Gamma_2$
and the same pre-branching as $\Gamma_1$. Proposition~\ref{vertex:moves:real:prop} then implies that
via moves $I,\duerom,\overline{I},\overline{\duerom}$ we can turn $\Gamma_4$ into some
$\widetilde{\Gamma}_1$ carrying the same spin structure as $\Gamma_2$ and different from $\Gamma_1$ possibly only
for the weights. We
then have a sequence
of weighted moves $I,\duerom,M,\overline{I},\overline{\duerom},\overline{M}$ that under weighted fusion give
$$\Gamma_1\longrightarrow\Gamma_2\longrightarrow\Gamma_3\longrightarrow\Gamma_4\longrightarrow\widetilde{\Gamma}_1$$
and that ignoring weights give the identity of $\Gamma_1$
(namely, they give the identity at every vertex of $\Gamma_1$). Proposition~\ref{well-def:moves:prop} then
implies that $\widetilde{\Gamma}_1$ coincides with $\Gamma_1$ also as a weighted graph (up to coboundaries).
This shows that $\Gamma_2$ carries the same spin structure as $\Gamma_1$.
\end{proof}

\subsection{Obstruction computation on graphs with split edges}

Even if this is not strictly necessary for our main results, we provide here
two methods for the computation of the obstruction $\alpha(P,\omega,b)$ carried
by a graph $\widetilde\Gamma\in\calAtil$ that after fusion becomes a graph in $\Theta\in\calN$ defining a triple
$(P,\omega,b)$. The first method is general, direct and easy;
the second one only applies to a $\widetilde\Gamma$ resulting from the application to some
$\Gamma\in\calN$ of the moves of Proposition~\ref{assoc:moves:for:W:prop}
(ignoring the numerical weights but using internal orientations),
and it is more complicated, but it also shows that
some non-trivial algebra underlies the computation.

\medskip

\noindent\textsc{First method}.
Take $\widetilde\Gamma\in\calAtil$ that after fusion gives $\Gamma\in\calN$ representing
$(P,\omega,b)$. We claim that $\alpha(P,\omega,b)$ can be computed from $\widetilde\Gamma$
by considering on the boundary of each region of $P$ some numerical
contributions in $G=\left(\frac12\cdot\matZ\right)/_{\!2\matZ}$
and some arrows, as in Proposition~\ref{real:obstruction:computation:prop}.
Contributions from vertices and from even edges are the same as in
Proposition~\ref{real:obstruction:computation:prop}, while those from an
odd edge $e$ are described as follows
(with the regions labeled 0,1,2 as in Fig.~\ref{regionlabels:fig}
and contributions $0$ not mentioned):
\begin{center}
\begin{tabular}{c||c|c|c}
$e$    & $\tau=(0\,1)$ & $\tau=(0\,2)$ & $\tau=(1\,2)$ \\ \hline\hline
    \includegraphics[scale=.5]{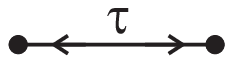}
&
    regions  $0$ and $1$ get $+{\textstyle{\frac12}}$ & all regions get $1$ & regions  $1$ and $2$ get $-{\textstyle{\frac12}}$ \\ \hline
    \includegraphics[scale=.5]{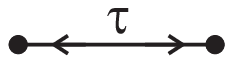}
&
    regions  $0$ and $1$ get $-{\textstyle{\frac12}}$ & all regions get $1$ & regions  $1$ and $2$ get $+{\textstyle{\frac12}}$
\end{tabular}
\end{center}
The proof that this recipe works follows from the fact that the contributions
combine consistently under fusion, which is shown on examples in Fig.~\ref{alphaAassoc:fig}.
\begin{figure}
    \begin{center}
    \includegraphics[scale=.6]{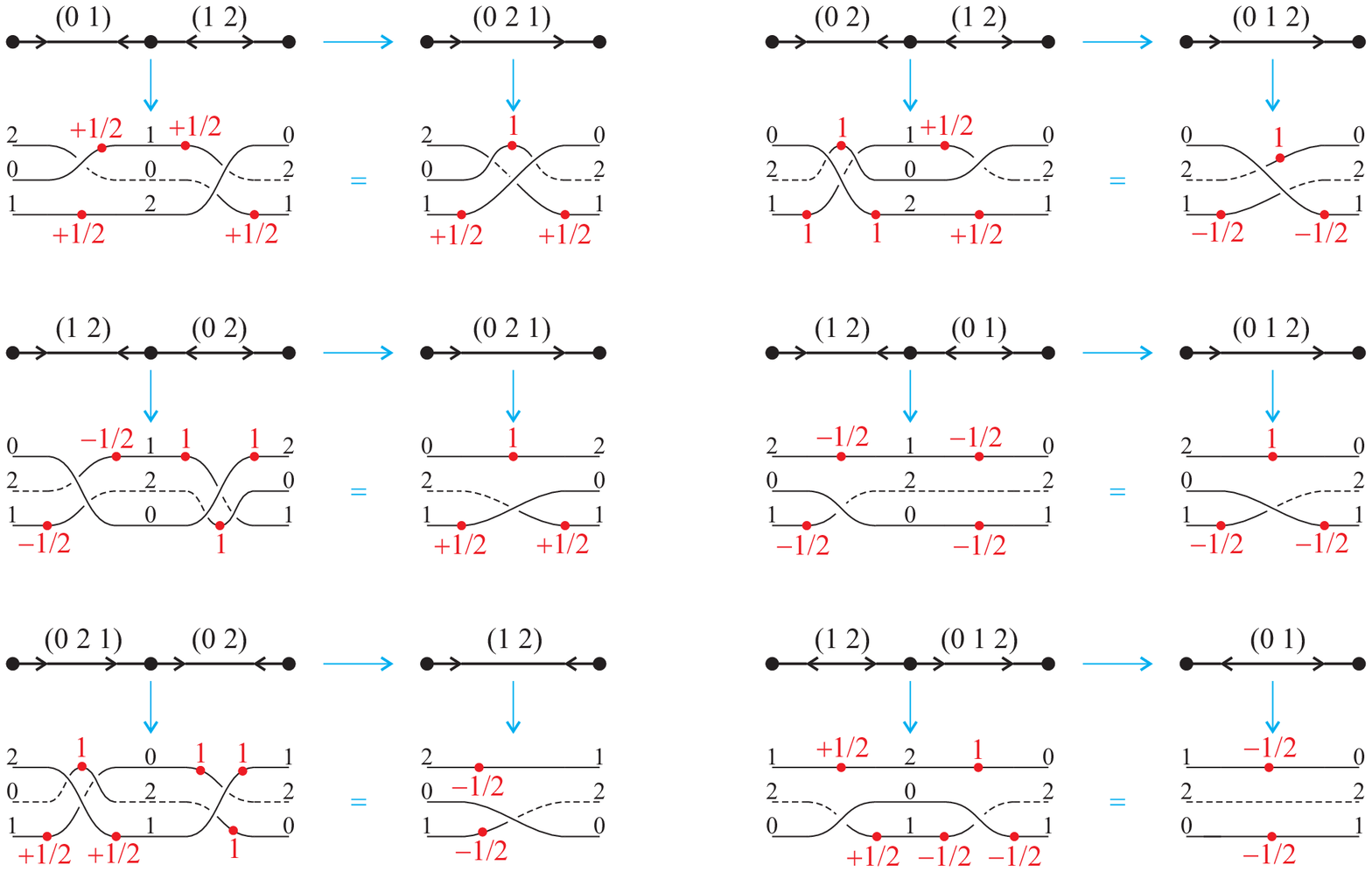}
    \end{center}
\vspace{-.5cm}\mycap{Associativity of the computation of $\alpha$ on a graph in $\calAtil$.\label{alphaAassoc:fig}}
\end{figure}

\medskip

\noindent\textsc{Second method}.
We begin with an apparently unrelated algebraic result. For any set $G$ we consider
the right action of $\permu_3$ on $G$ given by
$$(g_0,g_1,g_2)\cdot \eta=\left(g_{\eta(0)},g_{\eta(1)},g_{\eta(2)}\right).$$
We check that indeed this is a right action on an example:
\begin{eqnarray*}
\big((g_0,g_1,g_2)\cdot (0\,1)\big)\cdot(1\,2) & = &
(g_1,g_0,g_2)\cdot(1\,2)=(g_1,g_2,g_0)\\
(g_0,g_1,g_2)\cdot \big((0\,1)\compo(1\,2)\big) & = &
(g_0,g_1,g_2)\cdot (0\,1\,2)=(g_1,g_2,g_0).
\end{eqnarray*}
If $G$ is an Abelian group of course we have
$$\big((g_0,g_1,g_2)+(h_0,h_1,h_2)\big)\cdot \eta=
(g_0,g_1,g_2)\cdot \eta+(h_0,h_1,h_2)\cdot \eta$$
so we can define the semidirect product
$\permu_3\amalg G^3$ as $\permu_3\times G^3$ with operation
$$(\eta,(g_0,g_1,g_2))\cdot(\theta,(h_0,h_1,h_2))=
(\eta\compo\theta,(g_0,g_1,g_2)\cdot \theta+(h_0,h_1,h_2).$$
We now specialize our choice to $=\left({\textstyle{\frac12}}\cdot\matZ\right)/_{2\matZ}$ and we
establish the following:

\begin{prop}\label{section:prop}
Define $s:\permu_3\to G^3$ by
$$\begin{array}{rclrclrcl}
s(\emptyset) &\!\! =\!\! & (0,0,0)\ &\
    s((0\,1\,2)) &\!\! =\!\! & \left(-{\textstyle{\frac12}},-{\textstyle{\frac12}},1\right)\ &\
        s((0\,1)) &\!\! =\!\! & \left(-{\textstyle{\frac12}},+{\textstyle{\frac12}},0\right)\\
s((0\,2)) &\!\! =\!\! & (1,0,1)\ &\
    s((0\,2\,1)) &\!\! =\!\! & \left(1,+{\textstyle{\frac12}},+{\textstyle{\frac12}}\right)\ &\
        s((1\,2)) &\!\! =\!\! & \left(0,-{\textstyle{\frac12}},+{\textstyle{\frac12}}\right).\end{array}$$
Then $\Psi:\permu_3\to \permu_3\amalg G^3$ given by $\Psi(\eta)=(\eta,s(\eta))$ is a group homomorphism.
\end{prop}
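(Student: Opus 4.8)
The plan is to recognize that $\Psi$ is a homomorphism precisely when $s$ is a crossed homomorphism (a $1$-cocycle) for the given right action. Indeed, unravelling the product in $\permu_3\amalg G^3$ one has
$$\Psi(\eta)\cdot\Psi(\theta)=(\eta,s(\eta))\cdot(\theta,s(\theta))=\left(\eta\compo\theta,\ s(\eta)\cdot\theta+s(\theta)\right),$$
so, since the first coordinates agree tautologically, $\Psi(\eta\compo\theta)=\Psi(\eta)\cdot\Psi(\theta)$ holds for all $\eta,\theta$ if and only if
$$s(\eta\compo\theta)=s(\eta)\cdot\theta+s(\theta)\qquad\text{for all }\eta,\theta\in\permu_3.$$
Thus the entire content of the proposition is this single cocycle identity.

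First I would observe that $s$ is in fact a $1$-coboundary, which is the key simplification. Set $v=\left(1,{\textstyle\frac12},0\right)\in G^3$ and consider $\delta v\colon\permu_3\to G^3$ defined by $\delta v(\eta)=v\cdot\eta-v$. A direct check on the five non-identity elements shows $s=\delta v$: for instance $v\cdot(0\,1)=\left({\textstyle\frac12},1,0\right)$, whence $\delta v((0\,1))=\left(-{\textstyle\frac12},{\textstyle\frac12},0\right)=s((0\,1))$, while $v\cdot(0\,2\,1)=\left(0,1,{\textstyle\frac12}\right)$ gives $\delta v((0\,2\,1))=\left(-1,{\textstyle\frac12},{\textstyle\frac12}\right)=\left(1,{\textstyle\frac12},{\textstyle\frac12}\right)=s((0\,2\,1))$, using $-1=1$ in $G$. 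The remaining cases $(0\,2)$, $(1\,2)$ and $(0\,1\,2)$ are identical routine evaluations.

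To conclude, I would invoke the fact that every coboundary is automatically a cocycle. Using that $G^3$ is abelian and that the permutation action is additive, together with the right-action identity $(w\cdot\eta)\cdot\theta=w\cdot(\eta\compo\theta)$ (the very property verified on an example just before the statement), one computes
$$\delta v(\eta)\cdot\theta+\delta v(\theta)=(v\cdot\eta-v)\cdot\theta+(v\cdot\theta-v)=(v\cdot\eta)\cdot\theta-v=v\cdot(\eta\compo\theta)-v=\delta v(\eta\compo\theta).$$
Hence $s=\delta v$ satisfies the required identity and $\Psi$ is a homomorphism. The only genuine obstacle is spotting the cocycle reformulation and the explicit element $v$; absent that insight, one can instead verify $s(\eta\compo\theta)=s(\eta)\cdot\theta+s(\theta)$ head on, and since $\permu_3$ is generated by $(0\,1)$ and $(1\,2)$ a short induction on word length reduces the verification to the finitely many cases in which $\eta$ is one of these two transpositions, all of which are straightforward.
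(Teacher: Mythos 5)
Your proof is correct, but it takes a genuinely different route from the paper's. The paper argues via the presentation $\permu_3=\left\langle x,y\mid x^2,\ y^2,\ (x\cdot y)^3\right\rangle$ with $x=(0\,1)$, $y=(1\,2)$: it computes explicitly in $\permu_3\amalg G^3$ that $\Psi(x)^2$, $\Psi(y)^2$ and $\left(\Psi(x)\cdot\Psi(y)\right)^3$ are trivial, so that $x\mapsto\Psi(x)$, $y\mapsto\Psi(y)$ extends to a homomorphism on $\permu_3$, and then checks that this extension agrees with $\Psi$ on the remaining elements, via $\Psi((0\,1\,2))=\Psi(x)\cdot\Psi(y)$, $\Psi((0\,2\,1))=\Psi(y)\cdot\Psi(x)$ and $\Psi((0\,2))=\Psi(x)\cdot\Psi(y)\cdot\Psi(x)$. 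You instead observe that $\Psi$ being a homomorphism is equivalent to the crossed-homomorphism identity $s(\eta\compo\theta)=s(\eta)\cdot\theta+s(\theta)$, and then exhibit $s$ as the coboundary $\delta v(\eta)=v\cdot\eta-v$ of the single element $v=\left(1,{\textstyle{\frac12}},0\right)$, after which the cocycle identity follows formally from additivity of the action and the right-action property $(w\cdot\eta)\cdot\theta=w\cdot(\eta\compo\theta)$. I verified all six evaluations of $\delta v$ (including the reductions $-1=1$ in $G$) and they do reproduce the table defining $s$, so the argument is complete; your closing remark about induction on word length is unnecessary and can be dropped. What your approach buys is brevity and an explanation of where the otherwise mysterious table of values comes from ($s$ is cohomologically trivial, i.e.\ principal); what the paper's approach buys is a self-contained computation entirely inside $\permu_3\amalg G^3$ whose intermediate identities are the ones reused in the subsequent obstruction computations. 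One point to make explicit if you polish this: the paper only verifies the right-action property on an example, but the general proof is the one-line index computation $\left((g\cdot\eta)\cdot\theta\right)_i=g_{\eta(\theta(i))}=\left(g\cdot(\eta\compo\theta)\right)_i$, so relying on it is not a gap.
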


\begin{proof}
If $x=(0\,1)$ and $y=(1\,2)$ we have the presentation of $\permu_3$ given by
$$\left\langle x,y|\ x^2,\ y^2,\ (x\cdot y)^3\right\rangle$$
with $(0\,1\,2)=x\cdot y$, $(0\,2\,1)=y\cdot x$, $(0\,2)=x\cdot y\cdot x$. The proposition will then be a consequence of the relations
$$\Psi(x)^2=\Psi(y)^2=\left(\Psi(x)\cdot\Psi(y)\right)^3=(\emptyset,(0,0,0)),\quad
\Psi((0\,2))=\Psi(x)\cdot\Psi(y)\cdot\Psi(x)$$
$$\Psi((0\,1\,2))=\Psi(x)\cdot\Psi(y),\quad
\Psi((0\,2\,1))=\Psi(y)\cdot\Psi(x).$$
We start with
\begin{eqnarray*}
\Psi(x)^2 & = &
\left((0\,1),\left(-{\textstyle{\frac12}},+{\textstyle{\frac12}},0\right)\right)\cdot
\left((0\,1),\left(-{\textstyle{\frac12}},+{\textstyle{\frac12}},0\right)\right)\\
& = & \left((0\,1)\compo(0\,1),
\left(+{\textstyle{\frac12}},-{\textstyle{\frac12}},0\right)+\left(-{\textstyle{\frac12}},+{\textstyle{\frac12}},0\right)\right)=(\emptyset,(0,0,0)).
\end{eqnarray*}
The computation of $\Psi(y)^2$ is similar. Before checking
that $\Psi(x)\cdot\Psi(y)$ has vanishing cube we compute it, checking it is
$\Psi((0\,1\,2))$:
\begin{eqnarray*}
\Psi(x)\cdot\Psi(y) & = &
\left((0\,1),\left(-{\textstyle{\frac12}},+{\textstyle{\frac12}},0\right)\right)\cdot
\left((1\,2)\left(0,-{\textstyle{\frac12}},+{\textstyle{\frac12}}\right)\right)\\
& = & \left((0\,1)\compo(1\,2),
\left(-{\textstyle{\frac12}},0,+{\textstyle{\frac12}}\right)+
\left(0,-{\textstyle{\frac12}},+{\textstyle{\frac12}}\right)\right)\\
& = & \left((0\,1\,2),\left(-{\textstyle{\frac12}},-{\textstyle{\frac12}},1\right)\right).
\end{eqnarray*}
And now we conclude:
\begin{eqnarray*}
\left(\Psi((0\,1\,2))\right)^3 & = &
\left((0\,1\,2),\left(-{\textstyle{\frac12}},-{\textstyle{\frac12}},1\right)\right)^3\\
& = &
\left((0\,1\,2)\compo(0\,1\,2),\left(-{\textstyle{\frac12}},1,-{\textstyle{\frac12}}\right)+
\left(-{\textstyle{\frac12}},-{\textstyle{\frac12}},1\right)\right)
\cdot\Psi((0\,1\,2))\\
& = & \left((0\,2\,1),\left(1,+{\textstyle{\frac12}},+{\textstyle{\frac12}}\right)\right)\cdot
\left((0\,1\,2),\left(-{\textstyle{\frac12}},-{\textstyle{\frac12}},1\right)\right)\\
& = &
\left((0\,2\,1)\compo(0\,1\,2),
\left(+{\textstyle{\frac12}},+{\textstyle{\frac12}},1\right)+\left(-{\textstyle{\frac12}},-{\textstyle{\frac12}},1\right)\right)=(\emptyset,(0,0,0));
\end{eqnarray*}
\begin{eqnarray*}
\Psi(y)\cdot\Psi(x) & = &
\left((1\,2),\left(0,-{\textstyle{\frac12}},+{\textstyle{\frac12}}\right)\right)\cdot
\left((0\,1),\left(-{\textstyle{\frac12}},+{\textstyle{\frac12}},0\right)\right)\\
& = & \left((1\,2)\compo(0\,1),
\left(-{\textstyle{\frac12}},0,+{\textstyle{\frac12}}\right)+\left(-{\textstyle{\frac12}},+{\textstyle{\frac12}},0\right)\right)\\
& = & \left((0\,2\,1),\left(1,+{\textstyle{\frac12}},+{\textstyle{\frac12}}\right)\right);\\
\Psi(x)\cdot \Psi(y)\cdot\Psi(x) & = &
\left((0\,1),\left(-{\textstyle{\frac12}},+{\textstyle{\frac12}},0\right)\right)\cdot
\left((0\,2\,1),\left(1,+{\textstyle{\frac12}},+{\textstyle{\frac12}}\right)\right)\\
& = & \left((0\,1)\compo(0\,2\,1),
\left(0,-{\textstyle{\frac12}},+{\textstyle{\frac12}}\right)+\left(1,+{\textstyle{\frac12}},+{\textstyle{\frac12}}\right)\right)\\
& = & \left((0\,2),\left(1,0,1\right)\right).
\end{eqnarray*}
\end{proof}

\begin{rem}
\emph{The previous result remains true, with the same proof, if the values on $s$ on
the transpositions are redefined as}
$$s((0\,1))=\left(+{\textstyle{\frac12}},-{\textstyle{\frac12}},1\right),\quad
s((1\,2))=\left(1,+{\textstyle{\frac12}},-{\textstyle{\frac12}}\right),\quad
s((0\,2))=(0,1,0).$$
\end{rem}

Let us then turn to the computation of the obstruction $\alpha(P,\omega,b)$.
We start from $\Gamma\in\calN$, we apply to it some of the
moves of Proposition~\ref{assoc:moves:for:W:prop}
(but neglecting the numerical weight) and we call $\widetilde\Gamma$ the result.
Next, we assume that applying fusion to $\widetilde\Gamma$ we get $\Theta\in\calN$ defining
$(P,\omega,b)$. Note that every edge of $\widetilde\Gamma$ carries
an internal orientation (that for an even edge we stipulate to be the the same as the
orientations at the ends). Let us concentrate on an edge $e$ of $\Theta$,
that in $\widetilde\Gamma$ (before fusion) will be subdivided into several edges.
Since in $\Theta\in\calN$ the edge $e$ is oriented, we can speak of a global orientation of
$e$ (that coincides with the internal orientations of the two extremal subedges of $e$).
Now note that each subedge $e'$ of $e$ brings three portions of strands of attaching
circles of $P$ to $S(P)$, and that these strands are numbered $0,1,2$ at both ends of $e'$
according to the orientation of these. The recipe for the computation of
$\alpha(P,\omega,b)$ now uses the map $s$ of
Proposition~\ref{section:prop}, and goes at follows:
\begin{itemize}
\item Let $\eta\in\permu_3$ be the permutation attached to $e'$, and
define $(h_0,h_1,h_2)$ to be $s(\eta)$ if the internal orientation of $e'$ is consistent
with the global one, otherwise define $(h_0,h_1,h_2)$ as $s\left(\eta^{-1}\right)$;
\item At the first end of $e'$ with respect to the \emph{global} orientation,
attach to the strands $0,1,2$ the weights $h_0,h_1,h_2$.
\end{itemize}
A formal proof that summing the contributions of the various $e'$ one gets the edge contributions to $\alpha(P,\omega,b)$
as in Proposition~\ref{real:obstruction:computation:prop}
employs Proposition~\ref{section:prop}, but we confine ourselves here to some
examples only, see Fig.~\ref{section123:fig}
\begin{figure}
    \begin{center}
    \includegraphics[scale=.6]{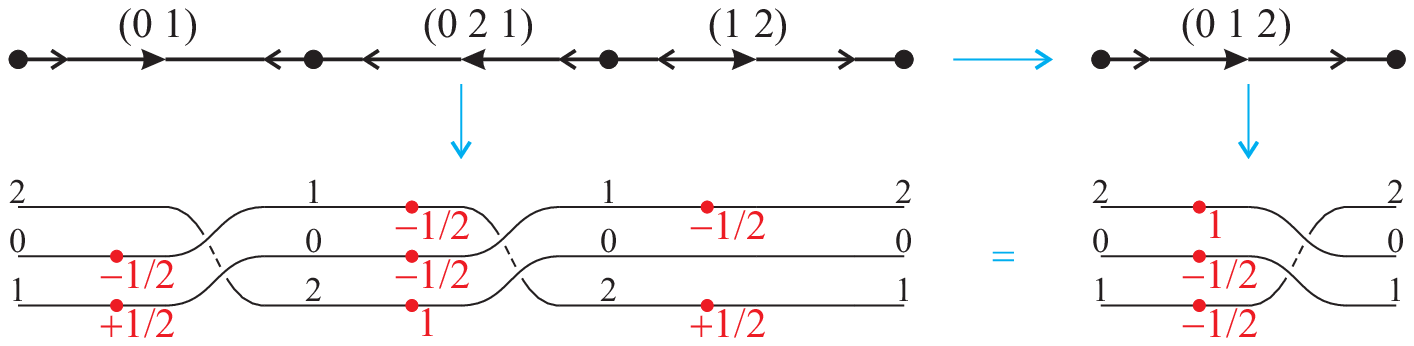}\\ \ \\
    \includegraphics[scale=.6]{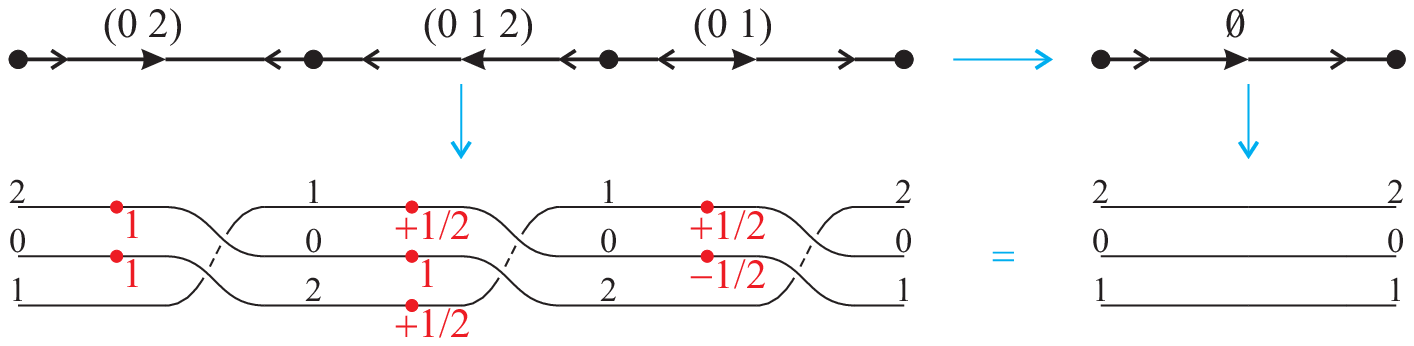}\\ \ \\
    \includegraphics[scale=.6]{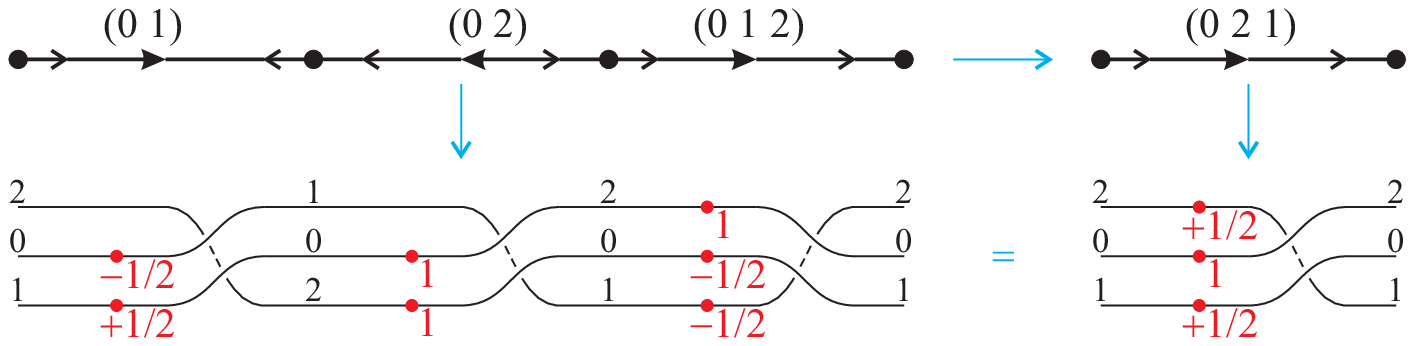}
    \end{center}
\vspace{-.5cm}\mycap{Examples of computation of $\alpha$ with the second method.\label{section123:fig}}
\end{figure}
and~\ref{section456:fig}.
\begin{figure}
    \begin{center}
    \includegraphics[scale=.6]{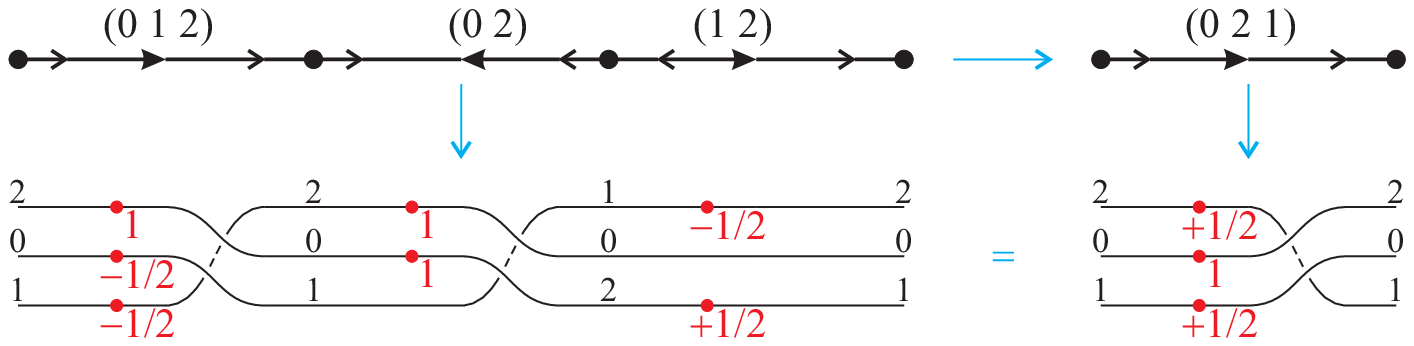}\\ \ \\
    \includegraphics[scale=.6]{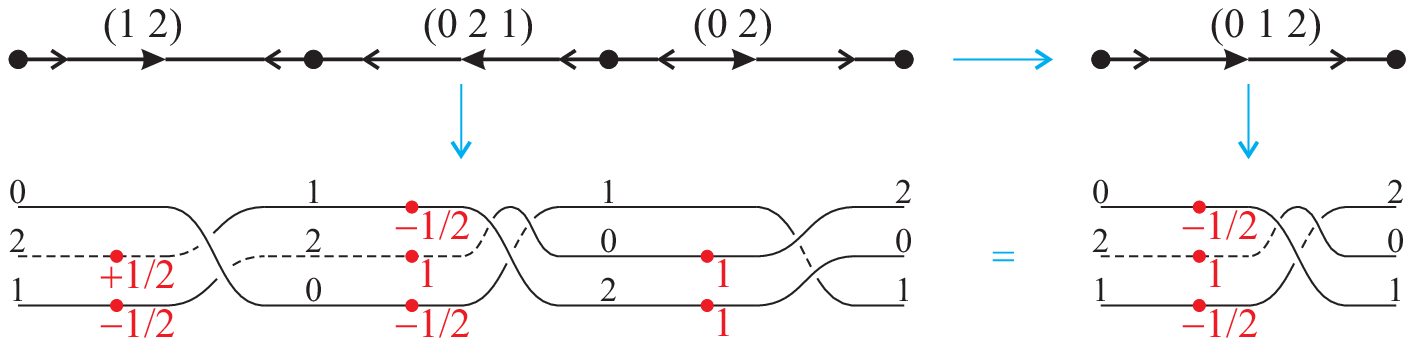}\\ \ \\
    \includegraphics[scale=.6]{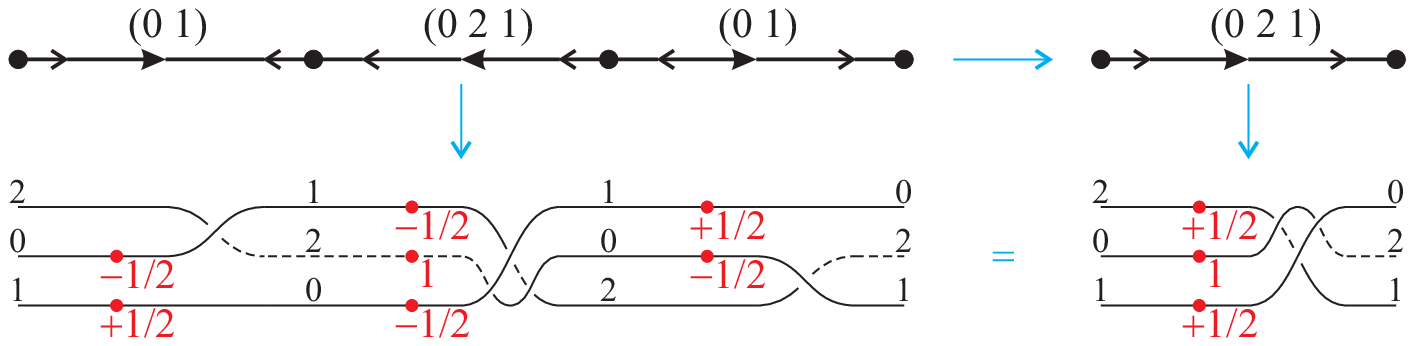}
    \end{center}
\vspace{-.5cm}\mycap{More examples of computation of $\alpha$ with the second method.\label{section456:fig}}
\end{figure}

\vspace{.5cm}

\noindent
Dipartimento di Matematica\quad
Universit\`a di Pisa\\
Largo Bruno Pontecorvo 5\quad 56127 PISA -- Italy\\
benedett@dm.unipi.it\quad
petronio@dm.unipi.it

\end{document}